\def\dint{\displaystyle\int}
\def\dsum{\displaystyle\sum}
\DeclareMathOperator{\ind}{ind}
\DeclareMathOperator{\supp}{supp}
\DeclareMathOperator{\im}{im}
\newtheorem{thm}{Theorem}
\newtheorem{lemma}{Lemma}[section]
\newtheorem{Def}{Definition}[section]
\newtheorem{prop}{Proposition}[section]
\newtheorem{cor}{Corollary}[section]
\newenvironment{cthm}[1]
  {\innercustomthm}
  {\endinnercustomthm}
\begin{document}

\title{K-theoretic Virtual Fundamental Cycle of a Global Kuranishi Chart}
\author{Dun Tang}

\begin{abstract}
In this paper, we define the virtual fundamental cycle of a global Kuranishi chart as an element in the (analytic) orbispace K-homology of the virtual orbifold and verify that it defines the same invariants as those in \cite{Abouzaid23}.
\end{abstract}

\maketitle

\tableofcontents

\section{Introduction}
\

The motivation of this paper is to define permutation-equivariant quantum K-theory \cite{Lee01}\cite{perm7}\cite{perm9} for general symplectic target spaces.
The essential ingredient in this definition is the $K$-theoretical virtual fundamental cycle of moduli spaces $\mathcal{M}$ of $J$-holomorphic curves.
There are various theories characterizing these moduli spaces.
We use almost complex global Kuranishi charts \cite{Abouzaid21}\cite{Abouzaid23} in this paper.

\begin{Def}
An almost complex global Kuranishi chart $\mathcal{A}=(\mathcal{Y},\mathcal{E},\sigma)$ of $\mathcal{M}$ consists of an orbi-section $\sigma$ of a complex orbi-bundle $\mathcal{E}$ over an almost complex orbifold $\mathcal{Y}$ with boundary, such that $\sigma^{-1}(0)=\mathcal{M} \subseteq \mathcal{Y} \setminus \partial \mathcal{Y}$.
\end{Def}

In this paper, we define $K$-theoretical virtual fundamental cycles as elements in the $K$-homology of certain spaces, instead of the $K$-cohomology of cotangent spaces as in \cite{Abouzaid21}\cite{Abouzaid23}.

\subsection{The case of manifolds}
\

Let $Y$ be a compact almost complex manifold (possibly with boundary), and let $s$ be a (continuous, not necessarily smooth) section of a complex vector bundle $E$ over $Y$ whose zero locus $M = s^{-1}(0)$ lies in the interior of $Y$.
Pick Hermitian metrics on $T^\ast Y$ and $E$.

Let $\pi: T^\ast Y \to Y$ and define
\[\begin{array}{llll}
S_+ &= \wedge^{\text{even}} T^\ast Y,& S_- &= \wedge^{\text{odd}} T^\ast Y,\\
E_+ &= \wedge^{\text{even}} E, & E_-&= \wedge^{\text{odd}} E.
\end{array}\]
Given a section $\tau: X \to F$ of a Hermitian bundle $F$ over a manifold $X$, we define $\lambda_\tau: \wedge^\ast F \to \wedge^\ast F$ by $\tau(x) \wedge \cdot +i_{\tau(x)}(\cdot)$ over points $x \in X$, where $i_{\tau(x)}$ is the contraction with $\tau(x)$ via the Hermitian metric on $F$.
Form $\lambda_{\xi}: \pi^\ast S_\pm \to \pi^\ast S_\mp$ and $\lambda_s: E_\pm \to E_\mp$ corresponding to the diagonal map $\xi: T^\ast Y \to \pi^\ast T^\ast Y$ and $s:Y \to E$.

Define 
\[\begin{array}{llll}
O_+ &= (S_+ \otimes E_+) & \oplus & (S_- \otimes E_-),\\
O_- &= (S_- \otimes E_+) & \oplus & (S_+ \otimes E_-),
\end{array}\]
and
\[\alpha = \left[\begin{matrix}
\lambda_{\xi} \otimes 1 & 1 \otimes \pi^\ast \lambda_{s}\\
1 \otimes \pi^\ast \lambda_{s} & -\lambda_{\xi} \otimes 1\end{matrix}\right]: \pi^\ast O_+ \to \pi^\ast O_-.\]

Pick a vector bundle $O'$ whose direct sum with $O_+$ is trivial, and form $(\alpha': \pi^\ast O'_+ \to \pi^\ast O'_- ) = (\alpha \oplus 1: \pi^\ast(O_+ \oplus O') \to \pi^\ast (O_- \oplus O'))$.
Since $\alpha^\ast \alpha$ is multiplication by $|\xi|^2+|s(x)|^2$, $\alpha$, and hence $\alpha'$, is a bundle isomorphism outside the compact set $s^{-1}(0)$.
Under a suitable trivialization of $O'_\pm$, $\alpha'$ is the identity map near $\partial Y$.
We can also modify $\alpha$ so that it is constant along $\{(x,\nu \xi): \nu \in \mathbb{R}_+\}$ over the entire $Y$ by \cite{Atiyah68}, pages 492–493.

Let $H_\pm = L^2(Y, O'_\pm)$.
Note that $H_{\pm}$ carry the structure of $\mathbb{C} (Y)$-modules via natural multiplication by continuous $\mathbb{C}$-valued functions on the base, which can be understood as *-representations $\phi_{\pm}: \mathbb{C}(Y) \to L(H_{\pm})$ into the algebras of bounded operators on the Hilbert spaces.
Let $T_s: H_+ \to H_-$ be the order 0 pseudo-differential operator obtained by continuously extending the quantization of $\alpha'$ from compactly supported smooth sections of $O'_\pm$ to $H_\pm$.
Let $i_!: \mathbb{C}(M) \to \mathbb{C}(Y)$ be an arbitrary continuous linear operator, such that when restricted to each point, it is a continuous extension of the corresponding function.
The datum $(H_{\pm}, \phi_{\pm} \circ i_!, T_s)$ defines an element in Kasparov's K-homology group $K_0(M)$, which is independent of $i_!$ and the global chart $(Y,E,s)$.
We denote this element as $\mathcal{O}^{vir}$ and refer to it as the K-theoretic virtual fundamental class.

In particular, if $s$ is transverse to the zero section so that $M=s^{-1}(0)$ is a stably almost complex submanifold, such an extension construction allows us to define the K-theoretic fundamental class $[M] \in K_0(\Sigma^N M^+)=K_0(M)$.

\begin{thm}
$[M] = \mathcal{O}^{vir}$.
\end{thm}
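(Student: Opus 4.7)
My plan is an excision-plus-homotopy argument reducing $\mathcal{O}^{vir}$ to the standard Thom-class representative of $[M]$ on the normal bundle. The two key ingredients are invertibility of $\alpha'$ away from $M$ and transversality, which yields a canonical bundle isomorphism $ds:\nu_{M/Y}\xrightarrow{\sim}E|_M$.

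\textbf{Step 1 (Localization).} I would exploit the freedom in choosing $i_!$ and take it to be pullback along a retraction $N\to M$ of an open tubular neighborhood, extended by zero outside. Since $T_s$ is Fredholm and $\alpha'$ is invertible off $M$, multiplication by a continuous function vanishing near $M$ is compact modulo $T_s$; hence the Kasparov module $(H_\pm,\phi_\pm\circ i_!,T_s)$ is equivalent to one supported on $N$. Identifying $N$ with the total space of $\nu:=\nu_{M/Y}$, we reduce to a chart on $\nu$ with bundle $\pi_\nu^* E|_M$ and section $s$ pulled back via a contraction.

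\textbf{Step 2 (Linearization and identification).} Next I would apply the straight-line homotopy $s_t=(1-t)s+t\,ds$ in tubular coordinates. Transversality forces $|\xi|^2+|s_t|^2>0$ off the zero section of $\nu$ for all $t\in[0,1]$, so $\alpha'_t$ is invertible off $M$ throughout and we obtain a continuous path of Kasparov modules. At $t=1$, $s_1=ds$ is the tautological section $\eta$ of $\pi_\nu^*E|_M\cong\pi_\nu^*\nu$, and the symbol $\alpha=\lambda_\xi\otimes 1+1\otimes\lambda_\eta$ on $S_\pm\otimes E_\pm$ is precisely the Atiyah--Singer symbol of the Dolbeault operator on $\nu$ twisted by the Koszul complex of $E|_M$. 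Its Kasparov class is the K-theoretic Thom class of $\nu$, which under the Thom isomorphism $K_0(\nu)\cong K_0(M)$ corresponds to $[M]$ defined via the stable almost complex structure $TM\oplus E|_M\cong TY|_M$, matching the paper's convention for $[M]\in K_0(\Sigma^N M^+)=K_0(M)$.

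\textbf{Main obstacle.} The hardest part is Step 2: since $s$ is only assumed continuous, $ds$ need not exist, so the linearizing homotopy is not literally defined. I would get around this by first invoking chart-independence of $\mathcal{O}^{vir}$ (stated as part of the construction) to homotope $(Y,E,s)$ to a chart in which $s$ is smooth and transverse, reducing to the smooth case. A secondary bookkeeping task is matching the auxiliary stabilizing bundle $O'$ with the stabilization used in the definition of $[M]$; this fits into the standard Kasparov equivalences absorbed by the suspension isomorphism $K_0(\Sigma^N M^+)=K_0(M)$.
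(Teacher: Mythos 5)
Your proposal is correct and follows essentially the same route as the paper: localize the Kasparov module to a tubular neighborhood of $M$, use transversality to identify the symbol (after the linearizing homotopy) with the Koszul--Thom symbol $i_!$ of the symbol defining $[M]$, and conclude by the analytic Thom isomorphism, which is exactly Proposition 3.3 (proved via Lemma 3.8) and the identification of $[M]$ through $K_0(\Sigma^N M^+)=K_0(M)$ that the paper cites in its own one-paragraph proof. The only difference is one of emphasis: you make explicit the linearization homotopy $s_t=(1-t)s+t\,ds$ that the paper leaves implicit, while treating as known the compatibility $\mu\circ i_!=i_\ast\circ\mu$ that the paper actually proves.
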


Given a virtual bundle $V=V_+ \ominus V_- \in K^0(M)$, we define the pairing $\left<\mathcal{O}^{vir},V\right>$ as the difference $n_+ - n_-$ of Fredholm indices:
\[\begin{array}{ll}
n_+ &= \ind (T_s \otimes 1_+: H_+\otimes_{\mathbb{C}(M)} \mathbb{C}(V_+) \to H_-\otimes_{\mathbb{C}(M)} \mathbb{C}(V_+)); \\
n_- &= \ind (T_s \otimes 1_-: H_+ \otimes_{\mathbb{C}(M)} \mathbb{C}(V_-) \to H_- \otimes_{\mathbb{C}(M)} \mathbb{C}(V_-)),
\end{array}\]
where $\mathbb{C}(V)$ is the space of complex-valued continuous sections of the bundle $V$ and has the structure of a finitely generated $\mathbb{C}(M)$-module.

In particular, if $V = i^\ast \tilde{V}$ for $i: M \to Y$ the inclusion, and $\tilde{V} \in K^0(Y)$, then we have the following theorem.

\begin{thm}
The pairing $\left<\mathcal{O}^{vir},V\right>$ equals the index of the virtual bundle
\[O \otimes \pi^\ast \tilde{V} \in K^0(T^\ast Y),\]
where $O = O_+ \ominus O_-$.
\end{thm}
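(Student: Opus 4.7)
The plan is to reduce the pairing to the Fredholm index of a PDO on $Y$ twisted by $\tilde V$ and then invoke the Atiyah--Singer index theorem. The operator $i_!$ was introduced only to factor the natural $\mathbb C(Y)$-module structure on $H_\pm$ through a $\mathbb C(M)$-action, so the Kasparov cycle $(H_\pm, \phi_\pm, T_s)$ equipped with its direct $\mathbb C(Y)$-action represents the pushforward $i_\ast \mathcal O^{vir} \in K_0(Y)$. By the projection formula in Kasparov K-theory,
\[\left<\mathcal O^{vir}, V\right> = \left<\mathcal O^{vir}, i^\ast \tilde V\right> = \left<i_\ast \mathcal O^{vir}, \tilde V\right>,\]
so it suffices to analyze the right-hand side.

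\textbf{Reduction to a twisted operator.} Represent $\tilde V_\pm$ by projections $\tilde p_\pm \in M_N(\mathbb C(Y))$; their restrictions to $M$ represent $V_\pm$. Since $\mathcal O^{vir}$ is independent of $i_!$, we may choose $i_!$ so that it sends each entry of $\tilde p_\pm|_M$ to the corresponding entry of $\tilde p_\pm$ (finitely many linear conditions on $i_!$). With this choice, the Hilbert modules $H_\pm \otimes_{\mathbb C(M)} \mathbb C(V_\pm)$ become isometric to $L^2(Y, O'_\pm \otimes \tilde V_\pm)$, and $T_s \otimes 1_\pm$ becomes the PDO on $Y$ whose symbol is $\alpha' \otimes 1_{\pi^\ast \tilde V_\pm}$. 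Consequently $n_\pm = \ind(T_s^{\tilde V_\pm})$, where $T_s^{\tilde V_\pm}$ denotes this twisted PDO.

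\textbf{Symbol class and Atiyah--Singer.} By construction $\alpha'$ is a bundle isomorphism outside the compact zero locus of $s$, so $(\pi^\ast O'_+, \pi^\ast O'_-, \alpha')$ defines a compactly supported class in $K^0(T^\ast Y)$; stabilizing by the identity on $\pi^\ast O'$ does not alter the K-theory class, so this class equals $[O]$. Tensoring with $\pi^\ast \tilde V_\pm$ yields the symbol class $[O]\cdot \pi^\ast [\tilde V_\pm]$ of $T_s^{\tilde V_\pm}$, and Atiyah--Singer identifies $\ind(T_s^{\tilde V_\pm})$ with its topological index. Taking the difference $n_+ - n_-$ gives
\[\left<\mathcal O^{vir}, V\right> = \ind\bigl(O \otimes \pi^\ast \tilde V\bigr),\]
as desired.

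\textbf{Main obstacle.} The most delicate ingredient is the Hilbert-module identification in the reduction step, because $i_!$ is only a continuous linear extension rather than a $\ast$-homomorphism, so $\phi_\pm\circ i_!$ does not literally intertwine with the $\mathbb C(Y)$-action on $L^2(Y, O'_\pm \otimes \tilde V_\pm)$. The independence of the KK-class on the choice of $i_!$ -- already invoked in the construction of $\mathcal O^{vir}$ -- is precisely what lets us pin down $i_!$ on the projections representing $\tilde V_\pm$ and thereby obtain an honest isometry; once this bridge is built, the remainder is a standard application of the twisted Atiyah--Singer index theorem.
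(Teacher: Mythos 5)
Your proposal is correct and follows essentially the same route as the paper: your ``reduction to a twisted operator'' is exactly the content of the paper's Lemma 3.4 and Proposition 3.2 (the identification $L^2(\mathcal{E})\hat{\otimes}_{\mathbb{C}(\mathcal{Y})}C(\mathcal{F})\cong L^2(\mathcal{E}\otimes\mathcal{F})$ together with $\sigma(P\otimes 1)=\sigma(P)\otimes 1$), and your final index step is Proposition 3.4, which the paper combines into Proposition 3.5 to deduce the theorem. The only cosmetic difference is that you work with the introduction's $\mathbb{C}(M)$-module formulation and must therefore pin down $i_!$ on the entries of the representing idempotents (which requires the mild extra care that those restricted entries be chosen linearly independent, so that prescribing $i_!$ on them is consistent), whereas the body of the paper sidesteps $i_!$ entirely by pairing $\mathcal{O}^{vir}\in K_0(\mathcal{Y})$ with $\tilde{V}$ directly on $\mathcal{Y}$.
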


This theorem shows that our definition of invariants agrees with that in \cite{Abouzaid23}.

\subsection{Results}
\

In this paper, we define a $K$-theoretic virtual fundamental class $\mathcal{O}^{vir}$ of an almost-complex global Kuranishi chart $(\mathcal{Y},\mathcal{E},\sigma)$.
The $K$-theoretic virtual fundamental class is an element in $K_0(|I\mathcal{M}|)$, the (complex-coefficient) $K$-homology of the coarse space $|I\mathcal{M}|$ of the inertia orbi{\em space} of $\mathcal{M}$. 
We also define the pairing of $\mathcal{O}^{vir}$ with elements in the (orbispace) $K$-group of $\mathcal{M}$.

Sector-wise, $\mathcal{O}^{vir}$ is constructed from $(|\mathcal{Y}^{(g)}|, str_{(g)} \mathcal{E}|_{\mathcal{Y}^{(g)}}, str_{(g)} \circ \sigma|_{\mathcal{Y}^{(g)}})$ as in the no-isotropy case.

The pairing is the cap product composed with a certain map $p_\ast: K_0(|I\mathcal{Y}|) \to \mathbb{C}$.
\begin{itemize}
\item
The cap product of $(T: H_+ \to H_-) \in K_0(|I\mathcal{M}|)$ and $(\alpha: V_+ \to V_-) \in K^0(|I\mathcal{M}|)$ is
\[\left[\begin{matrix}T \otimes 1 & 1 \otimes C(\alpha^\ast) \\ 1 \otimes C(\alpha) & - T^\ast \otimes 1 \end{matrix}\right]: \begin{matrix} H_+ \otimes C(V_+) &&  H_- \otimes C(V_+) \\ \oplus & \to & \oplus \\ H_- \otimes C(V_-) &&  H_+ \otimes C(V_-)\end{matrix}.\]
Here, in these representatives, $H_\pm$ are Hilbert spaces, $V_\pm$ are Hermitian bundles, $C(V_\pm)$ are continuous sections of $V_\pm$, and $C(\alpha): C(V_+) \to C(V_-)$ is the induced map.
\item
For orbi{\em folds}, the map $p_\ast$ is an extension of the Fredholm index on $K$-homology classes represented by elliptic operators.
The precise definition is given in Section 3.5 via inclusion-exclusion and Kawasaki's index theorem \cite{Kawasaki81}.
Its definition is extended to virtual orbifolds in Section 4.2.
\end{itemize}
We prove the following theorems.

\begin{cthm}{1} (Orbifold version)
If $\sigma$ is transverse to the zero section, so that $\mathcal{M}=\sigma^{-1}(0)$ is a stably almost complex sub-orbifold, such an extension construction allows us to define the $K$-theoretic fundamental class $[\mathcal{M}] \in \oplus_{(g)} K_0(\Sigma^N |\mathcal{M}^{(g)}|^+)=K_0(\mathcal{M})$.
Then \[[\mathcal{M}] = \mathcal{O}^{vir}.\]
\end{cthm}

\begin{cthm}{2} (Orbifold version)
Let $i: \mathcal{M} = \sigma^{-1}(0) \to \mathcal{Y}$ be the inclusion.
Given a virtual bundle $\mathcal{V} \in K^0(\mathcal{Y})$, the pairing of $\mathcal{O}^{vir}$ with $i^\ast \mathcal{V}$ equals the invariants defined by $\mathcal{V}$ in \cite{Abouzaid23}.
\end{cthm}

\section*{Acknowledgments}
\

The author thanks Professor Alexander Givental for drawing his attention to quantum K-theory for symplectic manifolds, suggesting related problems, and providing patience and guidance throughout the process.

\section{Orbifold K-theory}
\

Let $\mathcal{Y}=Y/G$ be a compact orbifold with boundary (unless otherwise mentioned), which is the quotient of a compact manifold $Y$ by a compact Lie group $G$ (\cite{Pardon20} Corollary 1.3).
We use cursive letters for objects on orbifolds and printed-style letters for $G$-equivariant and $G$-invariant objects on manifolds.
We use $\varepsilon$ for the grading operator on various $\mathbb{Z}_2$-graded objects, i.e., the operator that acts as $(-1)^a$ on the subspace of grading $a$.
We use $\mathbb{C}(\mathcal{Y})$ for the $\mathbb{C}^\ast$-algebra of complex-valued continuous functions (under the $\sup$-norm) on $\mathcal{Y}$.

\subsection{(Compactly supported) K-group}
\

We recall the relative K-group $K^0(\mathcal{Y},\partial \mathcal{Y})$ of the orbi{\em space} $\mathcal{Y}=Y/G$.
Note that in this paper the orbispace is either an orbifold (the base space of the global Kuranishi chart) or a virtual orbifold (the zero locus of the section in the global Kuranishi chart).
In the latter case, $\partial \mathcal{Y} = \emptyset$.

For an equivariant bundle map $\alpha: E_+ \to E_-$ between complex $G$-bundles over $Y$, we define the support of $\alpha$ as the (closed) set of points in $Y$ where $\alpha$ is not an isomorphism of the representations of the isotropy groups.
The set $\mathcal{C}_G(Y,\partial{Y})$ of all bundle maps $\alpha$ supported away from $\partial Y$ is a semigroup under direct sum (defined as $\alpha^0 \oplus \alpha^1: E^0_+ \oplus E^1_+ \to E^0_- \oplus E^1_-$ for $\alpha^i: E^i_+ \to E^i_-$).
The zero element is the identity map between 0-dimensional trivial bundles over $Y$.

An element $\alpha: E_+ \to E_-$ in $\mathcal{C}_G(Y,\partial{Y})$ is called degenerate if it has empty support.
Two elements $\alpha^0: E^0_+ \to E^0_-$ and $\alpha^1: E^1_+ \to E^1_-$ in $\mathcal{C}_G(Y,\partial{Y})$ are homotopic if there exists an element $\tilde{\alpha}: \tilde{E}_+ \to \tilde{E}_-$ in $C_G(Y \times [0,1], (\partial Y) \times [0,1])$ that restricts to $\alpha^0$ and $\alpha^1$ at $Y \times \{0\}$ and $Y \times \{1\}$, respectively.
Note that the pair $(Y \times [0,1], (\partial Y) \times [0,1])$ equivariantly retracts to $(Y,\partial Y)$; thus we can take $\tilde{E}_\pm$ to be the pull-back of a certain bundle over $(Y,\partial Y)$, i.e., independent of $t \in [0,1]$.

We define an equivalence relation on $\mathcal{C}_G(Y,\partial{Y})$ generated by identifying degenerate elements with $0$ and by identifying homotopic elements.
We define the $G$-equivariant $K$-group $K_G^0(Y,\partial{Y})$ of $Y$ as the set of equivalence classes of this relation under direct sum.

In the literature, the $K$-group $K_G^0(Y, \partial Y)$ is sometimes defined as follows.
We define the absolute $K$-groups $K^0_G$ of a space as the Grothendieck group associated to the semigroup (under direct sum) of isomorphism classes of finite-dimensional complex $G$-bundles over that space.
A map between spaces induces a pull-back of bundles, which is a homomorphism between the corresponding absolute $K$-groups.
We define the reduced $K$-group $\tilde{K}_G^0(Y/\partial Y, pt)$ as the kernel of the pull-back map $K^0_G(Y/\partial Y) \to K^0_G(pt)$ induced by the inclusion $pt \to Y/\partial Y$ of the base point, and define $K_G^0(Y, \partial Y) = \tilde{K}_G^0(Y/\partial Y, pt)$.

Note that, by definition, each element in $\tilde{K}_G^0(Y/\partial Y, pt)$ is represented by an element for which $E_+|_U = E_-|_U = U \times_G V$, for some representation $V$ of $G$, on a $G$-invariant neighborhood $U$ of $\partial Y$.
This definition coincides with our first definition using compactly supported bundle homomorphisms (\cite{Segal68} Proposition 3.1, see also \cite{Atiyah68} page 490).
An isomorphism is explicitly constructed as follows.
Given $E_+ \ominus E_- \in \tilde{K}_G^0(Y/\partial Y, pt)$, let $\alpha: E_+ \to E_-$ be an arbitrary map that restricts to the identity on $U$.
The pair $(E_\pm, \alpha)$ represents an element in $K^0_G(Y,\partial Y)$.
Conversely, given a bundle map $\alpha: E_+ \to E_-$, we pick a bundle $E$ over $Y$, and a $G$-invariant neighborhood $U$ of $\partial Y$, such that $(E_+ \oplus E)|_U$ is of the form $U \times_G V$.
Then $\alpha \oplus id: E_+ \oplus E \to E_- \oplus E$ represents an element in $\tilde{K}_G^0(Y/\partial Y, pt)$.

For simplicity, we define $K^0_G(Y,\partial{Y})$ as pairs $(E, \alpha)$ where $E$ is a $\mathbb{Z}_2$-graded Hermitian $G$-bundle and $\alpha: E \to E$ is a self-adjoint, degree $1$, compactly supported bundle automorphism, modulo degenerate elements and homotopy.
Indeed, $(E_+, E_-, \alpha_+)$ in the first definition corresponds to $E = E_+ \oplus E_-$ and the map
\[\alpha = \left[\begin{matrix} 0 & \alpha_+^\ast \\ \alpha_+ & 0\end{matrix}\right]\]
Here, we identify $(E_\pm)^\ast$ with $E_\pm$ by choosing a Hermitian metric on $E_\pm$.
Different choices of Hermitian metrics on $E_\pm$ yield homotopic representatives of the $K$-class, and hence the same element in the $K$-group.

The orbispace K group is defined as $K^0(Y/G,\partial{Y}/G) = K_G^0(Y,\partial{Y})$.

\subsection{Twisted sectors, inertia orbispace, and fake K-group}
\

Given a quotient orbispace $\mathcal{Y}=Y/G$, we define the inertia orbispace
\[I\mathcal{Y}=\left(\coprod_{g \in G} Y^g \times \{g\}\right)/G.\]
Here $h\in G$ acts on $(x,g)$ as $h \circ (x,g)=(hx,hgh^{-1})$, and $Y^g$ is the $g$-fixed locus in $Y$.
We define $\mathcal{Y}^{(g)}$ (or simply $\mathcal{Y}^g$, when no confusion arises) as the component in $I\mathcal{Y}$ labeled by the conjugacy class $(g)$.
We call the $\mathcal{Y}^{(g)}$ the sectors, and those with $g \neq 1$ the twisted sectors.

More generally, given $\mathbf{g}=(g_1,\cdots,g_k) \in G^k$, we use $Y^{\mathbf{g}}$ for the locus fixed by $g_1, \cdots, g_k$.
We define 
\[I^k\mathcal{Y} = \left(\coprod_{\mathbf{g}} \mathcal{Y}^\mathbf{g} \times \{\mathbf{g}\}\right)/G,\]
where $h\in G$ acts on $(x,\mathbf{g})$ by $h \circ (x,\mathbf{g})=(hx,h\mathbf{g}h^{-1})$.
Define $\mathcal{Y}^{(\mathbf{g})}$ (or $\mathcal{Y}^{\mathbf{g}}$ when no confusion arises) as the sector in $I^k\mathcal{Y}$ corresponding to the conjugacy class $(\mathbf{g})$.
We call $\mathcal{Y}^{(\mathbf{g})}$ a $k$-multi-sector.

By `fake' theories of orbispaces, we mean theories in which we only remember the topological space structure on $\mathcal{Y}$ (and forget the isotropy groups at points in $\mathcal{Y}$).
As an example, define the fake orbispace K-group $K_\mathbb{Z}^{0,fake}(\mathcal{Y}, \partial \mathcal{Y})$ as classes represented by $(\alpha = \alpha^\ast: \mathcal{E} \to \mathcal{E}) \in K^0(\mathcal{Y}, \partial \mathcal{Y})$, with the isotropy groups at points in $\mathcal{Y}$ acting trivially on the fibers of $\mathcal{E}$.
Define 
\[K^{0,fake}(\mathcal{Y}, \partial \mathcal{Y}) = K_\mathbb{Z}^{0,fake}(\mathcal{Y}, \partial \mathcal{Y}) \otimes \mathbb{C}.\]
Note that the super-trace of a $global$ isotropy $(g) \subseteq G$ defines a map 
\[K^0(\mathcal{Y}, \partial \mathcal{Y}) \otimes \mathbb{C} \to K^{0,fake}(\mathcal{Y}, \partial \mathcal{Y}).\]
Note also that by the eigen-bundle decomposition, there is an isomorphism 
\[K^0(\mathcal{Y}, \partial \mathcal{Y}) \otimes \mathbb{C} \to K^{0,fake}(I\mathcal{Y}, \partial I \mathcal{Y}),\]
given by sending $\alpha$ to the element with $str_{(g)}\, i_{(g)}^\ast \alpha$ in the twisted sector labeled by $(g)$, where $i_{(g)}: \mathcal{Y}^{(g)} \to \mathcal{Y}$ is the inclusion.

\subsection{K-homology}
\

We consider Kasparov $\mathbb{C}(\mathcal{Y})$-modules $(H,\phi,T)$, where by definition $H$ is a $\mathbb{Z}_2$-graded complex Hilbert space and $T$ is a degree $1$ self-adjoint automorphism of $H$. The map $\phi: \mathbb{C}(\mathcal{Y}) \to L(H)$ is a *-homomorphism from $\mathbb{C}(\mathcal{Y})$ to the space $L(H)$ of bounded linear maps on $H$, such that each $\phi(f)$ is of degree $0$ and almost commutes with $T$ in the sense that $[T,\phi(f)]$ is compact in $L(H)$ \cite{Kasparov75}. The set of Kasparov $\mathbb{C}(\mathcal{Y})$-modules is an abelian semi-group under direct sum: $(H,\phi,T) \oplus (H',\phi',T') = (H \oplus H',\phi \oplus \phi',T \oplus T')$, with the zero element defined by $H=0$, $\phi(f)=0$, and $T=0$.

We say that $(H,\phi,T)$ is unitarily equivalent to $(H',\phi',T')$ if there is a degree $0$ unitary map $U: H \to H'$ that takes $(\phi, T)$ to $(\phi', T')$. We say that $(H,\phi,T_0)$ is homotopic to $(H,\phi,T_1)$ if there exists a continuous family of operators $\{T_t\}_{t\in[0,1]}$ connecting $T_0$ and $T_1$ such that $(H,\phi,T_t)$ is a Kasparov module for all $t\in[0,1]$. We say that $(H,\phi,T)$ is degenerate if $T$ is invertible and commutes with $\phi(f)$ for all $f \in \mathbb{C}(\mathcal{Y})$.

We identify Kasparov modules that differ by a degenerate Kasparov $\mathbb{C}(\mathcal{Y})$-module and also identify unitarily equivalent and homotopic Kasparov modules to obtain the fake $K$-homology (semi-)group $K^{fake}_0(\mathcal{Y})$.
Indeed, $K^{fake}_0(\mathcal{Y})$ is a group \cite{Kasparov75}. The inverse of $(H, \phi, T)$ is $(H',\phi', -T)$, where $H'$ is $H$ with the grading shifted by $1$, and $\phi'=\phi\varepsilon$ with $\varepsilon$ acting as $(-1)^{a}$ on components of grading $a$ (Proposition 1 in \cite{Kasparov75}).

We define $K_0(\mathcal{Y})=K^{fake}_0(I \mathcal{Y})$. Note that a map $f: \mathcal{Y} \to \mathcal{Y}'$ induces 
\[\begin{array}{lll}
f_\ast:& K^{fake}_0(\mathcal{Y}) &\to K^{fake}_0(\mathcal{Y}'),\\
& (H, \phi, T) &\to (H, \phi \circ f^\ast, T).
\end{array}\]

\subsection{Embedding relatively compact sets into compact orbifolds}
\

Starting from this subsection, {\bf we assume $\mathcal{Y}$ is an orbi{\em fold}}. In this subsection, we prove a technical proposition.

\begin{prop}
Any compact subset $\mathcal{U} \subseteq \mathcal{Y} \setminus \partial \mathcal{Y}$ in a Riemannian orbifold $\mathcal{Y}$ has a neighborhood that embeds openly into a closed orbifold $\mathcal{W}$.
\end{prop}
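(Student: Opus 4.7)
The plan is to construct $\mathcal{W}$ as the double of $\mathcal{Y}$ along its boundary. Since the interior $\mathcal{Y} \setminus \partial \mathcal{Y}$ is itself an open neighborhood of $\mathcal{U}$, it suffices to exhibit a closed orbifold $\mathcal{W}$ into which $\mathcal{Y} \setminus \partial \mathcal{Y}$ embeds as an open suborbispace.

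I would work upstairs using the presentation $\mathcal{Y} = Y/G$ from the beginning of Section 2, taking a $G$-invariant Riemannian metric on $Y$ corresponding to the Riemannian structure on $\mathcal{Y}$. Since $G$ preserves $\partial Y$, the submanifold $\partial Y$ is $G$-invariant and the inward unit normal vector field is $G$-equivariant; its exponential flow therefore produces a $G$-equivariant collar embedding $\partial Y \times [0,\epsilon) \hookrightarrow Y$.

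With the equivariant collar in hand, I would form $W := Y \sqcup_{\partial Y} Y$ by gluing two copies of $Y$ along $\partial Y$ via the identity. The two collars fit together into a $G$-invariant open neighborhood $\partial Y \times (-\epsilon, \epsilon)$ inside $W$, through which $W$ inherits a smooth structure on which the induced diagonal $G$-action is smooth. Then $\mathcal{W} := W/G$ is a closed orbifold, and the inclusion of either half yields an open embedding $\mathcal{Y} \setminus \partial \mathcal{Y} \hookrightarrow \mathcal{W}$, which already contains $\mathcal{U}$ together with a neighborhood of it.

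The main obstacle I anticipate is the equivariance of the collar. Standard differential topology produces collars for manifolds with boundary by local constructions that typically break symmetry, so one must assemble the collar from an intrinsically $G$-invariant datum. Using the exponential map of a $G$-invariant metric circumvents the need to treat different isotropy strata of $\partial Y$ separately; this is the step where the hypothesis that $\mathcal{Y}$ is Riemannian is essential. A secondary verification is that the smooth structure on $W$ across the seam $\partial Y$ is well-defined and $G$-invariant; this is handled by the usual trick of identifying the two collars via $t \mapsto -t$ on the normal coordinate, exactly as in the ordinary (non-orbifold) doubling construction.
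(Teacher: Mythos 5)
Your construction is correct in the setting where $\mathcal{Y}=Y/G$ is compact (the standing assumption of Section 2), and the doubling step at the end is the same as the paper's; but you double a different space, and this is where the two arguments genuinely diverge. The paper first proves a lemma producing a \emph{compact} codimension-$0$ sub-orbifold $\mathcal{X}$ with $\mathcal{U}\subseteq\mathrm{int}\,\mathcal{X}\subseteq\mathcal{X}\subseteq\mathcal{Y}\setminus\partial\mathcal{Y}$, built by covering $\mathcal{U}$ with finitely many geodesic balls and pushing a codimension-$1$ orbifold off these balls, with $\partial\mathcal{X}$ perturbed to be transverse to the twisted strata; it then doubles $\mathcal{X}$ along $\partial\mathcal{X}$. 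You instead double $\mathcal{Y}$ itself along $\partial\mathcal{Y}$. What your route buys is brevity: no hypersurface surgery, no transversality perturbation, and the collar is produced cleanly and equivariantly by the normal exponential flow of an invariant metric (which also makes the fixed loci $Y^{g}$ meet $\partial Y$ orthogonally, so the transversality the paper arranges by hand holds automatically at $\partial\mathcal{Y}$). What the paper's route buys is that it only uses compactness of $\mathcal{U}$, not of $\mathcal{Y}$: the repeated appeals to ``finitely many geodesic balls covering $\mathcal{U}$'' and to $\mathcal{U}$ being \emph{relatively} compact indicate the proof is meant to apply when $\mathcal{Y}$ is non-compact, in which case your double $Y\sqcup_{\partial Y}Y$ is non-compact and hence not closed, and your argument does not go through. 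It also produces a $\mathcal{W}$ localized near $\mathcal{U}$, which is convenient downstream (in Section 2.5 one only needs to carry the bundle $E$ and the symbol $p$, trivial outside $\supp p$, over to $\mathcal{W}$). If you intend your proof to stand on its own, you should state explicitly that you are invoking compactness of $Y$ (and of $\partial Y$, to get a uniform collar width $\epsilon$), and either restrict the proposition to that case or supply the localization step for general $\mathcal{Y}$.
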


We first prove the following lemma.

\begin{lemma}
The subset $\mathcal{U}$ is contained in the interior of a compact sub-orbifold $\mathcal{X} \subseteq \mathcal{Y} \setminus \partial \mathcal{Y}$ whose boundary $\partial \mathcal{X}$ is (an orbifold) transverse to twisted strata.
\end{lemma}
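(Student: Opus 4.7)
The plan is to construct a smooth $G$-invariant function on $Y$ that separates $\mathcal{U}$ from $\partial Y$, and then to pick, via Sard's theorem, a regular value that is simultaneously regular on every orbit-type stratum; the corresponding sublevel set will be the desired $\mathcal{X}$.

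Write $\mathcal{Y}=Y/G$ with $G$ a compact Lie group, and let $\tilde{\mathcal{U}} \subseteq Y \setminus \partial Y$ be the $G$-invariant compact preimage of $\mathcal{U}$. I first produce a continuous $G$-invariant function $\tilde f: Y \to \mathbb{R}$ with $\tilde f \le -1$ on a $G$-invariant neighborhood of $\tilde{\mathcal{U}}$ and $\tilde f \ge 1$ on a $G$-invariant neighborhood of $\partial Y$, using Urysohn's lemma followed by averaging over the Haar measure on $G$. Smoothing $\tilde f$ against a $G$-equivariant mollifier built from a slice cover (equivalently, a smooth partition of unity followed by Haar averaging) promotes it to a smooth $G$-invariant function $f$ with the same separating properties after a small shrinkage of the constants.

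Next, I invoke the principal orbit type theorem: the $G$-action on the compact manifold-with-boundary $Y$ has only finitely many orbit types, so the isotropy stratification $\{Y^{(H)}\}_{(H)}$, indexed by conjugacy classes of closed subgroups $H \le G$, is finite. Each fixed locus $Y^g$ is a finite union of such smooth strata, and consequently so is the preimage of each twisted sector $\mathcal{Y}^{(g)}$ (and of each multi-sector, if one wants $\partial \mathcal{X}$ to be transverse to the whole inertia tower). Applying Sard's theorem to the smooth function $f|_{Y^{(H)}}$ for every $(H)$ gives a measure-zero set of critical values on each stratum; taking the finite union still leaves a dense set of common regular values in $(-1,1)$. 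Pick any such regular value $c$.

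Setting $\mathcal{X} = \{f \le c\}/G$, we obtain a compact sub-orbifold of $\mathcal{Y} \setminus \partial \mathcal{Y}$ containing $\mathcal{U}$ in its interior, with smooth boundary $\partial \mathcal{X} = \{f = c\}/G$ meeting every twisted stratum $\mathcal{Y}^{(g)}$ transversely inside that stratum, which is precisely the required transversality. The main technical point is the construction of $f$, specifically the $G$-equivariant smoothing near $\partial Y$ where one must respect possibly non-free isotropy; once $f$ is in hand, the finiteness of the orbit-type stratification makes the Sard step immediate.
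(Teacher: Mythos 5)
Your argument is correct, but it is a genuinely different construction from the paper's. The paper covers $\mathcal{U}$ by finitely many geodesic balls $B(x_i,r_i)$, starts from an arbitrary compact codimension-$1$ orbifold, and iteratively ``pushes'' it out of each ball along radial geodesics inside $B(x_i,2r_i)$ until the resulting hypersurface bounds a compact region containing all the balls; transversality to the twisted strata is then arranged by a separate perturbation at the end. You instead build a smooth $G$-invariant function $f$ on $Y$ separating the preimage of $\mathcal{U}$ from $\partial Y$ and take $\mathcal{X}=\{f\le c\}/G$ for a value $c$ that is simultaneously regular on every isotropy stratum, which exists by Sard's theorem and the finiteness of orbit types; this yields the sub-orbifold structure and the transversality of $\partial\mathcal{X}$ to the twisted strata in one stroke (regularity of $c$ for $f|_{Y^{(H)}}$ at a point of $\{f=c\}$ is exactly transversality of $\{f=c\}$ to that stratum, and regularity on all strata implies regularity of $c$ for $f$ itself since the strata cover $Y$). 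Your route is the more standard one and avoids the somewhat informal termination and bounding claims in the paper's pushing argument. Two small imprecisions worth fixing: the fixed locus $Y^g$ is a union of \emph{connected components} of orbit-type strata rather than of whole strata (it is in any case a smooth compact submanifold, being the fixed-point set of the compact group $\overline{\langle g\rangle}$, so you may just as well apply Sard to $f|_{Y^g}$ for the finitely many relevant conjugacy classes $(g)$); and you should note explicitly that $G$-invariance of $f$ makes $\{f=c\}$ $G$-invariant, so that transversality upstairs descends to orbifold transversality in $\mathcal{Y}$. Neither point affects the validity of the proof.
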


\begin{proof}
Assume without loss of generality that $\mathcal{U}$ is connected. By the compactness of $\mathcal{U}$, there are finitely many geodesic balls $B(x_i,r_i)$ that cover $\mathcal{U}$, such that the exponential map at each $x_i$ is a diffeomorphism between $B(x_i,2r_i)$ and its pre-image. We start with an arbitrary compact codimension-$1$ orbifold without boundary that meets some $B(x_i,r_i)$ and modify this orbifold on all the $B(x_i,r_i)$'s to obtain a codimension-$1$ orbifold that bounds a compact region $\mathcal{X}$ which contains all the $B(x_i,r_i)$.

\tikzset{every picture/.style={line width=0.75pt}}
\begin{tikzpicture}[x=0.75pt,y=0.75pt,yscale=-1,xscale=1]
\draw  [dash pattern={on 0.84pt off 2.51pt}] (32,81) .. controls (32,40.13) and (65.13,7) .. (106,7) .. controls (146.87,7) and (180,40.13) .. (180,81) .. controls (180,121.87) and (146.87,155) .. (106,155) .. controls (65.13,155) and (32,121.87) .. (32,81) -- cycle ;
\draw  [dash pattern={on 4.5pt off 4.5pt}] (68.08,81) .. controls (68.08,60.06) and (85.06,43.08) .. (106,43.08) .. controls (126.94,43.08) and (143.92,60.06) .. (143.92,81) .. controls (143.92,101.94) and (126.94,118.92) .. (106,118.92) .. controls (85.06,118.92) and (68.08,101.94) .. (68.08,81) -- cycle ;
\draw [color={rgb, 255:red, 0; green, 0; blue, 0 }  ,draw opacity=1 ]   (32,81) .. controls (67.33,80) and (59.33,60) .. (107.33,60) ;
\draw [color={rgb, 255:red, 0; green, 0; blue, 0 }  ,draw opacity=1 ]   (107.33,60) .. controls (160.33,60) and (145.33,79) .. (180,81) ;
\draw [color={rgb, 255:red, 208; green, 2; blue, 27 }  ,draw opacity=1 ]   (32,81) .. controls (67.33,80) and (54.33,29) .. (102.33,29) ;
\draw [color={rgb, 255:red, 208; green, 2; blue, 27 }  ,draw opacity=1 ]   (102.33,29) .. controls (155.33,29) and (145.33,79) .. (180,81) ;
\draw    (32,81) .. controls (-10.4,80.49) and (-16.67,179) .. (107.33,179) .. controls (231.33,179) and (218.63,80.51) .. (180,81) ;
\end{tikzpicture}

In each step of the modification, we pick a geodesic ball $B(x_i,r_i)$ that meets the codimension-$1$ orbifold and “push” the sub-orbifold (black) out (red) of this ball within $B(x_i,2r_i)$, while fixing points near the boundary of $B(x_i,2r_i)$ (by pushing or pulling along the radii of the corresponding geodesics emerging from the $x_i$'s). We apply this construction until the resulting codimension-$1$ orbifold $\partial \mathcal{X}$ does not meet any $B(x_i,r_i)$. In this case, $\partial \mathcal{X}$ bounds a compact region $\mathcal{X}$ that contains all the $B(x_i,r_i)$, and hence contains $\mathcal{U}$.

Moreover, since $\mathcal{U}$ is relatively compact, there are only finitely many twisted strata. Under the assumption that the space $\mathcal{Y}$ is connected, each stratum is either the entire $\mathcal{Y}$ or a sub-orbifold of strictly lower dimension. We can make $\partial \mathcal{X}$ transverse to these strata by a perturbation.
\end{proof}

\begin{proof}
(Proof of Proposition 2.1)

\tikzset{every picture/.style={line width=0.75pt}}
\begin{tikzpicture}[x=0.75pt,y=0.75pt,yscale=-1,xscale=1]
\draw    (296.33,87) .. controls (369.33,86) and (327.33,91) .. (404.33,103) .. controls (481.33,115) and (479.33,23) .. (404.33,38) .. controls (329.33,53) and (369.33,58) .. (296.33,59) ;
\draw    (255.33,87) .. controls (183.33,88) and (214.33,99) .. (145.33,107) .. controls (76.33,115) and (68.7,19.34) .. (145.33,33) .. controls (221.97,46.66) and (182.33,58) .. (255.33,59) ;
\draw   (248.33,73) .. controls (248.33,65.27) and (251.47,59) .. (255.33,59) .. controls (259.2,59) and (262.33,65.27) .. (262.33,73) .. controls (262.33,80.73) and (259.2,87) .. (255.33,87) .. controls (251.47,87) and (248.33,80.73) .. (248.33,73) -- cycle ;
\draw   (289.33,73) .. controls (289.33,65.27) and (292.47,59) .. (296.33,59) .. controls (300.2,59) and (303.33,65.27) .. (303.33,73) .. controls (303.33,80.73) and (300.2,87) .. (296.33,87) .. controls (292.47,87) and (289.33,80.73) .. (289.33,73) -- cycle ;
\draw   (225.33,73) .. controls (225.33,65.27) and (228.47,59) .. (232.33,59) .. controls (236.2,59) and (239.33,65.27) .. (239.33,73) .. controls (239.33,80.73) and (236.2,87) .. (232.33,87) .. controls (228.47,87) and (225.33,80.73) .. (225.33,73) -- cycle ;
\draw   (313.33,73) .. controls (313.33,65.27) and (316.47,59) .. (320.33,59) .. controls (324.2,59) and (327.33,65.27) .. (327.33,73) .. controls (327.33,80.73) and (324.2,87) .. (320.33,87) .. controls (316.47,87) and (313.33,80.73) .. (313.33,73) -- cycle ;
\draw    (8,152) -- (70.33,152) ;
\draw [shift={(72.33,152)}, rotate = 180] [color={rgb, 255:red, 0; green, 0; blue, 0 }  ][line width=0.75]    (10.93,-3.29) .. controls (6.95,-1.4) and (3.31,-0.3) .. (0,0) .. controls (3.31,0.3) and (6.95,1.4) .. (10.93,3.29)   ;
\draw    (242.33,176) .. controls (188.33,176) and (211.33,191) .. (142.33,199) .. controls (73.33,207) and (65.7,111.34) .. (142.33,125) .. controls (218.97,138.66) and (187.33,151) .. (241.33,151) ;
\draw    (242.33,176) .. controls (292.33,174) and (260.33,182) .. (337.33,194) .. controls (414.33,206) and (412.33,114) .. (337.33,129) .. controls (262.33,144) and (290.33,149) .. (241.33,151) ;
\draw   (221.33,163) .. controls (221.33,155.82) and (224.24,150) .. (227.83,150) .. controls (231.42,150) and (234.33,155.82) .. (234.33,163) .. controls (234.33,170.18) and (231.42,176) .. (227.83,176) .. controls (224.24,176) and (221.33,170.18) .. (221.33,163) -- cycle ;
\draw   (253.33,163) .. controls (253.33,155.82) and (256.24,150) .. (259.83,150) .. controls (263.42,150) and (266.33,155.82) .. (266.33,163) .. controls (266.33,170.18) and (263.42,176) .. (259.83,176) .. controls (256.24,176) and (253.33,170.18) .. (253.33,163) -- cycle ;
\draw (270,64.4) node [anchor=north west][inner sep=0.75pt]    {$+$};
\draw (116,152.4) node [anchor=north west][inner sep=0.75pt]    {$\mathcal{X}$};
\draw (352,148.4) node [anchor=north west][inner sep=0.75pt]    {$\mathcal{X} '$};
\draw (109,52.4) node [anchor=north west][inner sep=0.75pt]    {$\mathcal{X}$};
\draw (424,59.4) node [anchor=north west][inner sep=0.75pt]    {$\mathcal{X} '$};
\end{tikzpicture}

There is a collar neighborhood $\varphi: \partial \mathcal{X} \times (0,1) \to \mathcal{X}$ of $\partial \mathcal{X}$ in $\mathcal{X}$. We take two copies $\mathcal{X} = \mathcal{X}'$, with collar neighborhoods $\varphi$ and $\varphi'$, and define $\mathcal{W}$ as the “doubling”
\[(\mathcal{X} \coprod \mathcal{X}')/(\varphi(x,t) \sim \varphi'(x,1-t)).\]
Then $\mathcal{W}$ is closed, and there is an embedding $\mathcal{U} \hookrightarrow \mathcal{X} \hookrightarrow \mathcal{W}$.
\end{proof}

\begin{cor}
Any compact subset $U \subseteq Y \setminus \partial Y$ in a Riemannian $G$-manifold $Y$ has a $G$-invariant neighborhood that equivariantly open embeds into a closed, almost free $G$-manifold $W$.
\end{cor}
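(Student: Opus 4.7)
The plan is to lift the construction of Proposition 2.1 equivariantly to $Y$. First, since $G$ is compact the saturation $GU$ is compact and $G$-invariant, so I may replace $U$ by $GU$ and assume from the outset that $U$ is $G$-invariant. Let $\pi: Y \to \mathcal{Y} = Y/G$ denote the quotient map; the $G$-invariant Riemannian metric on $Y$ descends to make $\mathcal{Y}$ a Riemannian orbifold, and $\mathcal{U} = \pi(U)$ is compact in $\mathcal{Y} \setminus \partial \mathcal{Y}$.

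Applying Lemma 2.1 to $\mathcal{U}$ produces a compact sub-orbifold $\mathcal{X} \subseteq \mathcal{Y} \setminus \partial \mathcal{Y}$ whose interior contains $\mathcal{U}$ and whose boundary $\partial \mathcal{X}$ is transverse to all twisted strata. I then set $X = \pi^{-1}(\mathcal{X})$, which is a compact $G$-invariant codimension-zero subset of $Y \setminus \partial Y$. The transversality of $\partial \mathcal{X}$ to the twisted strata of $\mathcal{Y}$ lifts, on the cover $Y$, to transversality of $\partial X$ to every fixed-point set $Y^H$ of a subgroup $H \leq G$, which is exactly the condition that guarantees $\partial X$ is a smooth $G$-invariant hypersurface in $Y$ rather than acquiring singularities along orbits with larger isotropy.

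Next, the equivariant collar neighborhood theorem for compact Lie group actions supplies a $G$-equivariant embedding $\varphi: \partial X \times [0,1) \to X$ onto a collar of $\partial X$. Taking two copies $X, X'$ with collars $\varphi, \varphi'$, I form the equivariant doubling
\[W = (X \coprod X') / (\varphi(x,t) \sim \varphi'(x, 1-t)),\]
which inherits a smooth $G$-action and is a closed manifold. The interior of $X$ provides the required $G$-invariant open neighborhood of $U$ equivariantly open-embedded in $W$. Almost-freeness of $G$ on $W$ follows because $\mathcal{Y}$ being an orbifold forces every stabilizer of $G$ on $Y \supseteq X$ to be finite, and the doubling glues along a hypersurface without enlarging any stabilizer.

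The main obstacle is the lifting step: one must verify that the transversality condition of Lemma 2.1 is precisely what is needed for $\partial X = \pi^{-1}(\partial \mathcal{X})$ to be a smooth $G$-invariant hypersurface in $Y$; without this, the doubling would fail to be smooth along the gluing locus. Once this is in hand, the remaining ingredients (equivariant collar, equivariant doubling, and the inheritance of almost-freeness) are routine equivariant versions of the steps in the proof of Proposition 2.1.
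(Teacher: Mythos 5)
Your proposal is correct and is exactly the route the paper intends: the corollary is stated without proof as the equivariant lift of Proposition 2.1, whose construction (Lemma 2.1 to produce $\mathcal{X}$, then collar and doubling) you reproduce faithfully in $G$-equivariant form. The only point worth noting is that smoothness of $\partial X = \pi^{-1}(\partial \mathcal{X})$ is most directly seen from the fact that Lemma 2.1 builds $\partial \mathcal{X}$ from geodesic spheres pushed along geodesics, all of which lift to smooth $G$-invariant hypersurfaces in $Y$, rather than from transversality to the twisted strata alone, which you assert but do not verify.
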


\subsection{The `choose-an-operator' map}
\

For a compact $G$-manifold $Y$ (with boundary), define $T^\ast_G Y$ as the set of covectors on $Y$ that annihilate all vectors tangent to $G$-orbits. 
Equip $Y$ with a $G$-invariant Hermitian structure and let $B^\ast_GY$ be the unit ball in $T^\ast_GY$. In this subsection, we construct a `choose-an-operator' map $\mu=\mu_{G, Y}: K_G^{0, inv}(B_G^\ast Y, \partial B_G^\ast Y) \to K^{fake}_0(Y/G)$, where $K_G^{0, inv}$ stands for those elements in $K_G^0$ represented by virtual bundles corresponding to the trivial representation of $G$. The orbifold `choose-an-operator' map $\mu_{Y/G}$ is defined sector-wise via $\mu_{G,Y}$. (The `choose-an-operator' map when $G$ is the trivial group is constructed in, e.g., \cite{Baum16}.)

We first define certain “good” representatives of elements in $K_G^{0, inv}(B_G^\ast Y, \partial B_G^\ast Y)$, which we call {\em symbolic representatives}, that serve as the symbols of certain pseudo-differential operators representing K-homology classes.

If any two automorphisms of the same bundle agree outside a compact set, they represent the same element in the $K$-group. Thus, to describe an element in $K_G^{0, inv}(B_G^\ast Y, \partial B_G^\ast Y)$, we only need to define the bundle involved and the automorphism near the boundary. Let $\pi_Y: B_G^\ast Y \to Y$, and let $p: \pi_Y^\ast E \to \pi_Y^\ast E$ be a bundle isomorphism defined outside a compact subset—which we refer to as $\supp p$—in $Y \setminus \partial Y \subseteq B_G^\ast Y$.

\ \ \ \ \ \ \ \ \ \ \ \ \ \ \ \ \ \ \ \ \ \ \ \ \ \ \ \ \ \
\tikzset{every picture/.style={line width=0.75pt}}
\begin{tikzpicture}[x=0.75pt,y=0.75pt,yscale=-1,xscale=1]
\draw   (100,24) -- (351.33,24) -- (351.33,165) -- (100,165) -- cycle ;
\draw [color={rgb, 255:red, 208; green, 2; blue, 27 }  ,draw opacity=1 ]   (100,94.5) -- (351.33,94.5) ;
\draw    (129,24) -- (129,165) ;
\draw    (303,24) -- (303,165) ;
\draw    (159,24) -- (159,165) ;
\draw    (199,24) -- (199,165) ;
\draw    (229,24) -- (229,165) ;
\draw [color={rgb, 255:red, 74; green, 144; blue, 226 }  ,draw opacity=1 ][line width=6]    (142.83,94.25) -- (283.17,94.75) ;
\draw (332,74.4) node [anchor=north west][inner sep=0.75pt]  [color={rgb, 255:red, 208; green, 2; blue, 27 }  ,opacity=1 ]  {$Y$};
\draw (315,28.4) node [anchor=north west][inner sep=0.75pt]    {$B_{G}^{\ast } Y$};
\draw (236,67.4) node [anchor=north west][inner sep=0.75pt]  [color={rgb, 255:red, 74; green, 144; blue, 226 }  ,opacity=1 ]  {$supp\ p$};
\draw (98,175) node [anchor=north west][inner sep=0.75pt]   [align=left] {Symbolic representative $\displaystyle p$:\\Constant along each vertical line segment in $\displaystyle B_{G}^{\ast } Y\setminus supp\ p$};
\end{tikzpicture}

A symbolic representative $p$ supported on $\supp p \subseteq B_G^\ast(Y)$ is an automorphism of $\pi_Y^\ast E$ outside $E|_{\supp p}$, with
\begin{itemize}
\item
$E$ a $G$-bundle over $Y$ that corresponds to the trivial representation and is trivial outside $\supp p$,
\item
$p(y,\xi) = p(y,\nu\xi)$ for all $(y,\xi) \in B_G^\ast Y \setminus Y$ and for all $\nu \in \mathbb{R}_+$.
\end{itemize}
Symbolic representatives, modulo the relations of homotopy and isomorphism, form a group that is isomorphic to $K_G^{0, inv}(B_G^\ast Y, \partial B_G^\ast Y)$ (cf. \cite{Atiyah68} page 491-493, also \cite{Atiyah74} page 15). Moreover, we can take this symbolic representative to be self-adjoint.

By Corollary 2.1, we can equivariantly embed a $G$-invariant neighborhood of $\supp p$ into a closed manifold $W$ with an almost free $G$-action. Let $E_\pm$ be the positive and negative parts of the $\mathbb{Z}_2$-grading on $E$, and set $p_+ = p|_{E_+}$. There exist sections of $E_\pm|_{Y \setminus \supp p}$ that form a basis on each fiber, under which the matrix of $p_+$ is the identity. Thus, we can extend $p$ and $E$ trivially to $p_W$ and $E_W$ on $W$. Let $P_W: C^\infty(W, E_W) \to C^\infty(W, E_W)$ be the quantization of the (self-adjoint) symbolic representative $p_W$. Then $P_W$ is a self-adjoint pseudo-differential operator of order $0$ and is of degree $1$ under the $\mathbb{Z}_2$-grading. If $s \in C^\infty(W, E_W)$ is supported in some $U \supseteq \supp p$, then $P_Ws$ is also supported in $U$. If $s \in C^\infty(W, E_W)$ is supported in $W \setminus \supp p$, then $P_Ws = \psi s$ for some invertible map $\psi$ on $W \setminus \supp p$. Let $H = L^2(U, E|_U)^G$ be the space of $G$-invariant $L^2$-sections of $E$ restricted to $U$, which is identified with a subspace of $L^2(W,E_W)$ by extending each section by $0$ outside $U$. Since $P_W: C^\infty(W,E_W) \to C^\infty(W,E_W)$ is elliptic, it uniquely extends in a continuous way to $L^2$-sections. Let $T$ be the restriction of this continuous ($G$-equivariant) extension to the subspace $H$. Let $\phi$ denote the $\mathbb{C}(Y/G)$-module structure on $H$, given by multiplication (restricted to $U$) by a continuous $G$-invariant function. We define 
\[\mu_{G,Y} (\pi_Y^\ast E,p) = (H,\phi,T).\]

\begin{lemma}
The element $(H,\phi,T)$ represents an element in $K^{fake}_0(\mathcal{Y})$.
\end{lemma}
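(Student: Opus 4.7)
The proof plan is to verify the axioms of a Kasparov $\mathbb{C}(\mathcal{Y})$-module from Section 2.3. The grading on $H$ is inherited from $E = E_+ \oplus E_-$, which is preserved by the $G$-action, so $H = L^2(U, E|_U)^G$ is naturally $\mathbb{Z}_2$-graded. The representation $\phi$ acts by multiplication by continuous $G$-invariant functions, so it is evidently a $*$-homomorphism of degree $0$. For $T$: since $P_W$ is a classical order-zero pseudo-differential operator on a closed manifold, the Calder\'on--Vaillancourt theorem gives that its continuous $L^2$-extension is bounded, and self-adjointness and degree $1$ follow from the corresponding properties of the symbolic representative $p_W$. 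These properties pass to the restriction $T$ on $H$.

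The substantive step is showing that $[T, \phi(f)]$ is compact for every $f \in \mathbb{C}(\mathcal{Y})$. I would reduce this to the classical fact that an order-zero pseudo-differential operator on a closed manifold has compact commutator with continuous multiplications: for smooth $g$, the commutator $[P_W, M_g]$ has order $-1$ and is hence compact on $L^2(W, E_W)$ by Rellich, and for merely continuous $g$ one approximates uniformly by smooth functions and takes norm limits. To apply this here, I would pull $f$ back to a $G$-invariant continuous function on $Y$ and extend its restriction to $U$ by a Tietze--$G$-averaging argument to a $G$-invariant continuous function $\tilde{f}$ on $W$. Then $\phi(f)$ on $H \subseteq L^2(W, E_W)$ agrees with the restriction of $M_{\tilde{f}}$, and $T$ is the restriction of (the extension of) $P_W$, so $[T, \phi(f)]$ is the restriction to the closed subspace $H$ of the compact operator $[P_W, M_{\tilde{f}}]$, and is therefore compact.

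The main obstacle I anticipate is making precise the assertion that $T$ is well-defined as a bounded operator from $H$ into $H$: pseudo-differential operators are not local, so $P_W$ may spread the support of a section in $U$ outside $U$. This is handled using the support properties of the symbolic representative. Outside $\mathrm{supp}\, p$ the symbol acts as the fiberwise invertible multiplication $\psi$, so $P_W$ decomposes into a local multiplication piece, which preserves sections supported in $U$, plus a smoothing remainder whose component mapping out of $U$ is compact. A parallel remark applies to the ellipticity condition $(T^2 - I)\phi(f) \in \mathcal{K}(H)$ that is implicit in requiring $T$ to be a self-adjoint automorphism modulo compact operators: since $p_W^2$ differs from the identity only on $\mathrm{supp}\, p$, the compact correction is again absorbed. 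In both places the corrections lie in the compact ideal and do not affect the class $(H, \phi, T)$ in $K^{fake}_0(\mathcal{Y})$.
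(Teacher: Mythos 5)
Your verification of boundedness, self-adjointness, the grading, and the compactness of $[T,\phi(f)]$ (order $-1$ commutator for smooth multipliers, then uniform approximation) matches the paper's treatment of that part, and your concern about $P_W$ spreading supports outside $U$ is legitimate and handled in the spirit the paper intends (away from $\supp p$ the operator is arranged to act by the invertible multiplication $\psi$). The genuine gap is in the Fredholm/essential-invertibility step. You treat $P_W$ as an ordinary elliptic operator of order $0$ on the closed manifold $W$, so that $P_W^2 - I$ has vanishing principal symbol away from $\supp p$ and is ``absorbed'' as a compact correction. But here $G$ is a compact Lie group of positive dimension in general (the Pardon presentation $\mathcal{Y}=Y/G$ uses such $G$), and the symbolic representative $p_W$ is only defined, and only invertible, on $B_G^\ast W$ --- the covectors annihilating the orbit directions. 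Whatever extension of $p_W$ to all of $T^\ast W$ is used to quantize, the resulting $P_W$ is merely \emph{transversally} elliptic: its symbol is not controlled on $T^\ast W \setminus T_G^\ast W$, so $P_W^2-I$ is a genuine order-$0$ operator with nonvanishing principal symbol in the orbit directions and is not compact on $L^2(W,E_W)$. Compactness of its restriction to the invariant subspace $H=L^2(U,E|_U)^G$ is exactly what needs proof, and it does not follow from the symbol calculus on $W$ alone.

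The missing idea is Atiyah's device for transversally elliptic operators, which is the core of the paper's proof: choose generators $v_1,\dots,v_k$ of the $G$-action, form the order-$0$ operator $\Delta_G$ with symbol $\sum_i\langle v_i,\xi\rangle^2/(1+\sum_i\langle v_i,\xi\rangle^2)$, and observe that $(p_W,\delta_G)$ is jointly injective for $\xi\neq 0$ (if $\xi\notin T_G^\ast W$ then $\delta_G\neq 0$; if $\xi\in T_G^\ast W\setminus 0$ then $p_W$ is invertible). Hence $P_W^\ast P_W+\Delta_G^\ast\Delta_G$ is genuinely elliptic with finite-dimensional kernel, and since $G$-invariant sections lie in $\ker\Delta_G$, the invariant parts $(\ker P_W)^G=(\ker P_W^\ast)^G$ are finite-dimensional, giving Fredholmness of $T$ on $H$. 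Without this (or an equivalent descent to an orbifold pseudodifferential calculus on $U/G$), your argument only covers the case of finite $G$.
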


\begin{proof}
Following \cite{Atiyah74}, we show that $T$ is Fredholm by proving that the $G$-invariant parts $(\ker P_W)^G = (\ker P_W^\ast)^G$ are finite-dimensional. Pick a bi-invariant Riemannian metric on $G$, and let $v_1,\cdots, v_k$ be first-order differential operators acting on sections of $E$ corresponding to an orthonormal basis of the Lie algebra of $G$. Consider the bundle map $\pi_W^\ast E \to \pi_W^\ast E$ given by 
\[\delta_G:(x,\xi,e) \to \left(x,\xi,\frac{\sum_i\left<v_i,\xi\right>^2}{1+\sum_i\left<v_i,\xi\right>^2} e\right).\]
The quantization of $\delta_G$ yields an order $0$ pseudo-differential operator whose kernel contains the space of $G$-invariant functions as a closed subspace. Now, the operator $A=(P_W,\Delta_G)$ has an injective symbol, so the symbol of $A^\ast A$ is invertible. Thus, the operator $A^\ast A = P_W^\ast P_W + \Delta^\ast_G \Delta_G$ is elliptic with finite-dimensional kernel. Since $(\ker P_W)^G=(\ker P_W^\ast)^G$ is contained in the kernel of $A^\ast A$, it is also finite-dimensional. Hence, $T$ is Fredholm.

Moreover, for an arbitrary $f$, the order $0$ symbol of the commutator $[P_W|_U,\phi(f)]$ is zero, so $[T,\phi(f)]$ is compact. Thus, $\mu_{G,Y} (\pi_Y^\ast E,p)$ represents an element in $K^{fake}_0(Y)$.
\end{proof}

\begin{prop}
The operator $\mu_{G,Y}$ is well-defined as a map 
\[K_G^{0,inv}(B_G^\ast Y, \partial B_G^\ast Y) \to K_0^{fake}(Y).\]
\end{prop}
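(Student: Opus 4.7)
The plan is to verify well-definedness in two layers. For a fixed symbolic representative $p$, the first task is to show that the Kasparov class $\mu_{G,Y}(\pi_Y^\ast E, p) \in K_0^{fake}(Y)$ is independent of the auxiliary data, namely the embedding $U \hookrightarrow W$ and the choice of quantization $P_W$ of the extended symbol $p_W$. The second task is to show that the resulting assignment descends from symbolic representatives to $K_G^{0,inv}(B_G^\ast Y, \partial B_G^\ast Y)$, i.e., respects direct sums, sends degenerate symbols to zero, and sends homotopic symbols to homotopic Kasparov modules.

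For the first task, if $P_W$ and $P'_W$ are two order-zero self-adjoint quantizations of the same symbol $p_W$, their difference is an order $-1$ classical pseudo-differential operator on the closed manifold $W$, hence compact on $L^2(W,E_W)$. Restricting to the closed $G$-invariant subspace $H$ yields a compact perturbation of $T$, and the linear interpolation $T_t = (1-t)T_0 + tT_1$ is a homotopy of Kasparov modules. For two different embeddings of $\supp p$ into closed $G$-manifolds $W$ and $W'$, I would work with a common relatively compact $G$-invariant neighborhood $U''$ of $\supp p$ which embeds into both $W$ and $W'$, and argue that modulo compact operators the two constructions restrict to the same operator on $L^2(U'', E|_{U''})^G$. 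The justification is pseudo-locality: since the symbol equals the identity outside $\supp p$, the pieces of $P_W$ and $P_{W'}$ coming from the complements of $U''$ act on sections supported in $U''$ by smoothing operators, which are compact.

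For the second task, direct-sum compatibility is immediate from construction, since one can take the quantization of $p_W \oplus p'_W$ to be $P_W \oplus P'_W$. If $p$ is degenerate (so $\supp p = \emptyset$), then $E$ is trivial and $p$ itself is a bundle automorphism; choosing $P_W$ to be multiplication by the corresponding matrix-valued function yields an invertible self-adjoint operator commuting with every $\phi(f)$, i.e., a degenerate Kasparov module. Given a homotopy $\{p_t\}_{t \in [0,1]}$ of symbolic representatives over $Y$, equivalently a single symbolic representative $\tilde{p}$ over $Y \times [0,1]$, I would apply the construction once with ambient manifold $W \times [0,1]$ to obtain a continuous family $T_t$ that realizes the homotopy of Kasparov modules on $H$.

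The step I expect to be the main obstacle is the independence of the embedding $W$. Since quantization is a global construction, one cannot immediately compare $P_W$ and $P_{W'}$ on a shared domain. The key technical input is the pseudo-local behavior of classical pseudo-differential operators combined with the fact that $p_W$ is taken to be the identity outside $\supp p$: any two symbol-preserving extensions differ by a smoothing operator on the ambient manifold, and smoothing operators restricted to sections supported in a relatively compact set away from $\supp p$ are compact. Once this is in place, the comparison reduces to the same compact-perturbation-plus-linear-homotopy argument used for the independence of quantization, and the proof goes through.
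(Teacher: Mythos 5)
Your overall strategy is the same as the paper's: first fix a symbolic representative and show independence of the auxiliary choices, then check that degenerate symbols go to degenerate modules, homotopic symbols to homotopic modules, and direct sums to direct sums. Your treatment of the quantization ambiguity (difference of two quantizations is order $-1$, hence compact, followed by linear interpolation) and of the degenerate and homotopy cases matches the paper's argument in substance, and your pseudo-locality argument for comparing the two ambient manifolds $W$ and $W'$ is a reasonable, somewhat more explicit version of what the paper asserts.

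There is, however, one step you leave genuinely open: the independence of the neighborhood $U$ itself. Your reduction to a common $U''\subseteq U\cap U'$ compares the \emph{operators} on $L^2(U'',E|_{U''})^G$, but the Kasparov modules you must compare live on $L^2(U,E|_U)^G$ and $L^2(U'',E|_{U''})^G$, which are different Hilbert spaces; agreement of operators modulo compacts on the smaller space does not by itself identify the two classes. What is needed is the observation that, because the symbol is an invertible multiplication operator $\psi$ outside $\supp p$ (and the quantization is arranged to preserve supports there), the module on $L^2(U)^G$ splits as the module on $L^2(U'')^G$ plus the module $\bigl(L^2(U\setminus U'')^G,\ \psi\bigr)$, and the latter is degenerate: the operator is invertible and commutes exactly with the $\mathbb{C}(Y/G)$-action. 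This is precisely how the paper disposes of the dependence on $U$ (``the corresponding $K$-homology element defined using $U'$ versus $U$ differs by the image of the degenerate element $(E,p)|_{U\setminus U'}$''), and without it your comparison of the two constructions is not complete. Once this splitting is inserted, your argument goes through.
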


\begin{proof}
We temporarily denote the `choose-an-operator' map with the choices $U,W,p$ by $\mu_{G,Y}^{U,W}(p)$. We need to prove:
\begin{enumerate}
\item
$\mu_{G,Y}^{U,W}(p)$ is independent of the symbolic representative $p$.
\item
$\mu_{G,Y}^{U,W}([p])$ is independent of the choices of $U$ and $W$.
\end{enumerate}

For fixed $U$ and $W$, let $P_W$ and $P_W'$ be two pseudo-differential operators with symbolic representatives satisfying $[p_W]=[p_W']$. If $p_W$ and $p_W'$ are homotopic (in particular, if $p_W=p_W'$ but $P_W \neq P_W'$), i.e., if their symbols are connected by a homotopy of degree $1$ automorphisms of the same bundle $\pi^\ast_W E$, then the Hilbert spaces involved are the same and the operators are connected by a homotopy (given by the quantization of the symbols), hence representing the same class in $K$-homology. If $p_W$ is degenerate in $K_G^{0, inv}(B_G^\ast W,\partial B_G^\ast W)$, then it is an isomorphism between the positive and negative parts in the $\mathbb{Z}_2$-grading of the ambient bundle with empty support. Under the identification given by this isomorphism, the operator $T$ is the identity, and thus $\mu_{G, Y}(p_W)$ is degenerate.

For fixed $U$, the Hilbert spaces are naturally identified, and different choices of $W$ yield the same operator $T$. For $\supp p \subseteq U' \subseteq U$, let $(E,p) \in K_G^{0,inv}(B_G^\ast Y, \partial B_G^\ast Y)$ be a symbolic representative. The corresponding $K$-homology element defined using $U'$ versus $U$ differs by the image of the degenerate element $(E,p)|_{U \setminus U'}$ under the choose-an-operator map. Thus, they define the same element in $K_0^{fake}(Y)$. Since any two general compact sets are contained in their union, the defined element in $K_0^{fake}(Y)$ is independent of the choices of $U$ and $W$.
\end{proof}

We have the following direct corollary.

\begin{cor}
The choose-an-operator map is compatible with open embeddings.
\end{cor}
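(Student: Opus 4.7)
The plan is to unpack what compatibility with open embeddings means and observe that the main content has effectively been established in the proof of Proposition 2.2. Given an equivariant open embedding $i: Y_1 \hookrightarrow Y_2$ of $G$-manifolds, we obtain an induced map $i_!$ on the $K$-theory side (extending a symbolic representative supported in $Y_1 \setminus \partial Y_1$ trivially across $Y_2$) and an induced map $i_\ast$ on the $K$-homology side (acting as $(H,\phi,T) \mapsto (H, \phi \circ i^\ast, T)$ as recorded in Section 2.3). The corollary asserts that $i_\ast \circ \mu_{G,Y_1} = \mu_{G,Y_2} \circ i_!$.

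My strategy is to choose compatible auxiliary data on both sides. Starting with a symbolic representative $(E,p)$ on $B_G^\ast Y_1$, pick a $G$-invariant compact set $U_1 \supseteq \supp p$ inside $Y_1 \setminus \partial Y_1$, and by Corollary 2.1 equivariantly embed a neighborhood of $U_1$ into a closed almost free $G$-manifold $W$. Since $i$ is an open embedding, $U_2 := i(U_1)$ is an admissible choice for computing $\mu_{G,Y_2}(i_!(E,p))$, and the same $W$ serves as the closed ambient manifold on that side. Then the two Hilbert spaces $L^2(U_1, E|_{U_1})^G$ and $L^2(U_2, (i_! E)|_{U_2})^G$ are identified tautologically via $i$, and the $G$-invariant pseudo-differential operator $P_W$ produced from the symbol restricts to the same operator $T$ under this identification. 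All that remains is the module structure: the multiplication representation $\phi_2$ of $f \in \mathbb{C}(Y_2/G)$ on $L^2(U_2, \cdot)^G$ is multiplication by $f|_{U_2}$, which under the identification equals multiplication by $(i^\ast f)|_{U_1}$, i.e.\ $\phi_1(i^\ast f)$. This is exactly the image of $(H_1, \phi_1, T_1)$ under $i_\ast$.

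There is no genuine obstacle here, since Proposition 2.2 already established that $\mu_{G,Y}$ is independent of the choices of $U$ and $W$; consequently the only thing to verify is that one specific compatible pair of choices on either side of the embedding yields matching Kasparov triples, and this is immediate from the formulas. The one point requiring minor care is that the representative $p$ may have to be enlarged so that the underlying bundle $E$ is trivial outside $U_1$ (so that $i_! E$ is defined on all of $Y_2$), but this is exactly the flexibility built into the definition of symbolic representative, and any two such trivializations yield homotopic representatives.
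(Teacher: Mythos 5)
Your proposal is correct and matches the paper's (implicit) reasoning: the paper states this as a direct corollary of Proposition 2.2, precisely because the construction of $(H,\phi,T)$ depends only on data near $\supp p$, so choosing the same $U$ and $W$ on both sides of the embedding yields identical Hilbert spaces and operators, with the module structure transported by $i^\ast$ exactly as $i_\ast$ prescribes. Your write-up simply makes explicit what the paper leaves to the reader.
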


The cotangent bundle of $\mathcal{Y}=Y/G$ is $T^\ast \mathcal{Y} = (T^\ast_GY)/G$. Denote its unit ball bundle by $B^\ast \mathcal{Y}$ (with respect to a Hermitian metric). Observe that the sector $(B^\ast \mathcal{Y})^{(g)} = B^\ast (\mathcal{Y})^{(g)}$, and that the supertrace $str_g$ maps $K^{0,fake}(B^\ast \mathcal{Y}^{(g)}, \partial B^\ast \mathcal{Y}^{(g)})$ to $K_G^{0,inv}(B_G^\ast Y^{(g)}, \partial B_G^\ast Y^{(g)}) \otimes \mathbb{C}$. We define the ‘genuine’ choose-an-operator map $\mu$ sector-wise as the composition of the invariant choose-an-operator map with the supertraces. The orbifold choose-an-operator map is independent of the presentation of the orbifold.

\section{The pairing}
\

\subsection{The Cap product}
\

In this Subsection, we define the K-theoretic cap product on $\mathcal{Y}$. Given $(H, \phi, T) \in K^{fake}_0(\mathcal{Y})$ and $(\mathcal{E},\alpha) \in K^{0,fake}(\mathcal{Y},\partial \mathcal{Y})$, we construct an element $(\tilde{H}, \tilde{\phi}, \tilde{T}) \in K^{fake}_0(\mathcal{Y})$ as the cap product $\alpha \cap T$, following \cite{Kasparov73}.

We first describe the Hilbert $\mathbb{C}(\mathcal{Y})$-module structure $(\tilde{H},\tilde{\phi})$. For a complex orbi-bundle $\mathcal{E}$, let $C(\mathcal{E})$ be continuous sections of $\mathcal{E}$. Then $C(\mathcal{E})$ is a finitely generated projective module over $\mathbb{C}(\mathcal{Y})$. Thus, there is an idempotent matrix $A$ of size $n$, with entries in $\mathbb{C}(\mathcal{Y})$, whose range is $C(\mathcal{E})$. It is viewed as a map $A: H^n \to H^n$ via $\phi: \mathbb{C}(\mathcal{Y}) \to L(H)$. Set $\tilde{H} = \im A$ and let $\tilde{\phi}$ be the natural $\mathbb{C}(\mathcal{Y})$-module structure on $\tilde{H}$.

To describe the Fredholm operator $\tilde{T}: \tilde{H} \to \tilde{H}$, let $C(\alpha)$ be the $\mathbb{C}(\mathcal{Y})$-module homomorphism $C(\mathcal{E}) \to C(\mathcal{E})$ induced by $\alpha: \mathcal{E} \to \mathcal{E}$. Let $T \otimes' 1$ and $1 \otimes C(\alpha)$ be the induced maps on $\im A$, and define
\[\tilde{T} = T \otimes' 1  -\varepsilon 1 \otimes C(\alpha).\]
Recall that in the introduction, we defined \(\varepsilon\) as the operator on \(\mathbb{Z}_2\)-graded objects acting as \((-1)^a\) on the direct summand of degree \(a\).

We give the following remarks.
\begin{enumerate}
\item 
We have $\tilde{H} = H \otimes_{\mathbb{C}(\mathcal{Y})} C(\mathcal{E})$ as vector spaces, but different $A$'s define different inner products on this vector space. Nonetheless, all such inner products define the same topology (which is all we need by Lemma 3.2).
\item 
While $1 \otimes C(\alpha)$ is the actual tensor product, we emphasize that the ‘tensor product’ $T \otimes 1$ does not make sense since the (pseudo-differential) operator $T$ is not $\mathbb{C}(\mathcal{Y})$-linear. Therefore, we use the notation $\otimes'$ and need to introduce the projection $A$ in our construction.
\end{enumerate}

\begin{lemma}
The element $(\tilde{H}, \tilde{\phi},\tilde{T})$ represents an element in $K^{fake}_0(\mathcal{Y})$.
\end{lemma}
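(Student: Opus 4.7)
The plan is to verify the three defining axioms of a Kasparov $\mathbb{C}(\mathcal{Y})$-module for the triple $(\tilde{H}, \tilde{\phi}, \tilde{T})$: (i) that $\tilde{H}$ is a $\mathbb{Z}_2$-graded Hilbert space with $\tilde{\phi}$ a degree-$0$ $*$-representation of $\mathbb{C}(\mathcal{Y})$; (ii) that $\tilde{T}$ is self-adjoint of degree $1$ and Fredholm; (iii) that $[\tilde{T}, \tilde{\phi}(f)]$ is compact for every $f \in \mathbb{C}(\mathcal{Y})$.

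For (i), the Hilbert space structure and $\mathbb{Z}_2$-grading on $\tilde{H} = \im A$ are inherited from $H^n = H \otimes \mathbb{C}^n$ together with the graded structure on $\mathcal{E}$. As already noted in Remark 1, different choices of the idempotent $A$ with the same range yield topologically equivalent norms, so all subsequent assertions depend only on the underlying topology. The representation $\tilde{\phi}$ is just multiplication by continuous functions, tautologically a degree-$0$ $*$-homomorphism. The degree and self-adjointness of $\tilde{T}$ in (ii) follow summand-wise: $T \otimes' 1$ is self-adjoint of degree $1$ because $T$ is, while $\varepsilon \otimes C(\alpha)$ is self-adjoint (both $\varepsilon$ and $\alpha$ are) and of degree $1$ (because $C(\alpha)$ is degree $1$ and $\varepsilon$ is degree $0$).

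For (iii), I would split the commutator $[\tilde{T}, \tilde{\phi}(f)]$ into two pieces. The first, $[T \otimes' 1, \tilde{\phi}(f)] = [T, \phi(f)] \otimes' 1$, is compact by the almost-commuting axiom for $(H, \phi, T)$. The second, $[\varepsilon \otimes C(\alpha), \tilde{\phi}(f)]$, vanishes identically: $\varepsilon$ commutes with $\phi(f)$ because $\phi(f)$ has degree $0$, and $C(\alpha)$ commutes with multiplication by $f$ because $\alpha$ is a bundle map and hence $\mathbb{C}(\mathcal{Y})$-linear on sections.

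The main obstacle is Fredholmness. The idea is to compute $\tilde{T}^2$ and exploit the graded anticommutation $T\varepsilon = -\varepsilon T$, which holds because $T$ has degree $1$. Expanding
\[\tilde{T}^2 = (T \otimes' 1)^2 - \{T \otimes' 1,\ \varepsilon \otimes C(\alpha)\} + (\varepsilon \otimes C(\alpha))^2,\]
the middle anticommutator picks up $(T\varepsilon + \varepsilon T) \otimes C(\alpha) = 0$, and we are left with
\[\tilde{T}^2 = T^2 \otimes' 1 + 1 \otimes C(\alpha)^2\]
modulo compact error terms stemming from $A$'s failure to commute with $T$ (these are compact because $[T, \phi(a)]$ is compact for each of the finitely many matrix entries $a$ of $A$). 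Both summands on the right are non-negative and self-adjoint; the first is a compact perturbation of the compression to $\tilde{H}$ of the Fredholm operator $T^2 \otimes 1$ on $H^n$, hence has finite-dimensional kernel, and adding the non-negative $1 \otimes C(\alpha)^2$ can only further reduce the kernel. Therefore $\tilde{T}^2$, and thus $\tilde{T}$, is Fredholm, completing the verification that $(\tilde{H}, \tilde{\phi}, \tilde{T})$ represents a class in $K^{fake}_0(\mathcal{Y})$.
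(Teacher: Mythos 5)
Your proposal is correct and follows essentially the same route as the paper: the key identities $[\tilde{T},\tilde{\phi}(f)] = [T,\phi(f)] \otimes' 1$ (using that $C(\alpha)$ is $\mathbb{C}(\mathcal{Y})$-linear) and $\tilde{T}^2 = T^2 \otimes' 1 + 1 \otimes C(\alpha)^2$ (using $T\varepsilon + \varepsilon T = 0$) are exactly the paper's two observations. Your extra care about the compact error terms coming from $A$'s failure to commute with $T$ is a legitimate refinement that the paper elides, but it does not change the argument.
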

\begin{proof}
Observe that $T\varepsilon + \varepsilon T=0$, $\varepsilon^2=1$, and that $C(\alpha)$ commutes with $\tilde{\phi}(f)$, so we have
\[\begin{array}{ll}
[\tilde{T},\tilde{\phi}(f)] &= [T,\phi(f)] \otimes' 1;\\[1mm]
\tilde{T}^2 &= T^2 \otimes' 1 + 1 \otimes C(\alpha)^2.
\end{array}\]
Hence, $\tilde{T}$ almost commutes with $\tilde{\phi}(f)$ and $\tilde{T}^\ast=\tilde{T}$ has a finite-dimensional kernel.
\end{proof}

\begin{lemma}
$(H, \phi, T) \in K^{fake}_0(\mathcal{Y})$ depends only on $T$, $\phi$, and the topology on $H$ (and is independent of the inner products on $H$).
\end{lemma}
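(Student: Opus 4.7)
The plan is to exhibit a unitary equivalence between the two Kasparov modules arising from any two compatible Hilbert inner products on $H$. Write $h_0$ and $h_1$ for the inner products, both inducing the ambient topology, and assume $(H, h_i, \phi, T)$ is a Kasparov module for $i = 0, 1$. The open mapping theorem makes the identity $(H, h_0) \to (H, h_1)$ a bounded isomorphism with bounded inverse, so by Riesz representation there is a bounded, positive, invertible operator $P$ on $(H, h_0)$ with $h_1(v, w) = h_0(Pv, w)$. Its square root yields a unitary $U = P^{-1/2}: (H, h_0) \to (H, h_1)$.

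The key step is to show that $P$ commutes with $T$ and with $\phi(f)$ for every $f \in \mathbb{C}(\mathcal{Y})$. For any bounded operator $A$ on $H$ one computes $A^{*_{h_1}} = P^{-1} A^{*_{h_0}} P$ directly from the defining identity for $P$. Applied to $A = T$, the hypothesis of self-adjointness with respect to \emph{both} inner products gives $T = P^{-1} T P$; applied to $A = \phi(f)$, the hypothesis that $\phi$ is a $*$-homomorphism with respect to both inner products gives $\phi(\bar f) = P^{-1} \phi(\bar f) P$. In either case, $P$ commutes with the operator in question, and by the continuous functional calculus so does $U = P^{-1/2}$.

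Conjugation by $U$ now transports the Kasparov module $(H, h_0, \phi, T)$ to $(H, h_1, U \phi(\cdot) U^{-1}, UTU^{-1})$, and the commutation established in the previous step identifies this with $(H, h_1, \phi, T)$. Thus the two triples are unitarily equivalent as Kasparov modules and define the same element in $K^{fake}_0(\mathcal{Y})$.

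The main (mild) obstacle is the commutativity step, where both hypotheses must be used symmetrically; it is essential that $T$ and $\phi$ satisfy the Kasparov axioms with respect to \emph{each} candidate inner product, not merely one. In the application to Lemma 3.1 this is automatic, because $\tilde T = T \otimes' 1 - \varepsilon \otimes C(\alpha)$ is manifestly self-adjoint and $\tilde\phi$ is a $*$-representation relative to the Hilbert structure inherited from $H^n$ for \emph{every} choice of idempotent $A$ representing the module $C(\mathcal{E})$.
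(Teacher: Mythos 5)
Your argument is correct, but it proceeds by a genuinely different mechanism from the paper's. The paper forms the difference $(H,\phi,T)\ominus(H',\phi,T)$ of the two modules and contracts it to a degenerate module via the rotation homotopy
\[
\tilde{T}_t = \left[\begin{matrix}T \cos (\tfrac\pi2t) & \sin (\tfrac\pi2t) I \\ \sin (\tfrac\pi2t) I & -T \cos (\tfrac\pi2t) \end{matrix}\right],
\]
so the identification is made through the homotopy-plus-degenerates relations. You instead exhibit an explicit unitary equivalence: the positive invertible operator $P$ with $h_1(v,w)=h_0(Pv,w)$ commutes with $T$ and with each $\phi(f)$ exactly because both triples satisfy the Kasparov axioms, and $U=P^{-1/2}$ then intertwines the two modules. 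This buys a stronger conclusion (unitary equivalence rather than equality of classes) and makes explicit a hypothesis the paper leaves tacit, namely that $T$ and $\phi$ are self-adjoint, respectively a $*$-representation, for \emph{each} of the inner products being compared; your closing remark correctly notes this is supplied by Lemma 3.1 in the intended application. Your commutation step is in fact also what is needed to make the paper's homotopy fully rigorous, since the adjoint of the identity $(H,h_0)\to(H,h_1)$ is $P$ rather than the identity, so the off-diagonal blocks of $\tilde{T}_t$ are only self-adjoint on $H\oplus H'$ after an observation of this kind. The one small point you should add is that $U$ has degree $0$: the grading operator $\varepsilon$ is self-adjoint for both inner products, so the same computation shows $P$, and hence $P^{-1/2}$, commutes with $\varepsilon$.
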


\begin{proof}
Let $(H, \phi, T)$ be a Kasparov module. Let $H' = H$ as $\mathbb{C}(\mathcal{Y})$-modules and topological spaces, each endowed with a certain inner product. Then
\[
\tilde{T}_t = \left[\begin{matrix}T \cos (\frac\pi2t) & \sin (\frac\pi2t) I \\[1mm] \sin (\frac\pi2t) I & -T \cos (\frac\pi2t) \end{matrix}\right]
\]
defines a homotopy from $(H, \phi, T) \ominus (H', \phi, T)$ to a degenerate Kasparov module. Thus, $(H', \phi, T) = (H, \phi, T)$ in $K^{fake}_0(\mathcal{Y})$.
\end{proof}

\begin{prop}
The (fake) cap product is a map 
\[K_0^{fake}(\mathcal{Y}) \otimes K^{0,fake}(\mathcal{Y}, \partial \mathcal{Y}) \to K_0^{fake}(\mathcal{Y}).\]
\end{prop}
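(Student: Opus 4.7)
The plan is to show that the construction $((H,\phi,T), (\mathcal{E},\alpha)) \mapsto (\tilde{H}, \tilde{\phi}, \tilde{T})$ descends to a well-defined bilinear map on the quotient K-groups. Lemma 3.1 already establishes that the output is a Kasparov module, so what remains is invariance under (i) the auxiliary choice of idempotent $A$ and (ii) each of the defining equivalence relations on the two inputs. For (i), two idempotents $A, A'$ with common image $C(\mathcal{E})$ give the same underlying vector space $H \otimes_{\mathbb{C}(\mathcal{Y})} C(\mathcal{E})$, the same $\tilde{\phi}$, and the same $\tilde{T}$, differing only in the Hilbert norm; since those norms are topologically equivalent (as noted in the first remark after the construction), Lemma 3.2 gives equality of classes in $K_0^{fake}(\mathcal{Y})$. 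Bilinearity in the Hilbert-space variable then follows by using a single idempotent for both direct summands.

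For the equivalences on $(H,\phi,T)$, a unitary equivalence $U: H \to H'$ induces the unitary equivalence $U \otimes' 1$ between cap products. A homotopy $T_t$, with $A$ held fixed, yields a continuous family $\tilde{T}_t = T_t \otimes' 1 - \varepsilon \otimes C(\alpha)$ of Kasparov modules by the same computation as in Lemma 3.1 applied pointwise in $t$. If $(H,\phi,T)$ is degenerate, so $T$ is invertible and commutes with every $\phi(f)$, then $\tilde{T}$ commutes with every $\tilde{\phi}(f)$ (both summands do) and
\[
\tilde{T}^2 = T^2 \otimes' 1 + 1 \otimes C(\alpha)^2 \geq T^2 \otimes' 1 > 0,
\]
showing that $\tilde{T}$ is invertible, so $(\tilde{H}, \tilde{\phi}, \tilde{T})$ is itself degenerate.

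For the equivalences on $(\mathcal{E}, \alpha)$, a bundle isomorphism produces compatible idempotents and conjugate operators, giving unitarily equivalent cap products. A homotopy may be taken with the bundle $\mathcal{E}$ fixed, by the equivariant retraction remark in Section 2.1, so only $\alpha_t$ varies and a single $A$ yields a norm-continuous path $\tilde{T}_t$. If $\alpha$ has empty support then $C(\alpha)$ is an invertible $\mathbb{C}(\mathcal{Y})$-linear endomorphism of $C(\mathcal{E})$, and the linear homotopy $\tilde{T}_s = sT \otimes' 1 - \varepsilon \otimes C(\alpha)$, $s \in [0,1]$, connects $\tilde{T}$ to $-\varepsilon \otimes C(\alpha)$; throughout this path, $\tilde{T}_s^2 = s^2 T^2 \otimes' 1 + 1 \otimes C(\alpha)^2 \geq 1 \otimes C(\alpha)^2 \geq c \cdot 1$ uniformly, while the endpoint $-\varepsilon \otimes C(\alpha)$ is invertible and commutes with every $\tilde{\phi}(f)$, so it is a degenerate Kasparov module. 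Bilinearity in $\alpha$ uses the block-diagonal idempotent $A \oplus A'$.

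The main obstacle I expect is the continuity bookkeeping behind these homotopy arguments. Concretely, for a homotopy of $\alpha$ one has to verify that $s \mapsto C(\alpha_s)$ is norm-continuous as a $\mathbb{C}(\mathcal{Y})$-linear endomorphism of $C(\mathcal{E})$ so that the induced operators $1 \otimes C(\alpha_s)$ on the fixed Hilbert space $\tilde{H}$ form an operator-norm continuous path; and the uniform lower bound on $\tilde{T}_s^2$ in the degeneracy step must hold on the entire path rather than just at endpoints, which rests on the positivity of both $T^2 \otimes' 1$ and $1 \otimes C(\alpha)^2$. Once these continuity checks are in hand, well-definedness on both factors together with bilinearity assemble to give the claimed cap product map.
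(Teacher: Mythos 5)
Your proof takes essentially the same route as the paper: reduce well-definedness to (a) independence of the auxiliary idempotent $A$, handled via the topological equivalence of the induced norms together with Lemma 3.2, and (b) compatibility with each generator of the equivalence relations on the two inputs (unitary equivalence/isomorphism, homotopy, degenerate summands, direct sums). The paper disposes of (b) in a single sentence where you verify each case explicitly — your checks of the degenerate cases, e.g.\ the linear homotopy $sT\otimes' 1 - \varepsilon\otimes C(\alpha)$ with the uniform lower bound on its square, are correct and simply fill in what the paper leaves implicit — while the paper is slightly more explicit than you on (a), exhibiting the lift $\tilde\iota$ that realizes the comparison of idempotents as a continuous matrix transformation intertwining the data up to compacts.
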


\begin{proof}
Given $(H,\phi,T) \in K_0^{fake}(\mathcal{Y})$ and $(\mathcal{E},\alpha) \in K^{0,fake}(\mathcal{Y}, \partial \mathcal{Y})$, we show that
\begin{enumerate}
\item
The cap product $(\tilde{H}, \tilde{\phi},\tilde{T})$ is independent of the choice of $A$ (and $n$).
\item
The cap product is independent of the representatives $(H,\phi, T)$ and $(\mathcal{E},\alpha)$ chosen.
\end{enumerate}

Let $A, A'$ be idempotent matrices of sizes $n$ and $n'$ with $\mathbb{C}(\mathcal{Y})$ coefficients, whose images are identified via a $\mathbb{C}(\mathcal{Y})$-module homomorphism $\iota$ that is also an isometry. Then the composition

\begin{tikzcd}
\tilde{\iota}:(\mathbb{C}(\mathcal{Y}))^n \arrow[r, "A"] & \im A \arrow[r, "{(\iota,0)}"] & \im A' \oplus \im (I-A') = (\mathbb{C}(\mathcal{Y}))^{n'}
\end{tikzcd}

\noindent is a lift of $\iota$ as a module homomorphism, thus realizing the map $\tilde{H} \to \tilde{H}'$ as a matrix transformation, and so this map is continuous. Likewise, the map $\tilde{H}' \to \tilde{H}$ induced by $\iota^{-1}$ is also continuous, so the topologies on $\tilde{H}$ and $\tilde{H}'$ are identical. Moreover, $\tilde{\iota}$ intertwines $\phi(f)$'s and $T$'s up to a compact operator, so different $A$'s give unitarily equivalent cap products.

Different representatives of $K$-homology differ by a sequence of unitary equivalences, homotopies, or direct sums with degenerate objects. Similarly, different representatives of the same element in the $K$-group differ by a sequence of isomorphisms, homotopies, or direct sums with degenerate objects. The cap product, in turn, is also connected by a sequence of unitary equivalences, homotopies, or direct sums with degenerate objects. Thus, the cap product 
\[\cap: K^{fake}_0(\mathcal{Y}) \otimes K^{0,fake}(\mathcal{Y}, \partial \mathcal{Y}) \to K^{fake}_0(\mathcal{Y})\]
is well defined.
\end{proof}

\begin{lemma}
The (fake) cap product is compatible with embeddings.
\end{lemma}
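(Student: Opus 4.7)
The statement I interpret as follows: for an open embedding $j: \mathcal{Y} \hookrightarrow \mathcal{Y}'$ of compact orbifolds, $x = (H,\phi,T) \in K^{fake}_0(\mathcal{Y})$, and $(\mathcal{E}',\alpha') \in K^{0,fake}(\mathcal{Y}',\partial\mathcal{Y}')$, one has
\[
j_\ast(x) \cap (\mathcal{E}',\alpha') \;=\; j_\ast\bigl(x \cap (j^*\mathcal{E}',j^*\alpha')\bigr)
\]
in $K^{fake}_0(\mathcal{Y}')$, where $j^*$ denotes restriction of bundles and bundle maps and $j_\ast(H,\phi,T) := (H,\phi\circ j^*,T)$ as defined in Section 2.3.

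The plan is to unpack the cap product on each side in terms of an idempotent matrix and observe that the two resulting triples coincide \emph{verbatim} as Hilbert subspaces of a common $H^n$, together with the same self-adjoint operator and the same $\mathbb{C}(\mathcal{Y}')$-action. First I would pick an idempotent matrix $A' \in M_n(\mathbb{C}(\mathcal{Y}'))$ whose image is $C(\mathcal{E}')$. Restricting $A'$ entry-wise to $\mathcal{Y}$ yields an idempotent $A := j^*A' \in M_n(\mathbb{C}(\mathcal{Y}))$ whose image is $C(j^*\mathcal{E}')$, because sections of $j^*\mathcal{E}'$ are exactly the restrictions of sections of $\mathcal{E}'$; so $A$ is a legitimate projection to use on the $\mathcal{Y}$-side.

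Next I would compute the two sides using $A'$ and $A$ respectively. On the left, the pushforward $(H,\phi\circ j^*,T)$ together with $A'$ gives the image $\tilde H_L \subseteq H^n$ of the matrix of operators $(\phi(j^*A'_{ij}))_{ij}$, equipped with $\tilde T_L = T \otimes' 1 - \varepsilon\, 1 \otimes C(\alpha')$ and the $\mathbb{C}(\mathcal{Y}')$-action $\phi \circ j^*$. On the right, capping $(H,\phi,T)$ with $(j^*\mathcal{E}',j^*\alpha')$ via $A$ yields the image $\tilde H_R \subseteq H^n$ of $(\phi(A_{ij}))_{ij}$, the operator $\tilde T_R = T \otimes' 1 - \varepsilon\, 1 \otimes C(j^*\alpha')$, and a $\mathbb{C}(\mathcal{Y})$-action; applying $j_\ast$ then re-interprets the action as $\tilde\phi_R \circ j^*$. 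The crucial point is that $\phi(j^*A'_{ij}) = \phi(A_{ij})$, so the two matrices on $H^n$ define the same operator and $\tilde H_L = \tilde H_R$ as subspaces; furthermore $C(j^*\alpha')$ is by definition the restriction of $C(\alpha')$ to $C(j^*\mathcal{E}')$, so $\tilde T_L = \tilde T_R$ on this common space, and the two $\mathbb{C}(\mathcal{Y}')$-module structures coincide.

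The main obstacle I anticipate is purely bookkeeping: keeping track of which idempotent, which representation, and which function algebra is acting on each side. A minor technical point is that different choices of idempotent give a priori different inner products on the cap-product Hilbert space, but by Lemma 3.2 and Proposition 3.1 the resulting K-homology class depends only on the underlying topological vector space and is independent of the choice of $A'$; I may therefore restrict to the matched pair $(A',A)$ constructed above without loss of generality.
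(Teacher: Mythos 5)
Your proposal is correct and follows essentially the same route as the paper: the paper's proof likewise takes an idempotent matrix over $\mathbb{C}(\mathcal{Y}')$ presenting $C(\mathcal{E}')$, restricts its entries to obtain the matched idempotent presenting $C(\mathcal{E}'|_{\mathcal{Y}})$, and observes that computing both sides with this matched pair makes the Hilbert spaces, operators, and module structures coincide verbatim. Your explicit appeal to Lemma 3.2 and Proposition 3.1 to dispose of the dependence on the choice of idempotent is a welcome bit of added care that the paper leaves implicit.
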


\begin{proof}
Given an inclusion of compact sets $\mathcal{Y} \subseteq \mathcal{Y}'$, we view $C(\mathcal{E}|_{\mathcal{Y}})$ as the range of a $\mathbb{C}(\mathcal{Y})$-matrix. Indeed, by restricting the coefficients of this matrix (which are functions on $\mathcal{Y}'$) to $\mathcal{Y}$, we obtain a matrix whose range gives $C(\mathcal{E}|_{\mathcal{Y}})$. Computing under this specific choice of basis shows that each inclusion of $\mathcal{Y}$ induces a map on cap products.
\end{proof}

The genuine cap product 
\[\cap: K_0(\mathcal{Y}) \otimes K^0(\mathcal{Y}, \partial \mathcal{Y}) \to K_0(\mathcal{Y})\]
is defined sector-wise.

\subsection{$K^{0}(\mathcal{Y}, \partial \mathcal{Y})$-module structures}
\

In this subsection, we show that the $K^{0}(\mathcal{Y}, \partial \mathcal{Y})$ module structures on $K^{0}(B^\ast \mathcal{Y}, \partial B^\ast \mathcal{Y})$ and on $K_0(\mathcal{Y})$ are intertwined by $\mu_\mathcal{Y}$.

Let $\pi: B^\ast \mathcal{Y} \to \mathcal{Y}$.
Given elements $(\mathcal{E},\alpha) \in K^{0,fake}(\mathcal{Y}, \partial \mathcal{Y})$ and $(\mathcal{F},\beta) \in K^{0,fake}(B^\ast \mathcal{Y}, \partial B^\ast \mathcal{Y})$, we define (see \cite{Atiyah68} page 490)
\[
\alpha \boxtimes \beta = \beta \otimes 1 - \varepsilon \otimes \pi^\ast \alpha: \mathcal{F} \otimes \pi^\ast \mathcal{E} \to \mathcal{F} \otimes \pi^\ast \mathcal{E}.
\]

This element is supported in the (compact) intersection of $\supp \alpha \subseteq \mathcal{Y} \subseteq B^\ast \mathcal{Y}$ and $\supp \beta \subseteq B^\ast \mathcal{Y}$, making $K^{0,fake}(B^\ast \mathcal{Y}, \partial B^\ast \mathcal{Y})$ a $K^{0,fake}(\mathcal{Y}, \partial \mathcal{Y})$-module. On the other hand, $K^{fake}_0(\mathcal{Y})$ carries a $K^{0,fake}(\mathcal{Y}, \partial \mathcal{Y})$-module structure given by the cap product. We use $\hat{\otimes}$ for the completion of the algebraic tensor product.

\begin{lemma}
Let $\mathcal{E},\mathcal{F}$ be Hermitian bundles with trivial isotropy group action over a compact orbifold $\mathcal{Y}$. Then 
\[L^2(\mathcal{E}) \hat{\otimes}_{\mathbb{C}(\mathcal{Y})} C(\mathcal{F}) \cong L^2(\mathcal{E} \otimes_{\mathbb{C}} \mathcal{F})\]
as $\mathbb{C}(\mathcal{Y})$-modules and as topological spaces.
\end{lemma}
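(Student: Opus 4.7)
The plan is to realize both sides as the image of a single bounded self-adjoint projection acting on $L^2(\mathcal{E})^n$ for an appropriate $n$. Since $\mathcal{F}$ has trivial isotropy action, $\mathcal{F}$ descends to a genuine Hermitian vector bundle on the compact Hausdorff coarse space $|\mathcal{Y}|$, so $C(\mathcal{F})$ is a finitely generated projective module over $\mathbb{C}(\mathcal{Y})$. By Serre--Swan there exists a self-adjoint idempotent $A \in M_n(\mathbb{C}(\mathcal{Y}))$ with $C(\mathcal{F}) = A\cdot\mathbb{C}(\mathcal{Y})^n$; geometrically this realizes $\mathcal{F}$ as the image of the fiberwise projection $A:\mathcal{Y}\times\mathbb{C}^n \to \mathcal{Y}\times\mathbb{C}^n$. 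Acting componentwise through the $\mathbb{C}(\mathcal{Y})$-module structure on $L^2(\mathcal{E})$, the same $A$ defines a bounded self-adjoint projection on $L^2(\mathcal{E})^n = L^2(\mathcal{E}^n)$.

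First I would identify the right-hand side. Tensoring the sub-bundle inclusion $\mathcal{F} \hookrightarrow \mathcal{Y}\times\mathbb{C}^n$ fiberwise with $\mathcal{E}$ exhibits $\mathcal{E}\otimes_{\mathbb{C}}\mathcal{F}$ as the sub-bundle $\im A \subseteq \mathcal{E}^n$, and passing to $L^2$ sections gives an isometric identification $L^2(\mathcal{E}\otimes_{\mathbb{C}}\mathcal{F}) = \im A \subseteq L^2(\mathcal{E}^n)$.

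Next I would identify the left-hand side. The decomposition $\mathbb{C}(\mathcal{Y})^n = C(\mathcal{F}) \oplus (I-A)\mathbb{C}(\mathcal{Y})^n$ yields, after tensoring over $\mathbb{C}(\mathcal{Y})$ with $L^2(\mathcal{E})$, a splitting of $L^2(\mathcal{E})^n = L^2(\mathcal{E})\otimes_{\mathbb{C}(\mathcal{Y})}\mathbb{C}(\mathcal{Y})^n$ into two algebraic summands, the one corresponding to $C(\mathcal{F})$ being precisely $\im A$. A pointwise check shows that the natural interior-tensor-product inner product $\langle s\otimes v, t\otimes w\rangle = \int_{\mathcal{Y}}\langle s(y),t(y)\rangle_{\mathcal{E}_y}\langle v(y),w(y)\rangle_{\mathcal{F}_y}\,dy$ agrees on this summand with the restriction of the standard inner product on $L^2(\mathcal{E}^n)$. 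Since $\im A$ is already a closed subspace of the Hilbert space $L^2(\mathcal{E}^n)$, the Hilbert completion introduces nothing new and is again $\im A$. Composing the two identifications yields the desired isometric $\mathbb{C}(\mathcal{Y})$-module isomorphism.

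I expect the most delicate point to be the Serre--Swan step: it is exactly the trivial-isotropy hypothesis on $\mathcal{F}$ that allows $C(\mathcal{F})$ to be projective over the commutative algebra $\mathbb{C}(\mathcal{Y})$ rather than over a crossed-product algebra, and one should either reduce to the coarse space or quote an orbispace version of the theorem. Once the idempotent $A$ is in hand, every subsequent step is a routine manipulation of a bounded projection on a Hilbert space, and independence of the identification from the particular choice of $A$ (and $n$) follows by the same reasoning as in Proposition 3.1.
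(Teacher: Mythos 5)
Your proof is correct and follows essentially the same route as the paper: both arguments embed $\mathcal{F}$ as a direct summand of a trivial bundle $\underline{n}$ (your self-adjoint idempotent $A$ is precisely the projection onto $C(\mathcal{F})$ along $C(\mathcal{F}')$ in the paper's splitting $\mathcal{F}\oplus\mathcal{F}'=\underline{n}$) and reduce to the free case $L^2(\mathcal{E})\hat{\otimes}_{\mathbb{C}(\mathcal{Y})}\mathbb{C}(\mathcal{Y})^n=L^2(\mathcal{E}^n)$. Your version is slightly more explicit about the inner products and about why the completion adds nothing (closedness of $\im A$), points the paper dispatches with ``easily seen from the proof.''
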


\begin{proof}
If $\mathcal{F} = \underline{n}$, the $n$-dimensional trivial bundle over $\mathcal{Y}$, then $C(\underline{n}) = (\mathbb{C}(\mathcal{Y}))^n$ induces an isomorphism $L^2(\mathcal{E}) \hat{\otimes}_{\mathbb{C}(\mathcal{Y})} C(\underline{n}) = \Bigl(L^2(\mathcal{E}) \hat{\otimes}_{\mathbb{C}(\mathcal{Y})} \mathbb{C}(\mathcal{Y})\Bigr)^n = \Bigl(L^2(\mathcal{E} \otimes \underline{1})\Bigr)^n = L^2(\mathcal{E} \otimes \underline{n})$, as $\mathbb{C}(\mathcal{Y})$-modules and topological spaces.

For general $\mathcal{F}$, there is an $n$ and a complex vector bundle $\mathcal{F}'$ such that $\mathcal{F} \oplus \mathcal{F}' = \underline{n}$ by the compactness of $\mathcal{Y}$. The natural map $L^2(\mathcal{E}) \hat{\otimes}_{\mathbb{C}(\mathcal{Y})} C(\mathcal{F}) \to L^2(\mathcal{E} \otimes \underline{n})$ is injective with image contained in $L^2(\mathcal{E} \otimes \mathcal{F})$, since $C(\underline{n}) = C(\mathcal{F}) \oplus C(\mathcal{F}')$.
Thus, $L^2(\mathcal{E}) \hat{\otimes}_{\mathbb{C}(\mathcal{Y})} C(\mathcal{F})$ may be viewed as a sub-module of $L^2(\mathcal{E} \otimes \mathcal{F})$. Similarly, $L^2(\mathcal{E}) \hat{\otimes}_{\mathbb{C}(\mathcal{Y})} C(\mathcal{F}') \subseteq L^2(\mathcal{E} \otimes \mathcal{F}')$.
Now, it follows from 
\[
L^2(\mathcal{E}) \hat{\otimes}_{\mathbb{C}(\mathcal{Y})} C(\mathcal{F}) \oplus L^2(\mathcal{E}) \hat{\otimes}_{\mathbb{C}(\mathcal{Y})} C(\mathcal{F}') = L^2(\mathcal{E} \otimes \underline{n}) = L^2(\mathcal{E} \otimes \mathcal{F}) \oplus L^2(\mathcal{E} \otimes \mathcal{F}')
\]
that 
\[
L^2(\mathcal{E}) \hat{\otimes}_{\mathbb{C}(\mathcal{Y})} C(\mathcal{F}) = L^2(\mathcal{E} \otimes \mathcal{F}).
\]
It is also easily seen from the proof that the topologies are preserved.
\end{proof}

\begin{prop}
The following diagram commutes.

\begin{tikzcd}
K^0(B^\ast \mathcal{Y}, \partial B^\ast \mathcal{Y}) \otimes K^0(\mathcal{Y}, \partial \mathcal{Y}) \arrow[d, "\boxtimes"] \arrow[r, "\mu \otimes 1"] & K_0(\mathcal{Y}) \otimes K^0(\mathcal{Y}, \partial \mathcal{Y}) \arrow[d, "\cap"] \\
K^0(B^\ast \mathcal{Y}, \partial B^\ast \mathcal{Y}) \arrow[r, "\mu"]                                                 & K_0(\mathcal{Y})                                 
\end{tikzcd}
\end{prop}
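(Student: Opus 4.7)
The plan is to reduce to the fake theory on each twisted sector and then to show that both paths of the diagram produce order-zero pseudo-differential operators with the same principal symbol on the same underlying Hilbert $\mathbb{C}(\mathcal{Y})$-module. Since both $\mu$ and the cap product are defined sector-wise, and since $\boxtimes$ is compatible with the super-trace decomposition of the genuine $K$-group, it suffices to prove commutativity at the level of fake theories on each sector. Fix a self-adjoint symbolic representative $(\mathcal{F},p)$ of the class $\beta \in K^{0,fake}(B^\ast \mathcal{Y}, \partial B^\ast \mathcal{Y})$ as provided by Section 2.5, and a self-adjoint bundle automorphism $\alpha$ of $\mathcal{E}$ representing its class.

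Next I would unwind the right-hand (cap-then-$\mu$) path. By construction $\mu(\beta) = (L^2(U, \mathcal{F}|_U), \phi, T)$, with $T$ the continuous $L^2$-extension of the quantization of $p$. The cap product $\mu(\beta) \cap \alpha$ is computed using an idempotent matrix $A$ with $\im A = C(\mathcal{E})$, yielding the Kasparov module $(\tilde H, \tilde \phi, \tilde T)$ where $\tilde T = T \otimes' 1 - \varepsilon \otimes C(\alpha)$. Lemma 3.3 identifies $\tilde H$ with $L^2(U, \mathcal{F} \otimes \mathcal{E}|_U)$ as a $\mathbb{C}(\mathcal{Y})$-module and as a topological space, and Lemma 3.2 permits us to disregard any discrepancy in the specific inner product. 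Under this identification, the summand $T \otimes' 1$ is a fibrewise (in the $\mathcal{E}$ factor) order-zero quantization of $p$, with principal symbol $p \otimes 1$; the summand $\varepsilon \otimes C(\alpha)$ is multiplication by the bundle endomorphism $\varepsilon \otimes \alpha$, whose principal symbol is $\varepsilon \otimes \pi^\ast \alpha$. Hence $\tilde T$ has principal symbol $p \otimes 1 - \varepsilon \otimes \pi^\ast \alpha = \alpha \boxtimes \beta$, which is by definition also the symbol of the left-hand composite $\mu(\alpha \boxtimes \beta)$.

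Two self-adjoint order-zero pseudo-differential operators with the same principal symbol differ by a compact operator, so the convex interpolation between $\tilde T$ and the quantization used in $\mu(\alpha \boxtimes \beta)$ remains inside the class of Kasparov modules at every parameter: self-adjointness, compactness of commutators with $\tilde \phi(f)$, and Fredholmness off the compact support of the symbol are preserved by linearity of the principal symbol map. This straight-line homotopy produces the equality $\mu(\beta) \cap \alpha = \mu(\alpha \boxtimes \beta)$ in $K_0^{fake}(\mathcal{Y})$, and reassembling sectors yields the genuine statement. The main obstacle is verifying that the idempotent projection $A$ entering the cap product does not perturb the principal symbol of $T \otimes' 1$: this reduces to the fact that $A$ acts by multiplication by a matrix of continuous functions on $\mathcal{Y}$, and such multiplications commute with $T$ up to compact operators (precisely the Kasparov module axiom for $(H, \phi, T)$), so projecting by $A$ commutes with taking principal symbols modulo terms that are invisible in $K$-homology, justifying the symbolic computation above.
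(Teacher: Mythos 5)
Your proof is correct and follows essentially the same route as the paper's: reduce sector-wise to the fake theories, use Lemma 3.3 to identify $L^2(\mathcal{F})\hat{\otimes}_{\mathbb{C}(\mathcal{Y})}C(\mathcal{E})$ with $L^2(\mathcal{F}\otimes\mathcal{E})$, and observe that the cap-product operator is an order-zero quantization of the symbol $\beta\otimes 1-\varepsilon\otimes\pi^\ast\alpha$ defining $\boxtimes$, so that both composites agree by the well-definedness of $\mu$. Your closing remark on why the idempotent $A$ does not disturb the principal symbol is a detail the paper leaves implicit, but it is the same argument.
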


\begin{proof}
We only need to prove the sector-wise commutativity

\begin{tikzcd}
K^{0,fake}(B^\ast \mathcal{Y}, \partial B^\ast \mathcal{Y}) \otimes K^{0,fake}(\mathcal{Y}, \partial \mathcal{Y}) \arrow[d, "\boxtimes"] \arrow[r, "\mu^{fake} \otimes 1"] & K^{fake}_0(\mathcal{Y}) \otimes K^{0,fake}(\mathcal{Y}, \partial \mathcal{Y}) \arrow[d, "\cap"] \\
K^{0,fake}(B^\ast \mathcal{Y}, \partial B^\ast \mathcal{Y}) \arrow[r, "\mu^{fake}"]                                                 & K^{fake}_0(\mathcal{Y}) 
\end{tikzcd}

To do this, let $\beta: \mathcal{F} \to \mathcal{F}$ be an element in $K^{0,fake}(\mathcal{Y})$. Let 
\[P: L^2(\mathcal{E}) \to L^2(\mathcal{E})\]
be a pseudo-differential operator of order $0$ that represents 
\[\alpha: \mathcal{E} \to \mathcal{E}.\]
Let $\mathcal{U}$ be a relatively compact open sub-manifold such that $\alpha$ is an isomorphism and such that $P$ is given by multiplication by a function outside $\mathcal{U}$. We have the following results.

\[\begin{array}{lll}
\beta \boxtimes \sigma(P) &= \beta \otimes 1 -\varepsilon \otimes \sigma(P):& \pi^\ast \mathcal{E} \otimes \pi^\ast \mathcal{F} \to \pi^\ast \mathcal{E} \otimes \pi^\ast \mathcal{F}, \\
P \cap \beta &= P \otimes 1 - \varepsilon \otimes C(\beta):& L^2(\mathcal{E}) \hat{\otimes}_{\mathbb{C}(\mathcal{Y})} C(\mathcal{F}) \to L^2(\mathcal{E}) \hat{\otimes}_{\mathbb{C}(\mathcal{Y})} C(\mathcal{F}).
\end{array}\]
The operator
\[
[P] \cap \beta = [P] \otimes 1 - \varepsilon \otimes C(\beta): L^2(\mathcal{E} \otimes \mathcal{F}) \to L^2(\mathcal{E} \otimes \mathcal{F})
\]
represents $\beta \boxtimes \sigma(P)$ because $\sigma(P \otimes 1) = \sigma(P) \otimes 1$. It also represents $[P] \cap \beta$ by the previous lemma. Therefore, $\mu([\sigma(P)] \boxtimes [\beta]) = [P] \cap [\beta]$, and so the diagram commutes.
\end{proof}

\subsection{K-homological maps $i^!$ and $i_\ast$ and $K$-theoretical map $i_!$}
\

Let $i: \mathcal{Z} \to \mathcal{Y}$ be a closed subset (not necessarily an orbifold) of $\mathcal{Y}$ whose boundary is contained in that of $\mathcal{Y}$.

Recall that 
\[i_\ast^{fake} (H,\phi,T) = (H, \phi \circ i^\ast, T).\]

Conversely, we define 
\[i^{fake, !}(H,\phi,T) = (H, \phi \circ \tilde{i}_!, T),\]
where $\tilde{i}_!: \mathbb{C}(\mathcal{Z}) \to \mathbb{C}(\mathcal{Y})$ is continuous and linear and is an extension for each function in $\mathbb{C}(\mathcal{Z})$.

\begin{lemma}
Such an extension $\tilde{i}_!$ exists.
\end{lemma}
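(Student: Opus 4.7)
The plan is to invoke (a version of) the Borsuk--Dugundji extension theorem, which guarantees that for any closed subset $\mathcal{Z}$ of a metrizable space $\mathcal{Y}$, the restriction map $\mathbb{C}(\mathcal{Y}) \to \mathbb{C}(\mathcal{Z})$ admits a continuous linear section. Since the coarse space of the Riemannian orbifold $\mathcal{Y}$ is metrizable (via the induced length metric), this general result applies. For completeness I would sketch the standard partition-of-unity construction of such a section.

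Explicitly, fix a metric $d$ on $\mathcal{Y}$ compatible with the topology. Cover $\mathcal{Y}\setminus\mathcal{Z}$ by the balls $B(x, d(x, \mathcal{Z})/2)$ for $x \in \mathcal{Y}\setminus\mathcal{Z}$, and pass to a locally finite refinement $\{U_\alpha\}_{\alpha \in A}$, which exists because the open subset $\mathcal{Y}\setminus\mathcal{Z}$ of a metric space is paracompact. For each $\alpha$, pick $z_\alpha \in \mathcal{Z}$ with $d(z_\alpha, U_\alpha) \le 2 \inf_{x \in U_\alpha} d(x, \mathcal{Z})$, and let $\{\phi_\alpha\}$ be a continuous partition of unity on $\mathcal{Y}\setminus\mathcal{Z}$ subordinate to $\{U_\alpha\}$. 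Define
\[
\tilde{i}_!(f)(x) = \begin{cases} f(x), & x \in \mathcal{Z}, \\ \sum_{\alpha} \phi_\alpha(x)\, f(z_\alpha), & x \in \mathcal{Y}\setminus \mathcal{Z}. \end{cases}
\]

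Linearity in $f$ is immediate, and since each value of $\tilde{i}_!(f)(x)$ is either $f(x)$ or a convex combination of values of $f$ on $\mathcal{Z}$, one has $\|\tilde{i}_!(f)\|_\infty \le \|f\|_\infty$, so $\tilde{i}_!: \mathbb{C}(\mathcal{Z}) \to \mathbb{C}(\mathcal{Y})$ is bounded and hence continuous. Continuity of $\tilde{i}_!(f)$ on $\mathcal{Y}\setminus\mathcal{Z}$ is routine from local finiteness. The only substantive step, and the anticipated main obstacle, is continuity of $\tilde{i}_!(f)$ at each $z \in \mathcal{Z}$: if $x \in \mathcal{Y}\setminus\mathcal{Z}$ is close to $z$ and lies in some $U_\alpha$, then the diameter constraint built into the cover together with the choice of $z_\alpha$ forces $d(z_\alpha, z) \le C\, d(x, z)$ for a uniform constant $C$. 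Hence by continuity of $f$ on $\mathcal{Z}$ at $z$, every $f(z_\alpha)$ appearing with nonzero weight $\phi_\alpha(x)$ approaches $f(z)$ as $x \to z$, so $\tilde{i}_!(f)(x) \to f(z)$, completing the argument.
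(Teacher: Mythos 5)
Your proof is correct, but it takes a genuinely different route from the paper. The paper runs a Tietze-type iteration adapted to a metric: it sets $f_0=f$, builds at each stage the closed sets $\mathcal{Z}_{k,\pm}$ where $f_k$ is large positive or negative, defines an explicit Urysohn-type function $g_k$ from the distances to these sets, and takes $\tilde{i}_!(f)=\sum_{k>0}g_k$. You instead use the Borsuk--Dugundji construction: a locally finite refinement of the cover of $\mathcal{Y}\setminus\mathcal{Z}$ by balls $B(x,d(x,\mathcal{Z})/2)$, nearby anchor points $z_\alpha\in\mathcal{Z}$, and a subordinate partition of unity, with the extension $\sum_\alpha\phi_\alpha(x)f(z_\alpha)$. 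Both are valid (the coarse space of a compact orbifold is metrizable, so either applies, and your estimate $d(z_\alpha,z)\le C\,d(x,z)$ for the continuity at boundary points is the standard and correct one). The trade-off is worth noting: your construction is manifestly linear in $f$ --- the cover, the $z_\alpha$, and the $\phi_\alpha$ are chosen once and for all, independently of $f$ --- and is moreover a positive, norm-one operator; since the lemma explicitly asserts that $\tilde{i}_!$ is a continuous \emph{linear} operator, your approach delivers exactly what is claimed. The paper's iteration produces an extension of each individual $f$, but the sets $\mathcal{Z}_{k,\pm}$ (and the preliminary normalization to $\|f\|_\infty\le1$) depend nonlinearly on $f$, so linearity of the resulting assignment is not apparent from that argument; in this respect your proof is arguably the cleaner one. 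The only point you should make explicit is that each $U_\alpha$ of the refinement is contained in some ball $B(y,d(y,\mathcal{Z})/2)$, which is what gives the uniform bound $\operatorname{diam}(U_\alpha)\le 2\,d(x,\mathcal{Z})$ for $x\in U_\alpha$ used in your final estimate.
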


\begin{proof}
One construction is as follows. We extend the real and imaginary parts separately in the same way so that the result is complex linear. We only need to extend functions on $\mathcal{Z}$ with maximum absolute value $1$. We modify the proof of Tietze's extension theorem by incorporating a distance $d$ on $\mathcal{Y}$. For each $f$, in the first step, we set $f_0=f$,
\[\begin{array}{ll}
\mathcal{Z}_{k,+} &= \overline{\{x \in \mathcal{Z}: f_k(x)>(\frac13)^k\}},\\[1mm]
\mathcal{Z}_{k,-} &= \overline{\{x \in \mathcal{Z}: f_k(x)<-(\frac13)^k\}},
\end{array}\]
and define 
\[g_k(x) = \frac{-d(x,\mathcal{Z}_{k,+}) + d(x,\mathcal{Z}_{k,-})}{d(x,\mathcal{Z}_{k,+}) + d(x,\mathcal{Z}_{k,-})} \Bigl(\frac13\Bigr)^k;\]
\[f_{k+1} = f_k - g_k.\]
The extension is given by $\sum_{k>0} g_k$.
\end{proof}

\begin{lemma}
$i^{fake,!}$ is independent of the extension $\tilde{i}_!$. 
\end{lemma}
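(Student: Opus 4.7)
The plan is a linear homotopy on the space of extensions. Given two continuous linear extensions $\tilde{i}_!, \tilde{i}_!' : \mathbb{C}(\mathcal{Z}) \to \mathbb{C}(\mathcal{Y})$, the affine combination $\tilde{i}_!^t := (1-t)\tilde{i}_! + t\tilde{i}_!'$ is again a continuous linear extension for every $t \in [0,1]$, since the condition $\tilde{j}(f)|_\mathcal{Z} = f$ is preserved by convex combinations. This yields a norm-continuous family $\phi_t := \phi \circ \tilde{i}_!^t : \mathbb{C}(\mathcal{Z}) \to L(H)$, and each triple $(H, \phi_t, T)$ defines a Kasparov module over $\mathbb{C}(\mathcal{Z})$: the commutator $[T, \phi(\tilde{i}_!^t(f))]$ is compact because $\tilde{i}_!^t(f) \in \mathbb{C}(\mathcal{Y})$ and $(H, \phi, T)$ is Kasparov over $\mathbb{C}(\mathcal{Y})$.

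To match the paper's notion of homotopy (which fixes the representation and varies only $T$), I would then use the doubling-and-rotation device from the proof of Lemma 3.2. Writing $H'$ for $H$ with grading shifted by one, form the direct sum
\[
(H, \phi_0, T) \oplus (H', \phi_1 \varepsilon, -T),
\]
representing $i^{fake,!}_{\tilde{i}_!}(H, \phi, T) \ominus i^{fake,!}_{\tilde{i}_!'}(H, \phi, T) \in K_0^{fake}(\mathcal{Z})$, and connect it to a degenerate Kasparov module along
\[
\tilde{T}_s = \begin{bmatrix} T\cos(\tfrac{\pi}{2}s) & \sin(\tfrac{\pi}{2}s)\, I \\ \sin(\tfrac{\pi}{2}s)\, I & -T \cos(\tfrac{\pi}{2}s) \end{bmatrix},
\]
while simultaneously sliding the representations on the two summands to a common value via the path $\phi_t$. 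At $s = 1$ the operator becomes the invertible flip and both summands carry the same representation, so the resulting module is degenerate, whence the difference class vanishes.

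The main technical point is maintaining the Kasparov condition along the combined path, because the off-diagonal block generates a commutator of the form $\sin(\tfrac{\pi}{2}s)\, \phi\bigl((\tilde{i}_! - \tilde{i}_!')(f)\bigr)$, where $(\tilde{i}_! - \tilde{i}_!')(f) \in \mathbb{C}(\mathcal{Y})$ vanishes on $\mathcal{Z}$ but does not automatically act as a compact operator through $\phi$. I expect this to be handled by exploiting that the $\mathbb{C}(\mathcal{Z})$-module structure here is built through a merely linear (not multiplicative) extension, which places the argument in the quasi-representation framework implicit in the paper's setup; within that framework, any multiplier landing in the ideal of functions vanishing on $\mathcal{Z}$ contributes only a degenerate summand, and the rotation argument then closes the proof.
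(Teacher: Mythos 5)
Your core device --- doubling the module and running the rotation $\tilde{T}_s$ --- is exactly the paper's proof, which consists of that single formula. Your proposal differs in adding the affine path of extensions, and in honestly flagging the compactness problem; both points deserve comment. The opening observation, that $\tilde{i}_!^t=(1-t)\tilde{i}_!+t\,\tilde{i}_!'$ is again a continuous linear extension and hence yields a norm-continuous family of Kasparov modules $(H,\phi_t,T)$, is in fact the cleanest route: in the standard KK-theoretic notion of homotopy (a module over $C([0,1])\otimes\mathbb{C}(\mathcal{Z})$, in which the representation is allowed to vary) this path alone proves the lemma. The rotation is only needed because Section 2.3 of the paper freezes $\phi$ in its definition of homotopy, and one must then either invoke Kasparov's result that the narrower relation generates the same group, or perform the two deformations in sequence.

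The compactness issue you raise is genuine, and your closing paragraph does not resolve it: nothing in the hypotheses makes $\phi(g)$ compact for $g$ vanishing on $\mathcal{Z}$ (multiplication by such a $g$ on $L^2$ of a neighborhood of $\mathcal{Z}$ is not compact unless $g$ vanishes there), and ``contributes only a degenerate summand'' is asserted rather than proved. Note that the same objection applies verbatim to the paper's own one-line proof: with the two fixed representations $\phi\circ\tilde{i}_!$ and $\phi\circ\tilde{i}_!'$, the off-diagonal commutator along $\tilde{T}_s$ is $\sin(\frac{\pi}{2}s)\,\phi\bigl((\tilde{i}_!-\tilde{i}_!')(f)\bigr)$, and at $s=1$ the flip does not commute with the representation, so the endpoint is not degenerate as written. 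The repair is the decoupling already implicit in your first paragraph: first deform $\phi_0$ to $\phi_1$ along $\phi_t$ with $T$ fixed (no off-diagonal term arises), and only then run the rotation with both summands carrying the same representation, where the off-diagonal commutator vanishes identically and the endpoint is honestly degenerate. With that reordering your argument closes; the simultaneous version, and the paper's version, do not as stated.
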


\begin{proof}
Let $(H,\phi_{1,2}, T)$ be the representatives we constructed from different extensions. Then 
\[
\tilde{T}_t = \left[\begin{matrix}T \cos (\frac\pi2t) & \sin (\frac\pi2t) I \\[1mm] \sin (\frac\pi2t) I & -T \cos (\frac\pi2t) \end{matrix}\right]
\]
defines a homotopy from $(H,\phi_1,T) \ominus (H,\phi_2,T)$ to a degenerate element.
\end{proof}

By definition, we have the following lemma.

\begin{lemma}
The following diagram commutes for any inclusion $i: \mathcal{Z} \to \mathcal{Y}$.

\begin{tikzcd}
{K^{0,fake}(\mathcal{Y}) \otimes K_0^{fake}(\mathcal{Y})} \arrow[d, "i^\ast_{fake} \otimes i^!_{fake}"] \arrow[r, "\cap"] & K_0^{fake}(\mathcal{Y}) \arrow[d, "i^!_{fake}"] \\
{K^{0,fake}(\mathcal{Z}) \otimes K_0^{fake}(\mathcal{Z})} \arrow[r, "\cap"]                                               & K_0^{fake}(\mathcal{Z})                        
\end{tikzcd}
\end{lemma}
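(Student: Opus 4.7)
My plan is to verify the diagram commutes by unfolding both compositions with compatible choices of idempotent matrices, after which only one identification remains and is handled by Lemmas 3.1 and 3.2.

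Fix $(\mathcal{E}, \alpha) \in K^{0,fake}(\mathcal{Y}, \partial \mathcal{Y})$ and $(H, \phi, T) \in K_0^{fake}(\mathcal{Y})$, and choose an idempotent matrix $A \in M_n(\mathbb{C}(\mathcal{Y}))$ whose image is $C(\mathcal{E})$. Since the restriction $i^\ast: \mathbb{C}(\mathcal{Y}) \to \mathbb{C}(\mathcal{Z})$ is a $\ast$-algebra homomorphism, $i^\ast A$ is an idempotent matrix in $M_n(\mathbb{C}(\mathcal{Z}))$ with image $C(i^\ast \mathcal{E}) = C(\mathcal{E}|_{\mathcal{Z}})$, so I will use $A$ to compute the cap product on the top row and $i^\ast A$ on the bottom row.

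The top route gives $\alpha \cap (H,\phi,T) = (\phi(A) H^n,\, \phi(\cdot)|_{\phi(A) H^n},\, T \otimes' 1 - \varepsilon \otimes C(\alpha))$, after which $i^{fake,!}$ leaves the Hilbert space and the Fredholm operator alone while replacing the $\mathbb{C}(\mathcal{Y})$-module map $\phi(\cdot)$ by $\phi(\tilde{i}_!(\cdot))$. The bottom route produces $i^{fake,!}(H,\phi,T) = (H, \phi \circ \tilde{i}_!, T)$ first; the subsequent cap product with $(\mathcal{E}|_{\mathcal{Z}}, \alpha|_{\mathcal{Z}})$ using the idempotent $i^\ast A$ yields the Hilbert $\mathbb{C}(\mathcal{Z})$-module $H \otimes_{\mathbb{C}(\mathcal{Z})} C(\mathcal{E}|_{\mathcal{Z}})$ carrying the module map $f \mapsto \phi(\tilde{i}_!(f))$ and the Fredholm operator $T \otimes' 1 - \varepsilon \otimes C(\alpha|_{\mathcal{Z}})$. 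The module maps and Fredholm operators on the two sides are visibly the same.

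The only delicate step left is the identification of the two Hilbert-module carriers, and this is where I expect the main care: the operator $(\phi \circ \tilde{i}_!)(i^\ast A)$ is not literally an idempotent on $H^n$ (since $\tilde{i}_!$ is only linear, not multiplicative), so its ``image'' must be interpreted through the abstract tensor product picture of Remark 1 following Lemma 3.1. Under the canonical isomorphism $H \otimes_{\mathbb{C}(\mathcal{Y})} C(\mathcal{E}) \cong H \otimes_{\mathbb{C}(\mathcal{Z})} C(\mathcal{E}|_{\mathcal{Z}})$ induced by restriction, the two presentations equip this common vector space with possibly different inner products; Lemma 3.1 (independence of the idempotent matrix) shows they define the same topology, and Lemma 3.2 then forces the two resulting Kasparov modules to coincide in $K_0^{fake}(\mathcal{Z})$, finishing the verification.
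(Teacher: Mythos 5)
Your proposal is a direct unfolding of the definitions, which is also all the paper does: its entire ``proof'' is the phrase ``by definition,'' so you are supplying detail the source omits rather than taking a different route. The overall strategy --- compute both compositions with the compatible idempotents $A$ and $i^\ast A$, observe that the operators agree, and reduce the remaining discrepancy to Lemmas 3.1 and 3.2 --- is sound, and you correctly identify the one genuine subtlety, namely that $\tilde{i}_!$ is only linear, so $(\phi\circ\tilde{i}_!)(i^\ast A)$ is not an idempotent on $H^n$.

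One step deserves more care than you give it. You invoke ``the canonical isomorphism $H\otimes_{\mathbb{C}(\mathcal{Y})} C(\mathcal{E}) \cong H\otimes_{\mathbb{C}(\mathcal{Z})} C(\mathcal{E}|_{\mathcal{Z}})$ induced by restriction,'' but the right-hand side is not literally well-defined: a balanced tensor product over $\mathbb{C}(\mathcal{Z})$ requires $H$ to be a $\mathbb{C}(\mathcal{Z})$-module, and $\phi\circ\tilde{i}_!$ is not multiplicative, so it does not furnish one. The cleaner fix stays inside the matrix picture: both routes can be carried on the honest closed subspace $\im\phi(A)\subseteq H^n$, and the two candidate cut-down operators $\phi(A)$ and $(\phi\circ\tilde{i}_!)(i^\ast A)$, as well as the two module maps $\tilde{\phi}\circ\tilde{i}_!$ and the diagonal action of $\phi(\tilde{i}_!(f))$, differ by $\phi$ applied to matrices of functions vanishing on $\mathcal{Z}$; this is exactly the kind of discrepancy the rotation homotopy of Lemma 3.5 is designed to absorb at the level of $K_0^{fake}(\mathcal{Z})$. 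With that substitution your argument closes; as written, the appeal to a tensor product over $\mathbb{C}(\mathcal{Z})$ is the one soft spot, though it is of the same order of imprecision as the paper's own definition of $i^{fake,!}$.
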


From now on, assume that $\mathcal{Z}$ is a suborbifold. We recall the wrong‐way $K$-cohomological push-forward 
\[i_!^{fake}: K^{0,fake}(B^\ast \mathcal{Z}, \partial B^\ast \mathcal{Z}) \to K^{0,fake}(B^\ast \mathcal{N}, \partial B^\ast \mathcal{N})\]
defined in \cite{Atiyah68}. 
Let $\mathcal{N}_{\mathbb{C}} = B(\pi_\mathcal{Z}^\ast \mathcal{N} \otimes \mathbb{C})$, which is identified with $B^\ast \mathcal{N} \to B^\ast \mathcal{Z}$ by identifying the real part with the fibers of $\mathcal{N} \to \mathcal{Z}$ and the imaginary part with the corresponding cotangent vectors. We define the K-theoretical push-forward in the special case of $\mathcal{Y}=\mathcal{N}$ as
\[i_!^{fake}:\alpha \to (\pi^\ast \alpha) \otimes (str_1 \wedge^\ast \mathcal{N}_\mathbb{C}^\ast).\]
The push-forward $i_!^{fake}$ in the general case is the composition of $i_!^{fake}$ for a tubular neighborhood and the map $K^{0,fake}(B^\ast \mathcal{N}, \partial B^\ast \mathcal{N}) \to K^{0,fake}(B^\ast \mathcal{Y}, \partial B^\ast \mathcal{Y})$ given by extending a virtual bundle trivially outside $\mathcal{N}$. Moreover, $i_!^{fake}$ is independent of the tubular neighborhood chosen \cite{Atiyah74}.

We define the genuine $i_!$, $i_\ast$, and $i^!$ sector‐wise. It is straightforward from the definition that
\[(ij)_\ast = i_\ast j_\ast, \ \ \ \ (ij)^! = j^!i^!, \ \ \ \ (ij)_! = i_!j_!.\]

\subsection{Compatibility}
\

\subsubsection{Statement of results}
\

We will prove the following proposition in this subsection.

\begin{prop}

The following diagrams commute.

\begin{enumerate}
\item
\begin{tikzcd}
K^0(B^\ast \mathcal{Z}, \partial B^\ast \mathcal{Z}) \arrow[r, "i_!"] \arrow[d, "\mu_\mathcal{Z}"] & K^0(B^\ast \mathcal{Y}, \partial B^\ast \mathcal{Y}) \arrow[d, "\mu_\mathcal{Y}"] \\
K_0(\mathcal{Z})                 & K_0(\mathcal{Y})   \arrow[l, "i^!"]  
\end{tikzcd}
\item
\begin{tikzcd}
K^0(B^\ast \mathcal{Z}, \partial B^\ast \mathcal{Z}) \arrow[r, "i_!"] \arrow[d, "\mu_\mathcal{Z}"] & K^0(B^\ast \mathcal{Y}, \partial B^\ast \mathcal{Y}) \arrow[d, "\mu_\mathcal{Y}"] \\
K_0(\mathcal{Z})  \arrow[r, "i_\ast"]              & K_0(\mathcal{Y}) 
\end{tikzcd}
\end{enumerate}
\end{prop}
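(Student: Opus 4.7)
Both diagrams reduce sector-wise to identities in the fake setting. I first observe that diagram (1) is a formal consequence of diagram (2). Indeed,
\[i^{fake,!}\circ i_\ast^{fake}(H,\phi,T) = (H,\phi\circ i^\ast\circ \tilde{i}_!,T) = (H,\phi,T),\]
because the extension $\tilde{i}_!$ satisfies $i^\ast\circ\tilde{i}_! = \mathrm{id}_{\mathbb{C}(\mathcal{Z})}$ by construction. Applying $i^!$ to both sides of the identity $\mu_\mathcal{Y}\circ i_! = i_\ast\circ\mu_\mathcal{Z}$ of diagram (2) therefore produces $\mu_\mathcal{Z} = i^!\circ\mu_\mathcal{Y}\circ i_!$, which is diagram (1). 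Hence it suffices to prove diagram (2) in the fake setting.

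Next, I reduce to the case in which $\mathcal{Y}$ is a tubular neighborhood of $\mathcal{Z}$. Factor $i = j\circ i_\mathcal{N}$, where $\mathcal{N}\subseteq\mathcal{Y}$ is a tubular neighborhood, $i_\mathcal{N}\colon\mathcal{Z}\hookrightarrow\mathcal{N}$ is the zero section, and $j\colon\mathcal{N}\hookrightarrow\mathcal{Y}$ is the open embedding. Functoriality gives $i_! = j_!(i_\mathcal{N})_!$ and $i_\ast = j_\ast(i_\mathcal{N})_\ast$, while Corollary 2.2 provides $\mu_\mathcal{Y}\circ j_! = j_\ast\circ\mu_\mathcal{N}$: trivial extension of a compactly supported symbol across $j$ corresponds exactly to extension-by-zero of the associated $L^2$-sections, which is the pushforward $j_\ast$ on Kasparov modules (the Hilbert space and operator are unchanged, only the action of $\mathbb{C}$-valued functions is pulled back along $j$). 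The problem therefore reduces to the tubular identity $\mu_\mathcal{N}\circ(i_\mathcal{N})_! = (i_\mathcal{N})_\ast\circ\mu_\mathcal{Z}$.

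For this tubular case, I exploit the product structure of $(i_\mathcal{N})_!^{fake}(\alpha) = \pi^\ast\alpha\otimes str_1\wedge^\ast\mathcal{N}_\mathbb{C}^\ast$. Pick a symbolic representative $\alpha\colon E\to E$ and a pseudodifferential operator $P_\mathcal{Z}$ representing $\mu_\mathcal{Z}(\alpha)$, and build — on a suitable compactification of a neighborhood of $\supp(i_\mathcal{N})_!(\alpha)$ in $\mathcal{N}$ — the graded operator
\[P_\mathcal{N}(t) = P_\mathcal{Z}\otimes 1 + t\,\varepsilon\otimes D_\mathcal{N},\qquad t\geq 1,\]
where $D_\mathcal{N}$ is the self-adjoint fiberwise Dolbeault--Dirac operator on $\wedge^\ast\mathcal{N}_\mathbb{C}^\ast$; by design its principal symbol realizes $(i_\mathcal{N})_!^{fake}(\alpha)$. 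A Witten-type scaling $t\to\infty$ concentrates $L^2$-sections onto the fiberwise kernel of $D_\mathcal{N}$, a canonically trivial line bundle over $\mathcal{Z}$ that is identified with $L^2(\mathcal{Z})$ through $\pi^\ast$. On this subspace the limit of $P_\mathcal{N}(t)$ is exactly $P_\mathcal{Z}$, continuous functions on $\mathcal{N}$ act on fiberwise constants by restriction to the zero section (precisely via $i_\mathcal{N}^\ast$), and the orthogonal complement becomes invertible and contributes a degenerate Kasparov module. This homotopy identifies $\mu_\mathcal{N}((i_\mathcal{N})_!^{fake}(\alpha))$ with $(i_\mathcal{N})_\ast^{fake}(\mu_\mathcal{Z}(\alpha))$.

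The principal technical obstacle lies in this last step: executing the adiabatic limit rigorously — with a compactification or weighted-$L^2$ framework in which $D_\mathcal{N}$ has a canonically trivial fiberwise kernel bundle — and then verifying that the limiting $\mathbb{C}(\mathcal{N})$-action on the kernel is literally $i_\mathcal{N}^\ast$-composition, not merely a compact perturbation of it. Once this product-versus-pushforward identification is in place, the earlier reductions complete the proof of both diagrams.
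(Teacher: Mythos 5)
Your two reductions are sound and coincide with the paper's own architecture: the paper also derives diagram (1) from diagram (2) via the identity $i^{fake,!}\circ i_\ast^{fake}=\mathrm{id}$ (Lemma 3.10(1), which is exactly your observation that $i^\ast\circ\tilde{i}_!=\mathrm{id}_{\mathbb{C}(\mathcal{Z})}$), and it also factors $i$ through a tubular neighborhood, handling the open embedding $j$ by Corollary 2.2 together with $j^!j_\ast=\mathrm{id}$ (Lemma 3.11). So everything up to the tubular identity $\mu_\mathcal{N}\circ(i_\mathcal{N})_!=(i_\mathcal{N})_\ast\circ\mu_\mathcal{Z}$ is fine.

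The gap is that this tubular identity --- which is the entire analytic content of the proposition --- is not actually proved: you yourself flag the adiabatic limit and the identification of the limiting $\mathbb{C}(\mathcal{N})$-action as the ``principal technical obstacle,'' and the difficulty you name is real, since in Kasparov's framework the representation $\phi$ must be an exact $*$-homomorphism, so ``functions on $\mathcal{N}$ act on concentrated sections approximately by restriction to the zero section'' does not by itself produce the module $(i_\mathcal{N})_\ast^{fake}(\mu_\mathcal{Z}(\alpha))$. The paper sidesteps exactly this issue by never comparing the two sides over $\mathcal{N}$: instead it applies $\pi_\ast$ (push-forward along the bundle projection, which is inverse to $(i_\mathcal{N})_\ast^{fake}$ by homotopy invariance of $K_0^{fake}$) and compares over $\mathcal{Z}$. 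After $\pi_\ast$, the fiberwise operator $1\otimes\tilde{D}$ (the quantization of $\lambda_\xi$ on $\wedge^\ast\mathcal{N}_{\mathbb{C}}$) commutes \emph{exactly} with the base operator and with the $\mathbb{C}(\mathcal{Z})$-action, so Lemma 3.9 lets one restrict to $\ker(1\otimes\tilde{D})$ with no limiting procedure, and the common kernel of the auxiliary operators $\tilde{D}_i$ is identified with $L^2(\mathcal{Z},\mathcal{E})$ because a section in the kernel is determined by its values on $\mathcal{Z}$. If you want to keep your Witten-deformation route, you must either carry out the concentration estimates and package the varying representation as a genuine homotopy of Kasparov modules over $\mathbb{C}(\mathcal{N})\otimes C[0,1]$, or adopt the paper's $\pi_\ast$ device, which makes the commutation exact and eliminates the limit altogether.
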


\subsubsection{A technical lemma}
\

Let $\pi: \mathcal{N} \to \mathcal{Z}$ be a closed tubular neighborhood of $\mathcal{Z}$.
Define various projections by the following diagram.

\begin{tikzcd}
B^\ast \mathcal{N} \arrow[r, "\tilde{\pi}"] \arrow[d, "\pi_\mathcal{N}"] & B^\ast \mathcal{Z} \arrow[d, "\pi_\mathcal{Z}"] \\ 
\mathcal{N}    \arrow[r, "\pi"] & \mathcal{Z}  
\end{tikzcd}

Our goal in this subsection is to prove

\begin{lemma}
The following diagram commutes.

\begin{tikzcd}
K^{0,fake}(B^\ast \mathcal{Z}, \partial B^\ast \mathcal{Z}) \arrow[r, "i_!^{fake}"] \arrow[d, "\mu_\mathcal{Z}"] & K^{0,fake}(B^\ast \mathcal{N}, \partial B^\ast \mathcal{N}) \arrow[d, "\mu_\mathcal{N}"] \\ 
K^{fake}_0(\mathcal{Z})  & K^{fake}_0(\mathcal{N})   \arrow[l, "\pi_\ast"]  
\end{tikzcd}
\end{lemma}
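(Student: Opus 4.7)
The plan is to choose a self-adjoint symbolic representative $\alpha:\pi_\mathcal{Z}^\ast \mathcal{E}\to\pi_\mathcal{Z}^\ast \mathcal{E}$ over $B^\ast \mathcal{Z}$, chase it around both legs of the square, and identify the two resulting Kasparov modules by an external-product plus Thom-isomorphism argument on the fibers of $\pi:\mathcal{N}\to\mathcal{Z}$.

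First I unwind the left-bottom leg: $\mu_\mathcal{Z}(\alpha)=(L^2(U_\mathcal{Z},\mathcal{E})^G,\phi_\mathcal{Z},T_\alpha)$, with $T_\alpha$ the restriction of the quantization of a closed extension of $\alpha$. Next I unwind the right leg. By definition $i_!^{fake}(\alpha)=\pi^\ast\alpha\otimes\lambda_\mathcal{N}$, where under $\mathcal{N}_\mathbb{C}\cong B^\ast\mathcal{N}\to B^\ast\mathcal{Z}$ the symbol $\lambda_\mathcal{N}=str_1\wedge^\ast\mathcal{N}_\mathbb{C}^\ast$ is the fiberwise Koszul endomorphism $\lambda_{n+i\xi}$ on $\wedge^\ast\mathcal{N}^\ast$ (in the notation of the introduction), which is invertible off $\mathcal{Z}\subseteq\mathcal{N}$. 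I would then choose a pseudo-differential representative for $\mu_\mathcal{N}(i_!\alpha)$ of external-product form $P:=T_\alpha\otimes 1+\varepsilon\otimes D_\mathcal{N}$ acting on $L^2(U_\mathcal{N},\mathcal{E}\otimes\wedge^\ast\mathcal{N}^\ast)^G$, where $D_\mathcal{N}$ is the fiberwise self-adjoint Dirac/Koszul quantization of $\lambda_\mathcal{N}$. Its full symbol is $\pi^\ast\sigma(T_\alpha)\otimes 1+\varepsilon\otimes\sigma(D_\mathcal{N})$, which is homotopic through self-adjoint, off-zero-section invertible symbols to $\pi^\ast\alpha\otimes\lambda_\mathcal{N}$, so $P$ indeed represents $\mu_\mathcal{N}(i_!\alpha)$ by Proposition 2.2. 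Applying $\pi_\ast$ re-endows this Hilbert space with the $\mathbb{C}(\mathcal{Z})$-action $\phi_\mathcal{N}\circ\pi^\ast$.

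The heart of the argument is then to identify $(L^2(U_\mathcal{N},\mathcal{E}\otimes\wedge^\ast\mathcal{N}^\ast)^G,\phi_\mathcal{N}\circ\pi^\ast,P)$ with $(L^2(U_\mathcal{Z},\mathcal{E})^G,\phi_\mathcal{Z},T_\alpha)$ in $K_0^{fake}(\mathcal{Z})$. After adding a harmonic-oscillator confining term along the fibers, which is a compact perturbation at the level of symbols with compact support, $D_\mathcal{N}$ becomes fiberwise Fredholm with one-dimensional kernel concentrated in form-degree $0$, identifying $\ker D_\mathcal{N}$ with the trivial line bundle over $\mathcal{Z}$. Accordingly the Hilbert space decomposes as $L^2(U_\mathcal{Z},\mathcal{E})^G\oplus(\ker D_\mathcal{N})^\perp$, under which $P$ becomes $T_\alpha\oplus\bigl(T_\alpha\otimes 1+\varepsilon\otimes D_\mathcal{N}|_{(\ker D_\mathcal{N})^\perp}\bigr)$, and the $\mathbb{C}(\mathcal{Z})$-action restricts to $\phi_\mathcal{Z}$ on the first summand (because $\pi$-pullbacks are constant along fibers). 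A standard rescaling homotopy, scaling the fiber metric by $t$ and sending $t\to\infty$, contracts the complement piece to a degenerate Kasparov module, leaving $\mu_\mathcal{Z}(\alpha)$.

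The main obstacle is rigor in three places. First, realizing the external-product structure at the level of pseudo-differential operators on an orbifold carrying a compact group action, which I expect to reduce to the manifold case because the $G$-action preserves the fiberwise splitting of the $G$-equivariant tubular neighborhood and commutes with $D_\mathcal{N}$. Second, matching the inner product inherited by $\ker D_\mathcal{N}$ from $L^2(U_\mathcal{N})$ with the intrinsic inner product on $L^2(U_\mathcal{Z},\mathcal{E})$; this is handled by Lemma 3.2, which asserts that the K-homology class depends only on the topology of the Hilbert module, not on the specific inner product. Third, verifying that the rescaling homotopy remains a homotopy of Kasparov modules, which follows from the uniform compact-resolvent estimates supplied by the confining oscillator. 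The whole argument is the K-homological analogue of the classical index-theoretic Thom isomorphism on the normal bundle of $\mathcal{Z}$.
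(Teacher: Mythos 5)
Your overall strategy is the same as the paper's: realize $\mu_\mathcal{N}(i_!^{fake}\alpha)$ by an external-product operator $\tilde T\otimes 1-\varepsilon\,1\otimes \tilde D$ on $L^2$ of the tubular neighborhood, split off the kernel of the fiberwise Thom operator, identify that kernel with $L^2(\mathcal{Z},\mathcal{E})$, and discard the complement as degenerate. Two points differ in execution. First, for the kernel identification the paper does not use the harmonic-oscillator spectral decomposition and a $t\to\infty$ rescaling; instead it applies a clean commutation lemma (Lemma 3.9: if $\Delta$ is degree~$1$, self-adjoint, and commutes exactly with $\phi$ and $\mathcal{T}$, then $(\mathcal{H},\phi,\mathcal{T}-\varepsilon\Delta)=(\ker\Delta,\phi|_{\ker\Delta},\mathcal{T})$, the image part being killed by the linear homotopy to $-\varepsilon\Delta$) iteratively to a family $\tilde D_i$ built from a $\mathbb{C}(\mathcal{Z})$-basis $\{s_i\}$ of $C(\mathcal{N})$; the common kernel consists of fiberwise solutions of first-order ODEs determined by their initial values on $\mathcal{Z}$ (the Gaussian sections), following Atiyah--Singer. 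Your route via the confining term and the spectral gap is the standard analytic Thom isomorphism and should work, but note that the Thom symbol $str_1\wedge^\ast\mathcal{N}_\mathbb{C}^\ast$ already contains the Koszul term $\lambda_n$, so there is nothing extra to ``add''; its quantization is already the confined operator. Second, and this is the one genuine soft spot: your decomposition $P=T_\alpha\oplus(\cdots)$ presupposes that ``$T_\alpha\otimes 1$'' exists as an operator on $L^2(U_\mathcal{N},\cdot)$ commuting exactly with $1\otimes D_\mathcal{N}$ and preserving the kernel splitting. Since $T_\alpha$ is a pseudo-differential operator and not $\mathbb{C}(\mathcal{Z})$-linear, this tensor product does not make sense without an explicit construction; the paper devotes real effort to this (the operator $\tilde T$ built from $\tilde P$ on $Z\times V$, averaged over $G$, with the caveat that $\otimes'$ only becomes a genuine tensor product after pushing forward by $\pi_\ast$). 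You flag this as your ``main obstacle'' but only assert that you ``expect'' it to reduce to the manifold case; to complete the proof you would need to supply such a construction, essentially reproducing the paper's $\tilde P^G$.
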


We remark that composing with the index map $K^{fake}_0(\mathcal{X}) \to \mathbb{C}$ given by the (virtual) Fredholm index of $T_+: H_+ \to H_-$ we know that $i_!^{fake}$ preserves indices.
Thus this lemma could be viewed as a strengthening of corresponding theorems in index theory (e.g., Theorem 4.3 in \cite{Atiyah74} restricted to orbifolds).

We first fix some notations.
Fix a symbolic representative 
\[(\pi_{\mathcal{Z}}^\ast \mathcal{E}, \alpha) \in K^{0,fake}(B^\ast \mathcal{Z}, \partial B^\ast \mathcal{Z}).\]
We define $\beta$ and $\tilde{D}$, and a specific $\tilde{T}$ (and $T$) that fit into the following diagram. Note that the ‘product’ $\otimes'$ is as defined in Section 3.1 and is not the genuine tensor product, as $\tilde{T}$ and $\tilde{D}$ are not $\mathbb{C}(\mathcal{N})$-linear. Nonetheless, $\otimes'$ is the genuine tensor product $\otimes$ after pushing forward to $K_0^{fake}(\mathcal{Z})$ under $\pi_\ast$.

\begin{tikzcd}
(\pi_\mathcal{Z}^\ast \mathcal{E}, \alpha) \arrow[r, "i_!^{fake}"] \arrow[d, "\mu_\mathcal{Z}"] & (\tilde{\pi}^\ast \pi^\ast_\mathcal{Z} \mathcal{E} \otimes str_1 \wedge^\ast \mathcal{N}_{\mathbb{C}}^\ast, \alpha \otimes 1 -\varepsilon 1 \otimes \beta) \arrow[d, "\mu_\mathcal{N}"] \\ 
(L^2(\mathcal{Z}, \mathcal{E}), T)  & \Bigl(L^2\bigl(\mathcal{N}, \pi^\ast (\mathcal{E} \otimes str_1 (\wedge^\ast \mathcal{N}^\ast \otimes \mathbb{C}))\bigr), \tilde{T}\otimes' 1 - \varepsilon 1 \otimes' \tilde{D}\Bigr)
\end{tikzcd}

Let $\beta$ be a symbolic representative of $\lambda_{\xi}: \wedge^\ast \mathcal{N}_{\mathbb{C}} \to \wedge^\ast \mathcal{N}_{\mathbb{C}}$ (as defined in the introduction) in $K^{0,fake}(\mathcal{N})$ and let $\tilde{D}$ be its quantization.
Then $\tilde{D}$ is a self-adjoint 0th order pseudo-differential operator over $\mathcal{N}$ that acts in the fiber direction of $\mathcal{N} \to \mathcal{Z}$.

We write $\mathcal{Z} = Z/G$ and $\mathcal{N} = N/G$ and pick an equivariant, order $0$ pseudo-differential operator $P$ on $Z$ whose symbol represents $\alpha$.
Recall that in Section 2.5, we defined $T$ as (the average of) $P$ restricted to $G$-invariant $L^2$-sections. To define $\tilde{T}$, we pick $\mathcal{N}'$ such that $\mathcal{N} \oplus \mathcal{N}' = Z \times_G V$ for $V$ (the unit ball in) some $G$-representation.
We construct $\tilde{P}$ as the order $0$ pseudo-differential operator on $Z \times V$ given by $P$ on each slice $Z \times \{v\}$, and $\tilde{P}^G$ as the average of $\tilde{P}$ over $G$.
We define $\tilde{T}$ by requiring that its action on a section of $\pi^\ast \mathcal{E}$ over $\mathcal{N}$ equals the restriction to $\mathcal{N}$ of $\tilde{P}_G$ acting on the pull-back of that section along $\mathcal{N} \oplus \mathcal{N}' \to \mathcal{N}$.

\begin{lemma}
For an element $(\mathcal{H}, \phi, \mathcal{T}) \in K^{fake}_0(\mathcal{Z})$, let $\Delta \in L(\mathcal{H})$ be a degree $1$ self-adjoint operator that commutes with $\phi$ and $\mathcal{T}$. Then 
\[
(\mathcal{H}, \phi, \mathcal{T} - \varepsilon \Delta) = (\ker \Delta, \phi|_{\ker \Delta}, \mathcal{T}).
\]
\end{lemma}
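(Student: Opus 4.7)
The plan is to split $(\mathcal{H},\phi,\mathcal{T}-\varepsilon\Delta)$ via the spectral projection $P:=\chi_{\{0\}}(\Delta)$ of $\Delta$ onto its kernel, recognize the $\ker\Delta$-summand as the right-hand side, and contract the complementary summand to a degenerate Kasparov module by a two-step homotopy. Throughout, I read $\varepsilon$ as in the cap product formula of Section~3.1 and in the diagram immediately preceding the lemma: it is the grading under which $\mathcal{T}$ is odd while $\Delta$ is even, so that $\varepsilon\Delta$ is self-adjoint of degree one and $\mathcal{T}-\varepsilon\Delta$ is a genuine Kasparov operator.

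Because $\Delta$ is bounded self-adjoint and $\phi$, $\mathcal{T}$, $\varepsilon$ all commute with $\Delta$, the spectral projection $P$ (being in the Borel functional calculus of $\Delta$) commutes with them as well. Hence the orthogonal decomposition $\mathcal{H}=\ker\Delta\oplus(\ker\Delta)^\perp$ is preserved by $\phi,\mathcal{T},\varepsilon,\Delta$, and the Kasparov module splits as
\[
(\mathcal{H},\phi,\mathcal{T}-\varepsilon\Delta)=(\ker\Delta,\phi|_{\ker\Delta},\mathcal{T}|_{\ker\Delta})\oplus\bigl((\ker\Delta)^\perp,\phi|_{(\ker\Delta)^\perp},(\mathcal{T}-\varepsilon\Delta)|_{(\ker\Delta)^\perp}\bigr).
\]
The first summand is the right-hand side of the lemma, so the task reduces to showing the second summand is trivial in $K_0^{fake}(\mathcal{Z})$.

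For this I would run two successive Kasparov homotopies (each operator self-adjoint of degree one, with compact commutator with $\phi$ inherited from $[\mathcal{T},\phi(f)]$, since $\Delta$ and its Borel-calculus descendants commute with $\phi$ exactly). Let $U:=\mathrm{sign}(\Delta)$, the partial isometry from the polar decomposition of $\Delta$, so that $U^{2}=1$ on $(\ker\Delta)^\perp$. The first homotopy $\mathcal{T}|_{(\ker\Delta)^\perp}-\varepsilon\bigl((1-s)\Delta+sU\bigr)|_{(\ker\Delta)^\perp}$, $s\in[0,1]$, connects the second summand to $\bigl((\ker\Delta)^\perp,\phi|_{(\ker\Delta)^\perp},\mathcal{T}|_{(\ker\Delta)^\perp}-\varepsilon U|_{(\ker\Delta)^\perp}\bigr)$. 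The second, straight-line homotopy $(1-t)\mathcal{T}|_{(\ker\Delta)^\perp}-\varepsilon U|_{(\ker\Delta)^\perp}$, $t\in[0,1]$, then damps $\mathcal{T}$ to zero; its endpoint $-\varepsilon U|_{(\ker\Delta)^\perp}$ commutes with $\phi$ exactly (as $\varepsilon$ and $U$ do) and is invertible (since $U^{2}=1$ there), hence represents a degenerate Kasparov module. The main delicate point --- and the reason to route through $U$ rather than damp $\mathcal{T}-\varepsilon\Delta$ directly --- is that we need the endpoint to be genuinely invertible even when $0$ lies in the essential spectrum of $\Delta$; the phase $U$ is uniformly invertible on $(\ker\Delta)^\perp$, whereas $\Delta$ by itself need not be.
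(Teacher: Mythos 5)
Your proof follows the same architecture as the paper's: split $\mathcal{H}$ into $\ker\Delta$ and its complement (the paper writes the complement as $\im\Delta$), identify the kernel summand with the right-hand side, and contract the complementary summand to a degenerate module by a homotopy that kills $\mathcal{T}$. The one genuine difference is your intermediate step through $U=\mathrm{sign}(\Delta)$. The paper runs the single straight-line homotopy from $\mathcal{T}-\varepsilon\Delta$ to $-\varepsilon\Delta$ on $\im\Delta$ and declares the endpoint degenerate; as you correctly point out, that endpoint is invertible only when $\Delta$ is bounded below on $(\ker\Delta)^\perp$, i.e.\ when $0$ is not a limit point of the spectrum, and the intermediate operators $(1-t)\mathcal{T}-\varepsilon\Delta$ need not even be Fredholm near $t=1$ otherwise. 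Your detour through the phase $U$ (which lies in the commutant of $\phi$ and of $\mathcal{T}$ by Borel functional calculus, and satisfies $U^2=1$ off the kernel) removes this hypothesis and makes the lemma true at the stated level of generality; the paper's shortcut is only justified because in its application $\Delta$ is built from a fiberwise harmonic-oscillator-type symbol with a spectral gap. So your argument is not a different route so much as a repaired version of the paper's, and it is the more robust of the two.
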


\begin{proof}
The operators $\mathcal{T}$ and $\Delta$ commute, so $\ker \Delta$ and $\im \Delta$ are both invariant under $\mathcal{T} - \varepsilon \Delta$ and under $\phi(f)$. Thus, $(\ker \Delta, \phi|_{\ker \Delta}, \mathcal{T})$ and $(\im \Delta, \phi|_{\im \Delta}, \mathcal{T} - \varepsilon \Delta)$ both represent elements in $K_0^{fake}(\mathcal{Z})$, and 
\[
(\ker \Delta, \phi|_{\ker \Delta}, \mathcal{T}) \oplus (\im \Delta, \phi|_{\im \Delta}, \mathcal{T} - \varepsilon \Delta) = (\mathcal{H}, \phi, \mathcal{T} - \varepsilon \Delta).
\]
Now, $(\im \Delta, \phi|_{\im \Delta}, \mathcal{T} - \varepsilon \Delta)$ is homotopic to the degenerate element $(\im \Delta, \phi|_{\im \Delta}, - \varepsilon \Delta)$ by the line segment connecting them, so 
\[
(\ker \Delta, \phi|_{\ker \Delta}, \mathcal{T}) = (\mathcal{H}, \phi, \mathcal{T} - \varepsilon \Delta).
\]
\end{proof}

\begin{proof} (Proof of Lemma 3.8)

Observe that the image under $\pi_\ast$ of $1 \otimes \tilde{D}$ commutes with the images of $\tilde{T} \otimes 1$ and $\phi$, so by the previous lemma we know that 
\[
\pi_\ast\mu_\mathcal{N} i_!^{fake}(\mathcal{E},\alpha) = (\ker (1 \otimes \tilde{D}),\, \tilde{T}\otimes 1|_{\ker (1 \otimes \tilde{D})}).
\]
Let $\{s_i\}$ be a $\mathbb{C}(\mathcal{Z})$-basis of $C(\mathcal{N})$, and form $\beta_i, \tilde{D}_i$ from the $s_i$ as we form $\beta, \tilde{D}$. Then $1 \otimes \tilde{D}_i$ commutes with $\tilde{T} \otimes 1$ and with $\phi$. If $(V,\tilde{T} \otimes 1) \in K_0^{fake}(\mathcal{Z})$, then the self-adjoint operator $\tilde{T} \otimes 1$ almost commutes with the $\mathbb{C}(\mathcal{Z})$-action, and so represents the same element as $(V,\tilde{T} \otimes 1-\varepsilon \tilde{D}_i)$.
Thus, again by the previous lemma, we can restrict $V$ to $\ker \tilde{D}_i$. Hence, 
\[
\pi_\ast\mu_\mathcal{N} i_!^{fake}(\mathcal{E},\alpha) = \Bigl(\bigcap_i \ker (1 \otimes \tilde{D}_i), \tilde{T} \otimes 1\Bigr).
\]
On each fiber of $\mathcal{N} \to \mathcal{Z}$, the equations given by $\tilde{D}_i=0$ precisely determine a section over $\mathcal{N}$ from its initial values in $\mathcal{Z}$ (\cite{Atiyah68}). Thus the common kernel of the $\tilde{D}_i$ is isomorphic to $(L^2(\mathcal{Z}, \mathcal{E}),\tilde{T} \otimes 1) = (L^2(\mathcal{E}), T\otimes 1)$, and the proposition follows.
\end{proof}

\subsubsection{Proof of Proposition 3.3}
\

Note that the statement on twisted sectors is identical to that on the non-twisted sector (with $\mathcal{Z}$, $\mathcal{N}$, and $\mathcal{Y}$ replaced by the corresponding twisted sectors in these spaces).
It suffices to prove the statement sector-wise.

\begin{cor}
The following diagram commutes.

\begin{tikzcd}
K^{0,fake}(B^\ast \mathcal{Z}, \partial B^\ast \mathcal{Z}) \arrow[r, "i_!^{fake}"] \arrow[d, "\mu_\mathcal{Z}"] & K^{0,fake}(B^\ast \mathcal{N}, \partial B^\ast \mathcal{N}) \arrow[d, "\mu_\mathcal{N}"] \\
K^{fake}_0(\mathcal{Z})        \arrow[r, "i_\ast^{fake}"]            & K^{fake}_0(\mathcal{N})  
\end{tikzcd}

\end{cor}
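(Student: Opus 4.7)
The plan is to derive Corollary 3.1 from Lemma 3.8 by showing that $\pi_\ast$ and $i_\ast^{fake}$ are mutually inverse isomorphisms between $K^{fake}_0(\mathcal{N})$ and $K^{fake}_0(\mathcal{Z})$; then applying $i_\ast^{fake}$ to Lemma 3.8 will produce the desired identity.

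The easy direction is $\pi_\ast \circ i_\ast^{fake} = \mathrm{id}$, which follows from functoriality of the push-forward together with $\pi \circ i = \mathrm{id}_\mathcal{Z}$. The substantive step is $i_\ast^{fake} \circ \pi_\ast = \mathrm{id}$, which I would prove by exploiting the fiberwise linear scaling $h_t(x) = t x$ on the tubular neighborhood $\mathcal{N}$. This gives a continuous family of maps $h_t: \mathcal{N} \to \mathcal{N}$ with $h_0 = i \circ \pi$ and $h_1 = \mathrm{id}_\mathcal{N}$, and the induced family of Kasparov modules $(H, \phi \circ h_t^\ast, T)$ should witness the equivalence of $i_\ast^{fake} \pi_\ast (H, \phi, T)$ and $(H, \phi, T)$ in $K^{fake}_0(\mathcal{N})$. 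Granted this, applying $i_\ast^{fake}$ to both sides of the identity $\pi_\ast \mu_\mathcal{N} i_!^{fake} = \mu_\mathcal{Z}$ from Lemma 3.8 immediately yields $\mu_\mathcal{N} i_!^{fake} = i_\ast^{fake} \mu_\mathcal{Z}$, which is precisely the sector-wise commutativity sought by the corollary.

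The main technical issue is that Section 2.3 defines Kasparov homotopy only as a continuous deformation of the operator $T$ with $H$ and $\phi$ fixed, so the family $(H, \phi \circ h_t^\ast, T)$ coming from the scaling is not literally a homotopy in the paper's restricted sense. To bridge this I would either package the family into a single Kasparov module over $\mathbb{C}(\mathcal{N} \times [0,1])$ whose restrictions to $t = 0$ and $t = 1$ recover the two endpoint modules, or else adapt the rotation trick from the proof of Lemma 3.2 to connect $(H, \phi \circ h_0^\ast, T) \ominus (H, \phi \circ h_1^\ast, T)$ to a degenerate module via an explicit operator homotopy, using the norm-continuity of $t \mapsto \phi(f \circ h_t)$ for each fixed $f \in \mathbb{C}(\mathcal{N})$. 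Either route is standard Kasparov bookkeeping; once dispatched, the rest of the argument is entirely formal.
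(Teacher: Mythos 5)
Your proposal is correct and follows essentially the same route as the paper: the paper likewise proves that $\pi_\ast$ and $i_\ast^{fake}$ are mutually inverse, using $\pi \circ i = \mathrm{id}_{\mathcal{Z}}$ for one direction and the homotopy $i \circ \pi \simeq \mathrm{id}_{\mathcal{N}}$ for the other, and then applies this to Lemma 3.8. The only difference is that where you propose to verify homotopy invariance of $K^{fake}_0$ by hand (via a Kasparov module over $\mathbb{C}(\mathcal{N}\times[0,1])$ or a rotation trick), the paper simply cites Kasparov's theorem that homotopic $*$-homomorphisms induce the same map on K-homology.
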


\begin{proof}
Observe that the contraction $\pi: \mathcal{N} \to \mathcal{Z}$ composed with $i^{fake}: \mathcal{Z} \hookrightarrow \mathcal{N}$ is homotopic to $id: \mathcal{N} \to \mathcal{N}$, thus inducing a homotopy $\pi^\ast (i^{fake})^\ast \cong id$ on $\mathbb{C}(\mathcal{N})$. By \cite{Kasparov81} Theorem 3 in Section 3, we know that $i^{fake}_\ast \pi_\ast = id$ on $K_0^{fake}(\mathcal{N})$. On the other hand, it follows from $\pi i^{fake} = id_\mathcal{Z}$ that $\pi_\ast i^{fake}_\ast = id$ on $K_0^{fake}(\mathcal{Z})$. So $\pi_\ast$ is the inverse of $i^{fake}_\ast$, and the corollary follows.
\end{proof}

\begin{lemma}
Let $j: \mathcal{N} \to \mathcal{Y}$ be the inclusion of the closure of an open set.
\begin{enumerate}
\item
$j^!j_\ast$ is the identity map $K_0^{fake}(\mathcal{N}) \to K_0^{fake}(\mathcal{N})$.
\item
If $\mathcal{Y}$ deformation retracts to $\mathcal{N}$, then $j^!$ and $j_\ast$ are inverse to each other.
\end{enumerate}
\end{lemma}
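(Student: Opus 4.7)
The plan is to unfold the definitions for part (1), and then deduce part (2) by combining part (1) with Kasparov's homotopy invariance, much as in Corollary 3.2.

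For part (1), the essential observation is that, by the very definition of $\tilde{j}_!$, each $\tilde{j}_!(f) \in \mathbb{C}(\mathcal{Y})$ restricts to $f$ on $\mathcal{N}$; equivalently, $j^{\ast} \circ \tilde{j}_! = \mathrm{id}_{\mathbb{C}(\mathcal{N})}$. For any $(H,\phi,T) \in K_0^{fake}(\mathcal{N})$ I would simply compute
\[
j^! j_\ast (H,\phi,T) = j^!\bigl(H,\,\phi \circ j^\ast,\, T\bigr) = \bigl(H,\,\phi \circ j^\ast \circ \tilde{j}_!,\, T\bigr) = (H,\phi,T),
\]
appealing to Lemma 3.5 to know that the resulting class is independent of the extension chosen. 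This step is purely formal.

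For part (2), let $r: \mathcal{Y} \to \mathcal{N}$ be a retraction realizing the deformation, so that $r \circ j = \mathrm{id}_{\mathcal{N}}$ and $j \circ r \simeq \mathrm{id}_{\mathcal{Y}}$. Kasparov's homotopy invariance (\cite{Kasparov81}, Theorem 3 in Section 3, already invoked in Corollary 3.2) gives $r_\ast j_\ast = (r \circ j)_\ast = \mathrm{id}$ on $K_0^{fake}(\mathcal{N})$ and $j_\ast r_\ast = (j \circ r)_\ast = \mathrm{id}$ on $K_0^{fake}(\mathcal{Y})$, so $j_\ast$ is an isomorphism with two-sided inverse $r_\ast$. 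Combined with part (1), this forces $j^! = j^! j_\ast r_\ast = r_\ast$, and hence $j_\ast j^! = j_\ast r_\ast = \mathrm{id}$. The only mild subtlety, rather than a true obstacle, is confirming that $r$ and the accompanying homotopy give well-defined orbispace pushforwards on fake K-homology; given the definitions in the paper this is automatic from continuity.
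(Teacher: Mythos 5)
Your proposal is correct and follows essentially the same route as the paper: part (1) is the definitional computation $j^\ast\circ\tilde{j}_! = \mathrm{id}_{\mathbb{C}(\mathcal{N})}$ (the paper simply says ``follows from the definition''), and part (2) invokes Kasparov's homotopy invariance to see that $j_\ast$ is an isomorphism and then combines this with part (1). Your version just spells out the intermediate identity $j^! = r_\ast$ more explicitly.
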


\begin{proof}
Part (1) follows from the definition. If furthermore $\mathcal{Y}$ deformation retracts to $\mathcal{N}$, then $j_\ast$ is an isomorphism by \cite{Kasparov81} Theorem 3 in Section 3. Thus, $j_\ast = (j^!)^{-1}$ by part (1).
\end{proof}

\begin{lemma}
The following diagrams commute for each open embedding $\mathcal{N} \hookrightarrow \mathcal{Y}$.

\begin{tikzcd}
K^{0,fake}(B^\ast \mathcal{N}, \partial B^\ast \mathcal{N}) \arrow[r,"j_\ast"] \arrow[d, "\mu_\mathcal{N}"] & K^{0,fake}(B^\ast \mathcal{Y}, \partial B^\ast \mathcal{Y}) \arrow[d, "\mu_\mathcal{Y}"] \\
K^{fake}_0(\mathcal{N}) \arrow[r, "j_\ast"]                & K^{fake}_0(\mathcal{Y})               
\end{tikzcd}

\begin{tikzcd}
K^{0,fake}(B^\ast \mathcal{N}, \partial B^\ast \mathcal{N}) \arrow[r,"j_\ast"] \arrow[d, "\mu_\mathcal{N}"] & K^{0,fake}(B^\ast \mathcal{Y}, \partial B^\ast \mathcal{Y}) \arrow[d, "\mu_\mathcal{Y}"] \\
K^{fake}_0(\mathcal{N})                & K^{fake}_0(\mathcal{Y})     \arrow[l,"j^!"]           
\end{tikzcd}
\end{lemma}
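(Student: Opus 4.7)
The first diagram is essentially a restatement of Corollary 2.2 (compatibility of the choose-an-operator map with open embeddings), and the second diagram follows formally from the first together with part (1) of the preceding lemma. I would carry out the two steps in that order.

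For the first diagram, I start with a symbolic representative $(\pi_{\mathcal{N}}^\ast E, p) \in K^{0,fake}(B^\ast \mathcal{N}, \partial B^\ast \mathcal{N})$. Its image under $j_\ast$ is obtained by extending $E$ trivially and $p$ by the identity outside $\mathcal{N}$, so $\supp(j_\ast p) = \supp p$ sits in $\mathcal{N}\setminus\partial\mathcal{N} \subseteq \mathcal{Y}\setminus\partial\mathcal{Y}$. In the construction of $\mu_{\mathcal{N}}(p)$ one selects a $G$-invariant compact neighborhood $U$ of $\supp p$ and equivariantly embeds a neighborhood of it into a closed almost free $G$-manifold $W$ (via Corollary 2.1), producing the Kasparov module $(L^2(U,E|_U)^G, \phi_{\mathcal{N}}, T)$, with $T$ built from the quantization of $p$ on $W$. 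Since $\supp(j_\ast p)=\supp p$, the same $U$ and $W$ serve in the construction of $\mu_{\mathcal{Y}}(j_\ast p)$, and by Corollary 2.2 the resulting Hilbert space and operator are literally the same; only the acting algebra changes from $\mathbb{C}(\mathcal{N})$ to $\mathbb{C}(\mathcal{Y})$, with $\phi_{\mathcal{Y}}(f)=\phi_{\mathcal{N}}(f\circ j)$. By the definition of $j_\ast$ on K-homology, this triple is exactly $j_\ast\mu_{\mathcal{N}}(p)$, so $\mu_{\mathcal{Y}}\circ j_\ast = j_\ast\circ \mu_{\mathcal{N}}$.

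For the second diagram I apply $j^!$ to both sides of the identity just obtained:
\[ j^! \circ \mu_{\mathcal{Y}} \circ j_\ast \;=\; j^! \circ j_\ast \circ \mu_{\mathcal{N}} \;=\; \mu_{\mathcal{N}}, \]
where the final equality uses part (1) of the preceding lemma, $j^! j_\ast = \mathrm{id}$ on $K_0^{fake}(\mathcal{N})$. That is precisely the commutativity asserted by the bottom square. The only delicate bookkeeping is the passage between the $\mathbb{C}(\mathcal{N})$- and $\mathbb{C}(\mathcal{Y})$-module structures on the \emph{same} underlying Hilbert space $L^2(U,E|_U)^G$, but this is exactly what the pullback definition of $j_\ast$ on K-homology encodes, so no substantive obstacle arises beyond carefully unpacking the definitions in Sections 2.5 and 3.3.
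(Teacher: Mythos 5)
Your proposal is correct and follows essentially the same route as the paper: the first square is Corollary 2.2 (compatibility of the choose-an-operator map with open embeddings, which you rightly justify by noting that the support, hence the choice of $U$ and $W$, is unchanged under $j_\ast$), and the second square is obtained by applying $j^!$ and invoking $j^!j_\ast=\mathrm{id}$ from part (1) of the preceding lemma. Your version simply unpacks the definitions a bit more explicitly than the paper does.
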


\begin{proof}
The commutativity of the first diagram follows from Corollary 2.2. The commutativity of the second diagram follows from that of the first diagram and the fact that $j^!j_\ast$ is the identity map.
\end{proof}

Proposition 3.3 follows from Lemmas 3.10, 3.11, and Corollary 3.1.

\subsection{Push-forward from inertia orbifold to point}
\

Consider sequences 
\[
(g,\, g_1G_1,\, \cdots,\, g_nG_n)
\]
of cosets in \(G\), where \(G_{i+1}\) is the group generated by the previous coset \(g_iG_i\) and where \(G_1\) is the group generated by \(g\). Let \(\mathcal{G}\) denote the set of such sequences in which, for each \(i\), the element \(g_i\) is not contained in \(G_i\). Given a sequence \(\mathbf{g} = (g, g_1, \cdots, g_n)\), we construct a sequence of cosets by letting \(G_i\) be the group generated by \(g, g_1, \cdots, g_i\). In other words, the sequence \(\mathbf{g}\) defines an element in \(\mathcal{G}\) if and only if each \(g_i\) is not contained in the group generated by the preceding elements.

Given \(\mathbf{g}\), let \(\mathcal{Y}^{\mathbf{g}}\) be the \((n+1)\)-multi-sector corresponding to \(\mathbf{g}\), and let 
\[
i_\mathbf{g}:\mathcal{Y}^{\mathbf{g}} \to \mathcal{Y}
\]
denote the inclusion. Let \(n=|\mathbf{g}|\) and let \(m(\mathbf{g})\) denote the order of the isotropy group of \(\mathcal{Y}^{\mathbf{g}}\). For each \(i\), let \(\mathcal{N}_{g_i}\) be the normal bundle of \(\mathcal{Y}^{(g,g_1,\cdots, g_i)}\) in \(\mathcal{Y}^{(g=g_0, g_1,\cdots, g_{i-1})}\), and in particular, let \(\mathcal{N}_g\) be the normal bundle of \(\mathcal{Y}^{(g)}\) in \(\mathcal{Y}\). Define
\[
e^{-1}_\mathbf{g} = \Bigl(\prod_{i=0}^n str_{g_i}\, \wedge^\ast\Bigl(\mathcal{N}^\ast_{g_i}\otimes \mathbb{C}\Bigr)\Bigr)^{-1}.
\]
These objects depend only on the element in \(\mathcal{G}\) represented by \(\mathbf{g}\) and are independent of the particular representative \(\mathbf{g}\).

We define the fake push-forward \(p_\ast^{fake}\) of an element \((H,\phi,T)\) in the fake \(K\)-homology to be the Fredholm index of the operator 
\[
T_+: H_+ \to H_-,
\]
where \(H_+\) and \(H_-\) denote the positive and negative parts of \(H\) under the \(\mathbb{Z}_2\)-grading, and where \(T_+ = T|_{H_+}\).

\begin{Def}
The orbifold “push-forward to point” \(p_\ast(\varphi)\) of 
\[
\{\varphi^{(g)} = (H^{(g)}, \phi^{(g)}, T^{(g)})\} \in K_0(\mathcal{Y})
\]
is defined by
\[
p_\ast(\varphi) = p_\ast^{fake} \!\left(\sum_{\mathbf{g} \in \mathcal{G}} \frac{(-1)^{|\mathbf{g}|+\dim \mathcal{Y}^{(g)}}}{m(\mathbf{g})} \Bigl(e_\mathbf{g}^{-1}\, i_\mathbf{g}^! \varphi^{(g)}\Bigr)\right).
\]
\end{Def}

Since points in the compact orbifold \(\mathcal{Y}\) have finite isotropy, there is a uniform bound \(N\) on the orders of the isotropy groups, and \(n\)-sectors \(\mathcal{Y}^{\mathbf{g}}\) become empty when \(n>N\). 
Thus, this is a finite sum.

\begin{prop} 
The following diagram commutes, where \(\mathbb{Z} \to \mathbb{C}\) is the natural inclusion and \(\ind (\cdots)_+\) maps an operator \(u\) to the index of its restriction \(u_+\).
\[
\begin{tikzcd}
K^0(B^\ast\mathcal{Y}, \partial B^\ast \mathcal{Y}) \arrow[r, "\mu"] \arrow[d, "\ind (\cdots)_+"] & K_0(\mathcal{Y}) \arrow[d, "p_\ast"] \\
\mathbb{Z} \arrow[r]                                                       & \mathbb{C}                       
\end{tikzcd}
\]
\end{prop}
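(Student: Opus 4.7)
The plan is to deduce the commutativity from Kawasaki's orbifold index theorem, applied iteratively together with Lemma 3.10 and Proposition 3.3. Since $\mu$ is defined sector-wise and the map $\ind(\cdots)_+$ depends only on the symbol class in $K^0(B^\ast \mathcal{Y}, \partial B^\ast \mathcal{Y})$, it suffices to show that the analytic Fredholm index of the pseudo-differential operator representing $\alpha$ equals the explicit sum over $\mathbf{g}\in\mathcal{G}$ in the definition of $p_\ast(\mu(\alpha))$.

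First, I would apply Kawasaki's index theorem \cite{Kawasaki81} to $\mathcal{Y}$, expressing $\ind(\alpha)$ as a weighted sum of orbifold analytic indices on the inertia sectors $\mathcal{Y}^{(g_0)}$: each term is weighted by $1/m(g_0)$ and involves the symbol obtained from $str_{g_0}(\alpha)$ together with the Thom correction $str_{g_0}(\wedge^\ast \mathcal{N}^\ast_{g_0}\otimes\mathbb{C})^{-1}$. By Lemma 3.10, combined with the $\boxtimes$-versus-$\cap$ compatibility of Proposition 3.3, the resulting Kasparov module on $\mathcal{Y}^{(g_0)}$ is precisely $e^{-1}_{(g_0)}\, i^!_{(g_0)}\, \mu^{(g_0)}(\alpha)$. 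Thus the first Kawasaki expansion is already phrased in the language used to define $p_\ast$, except that each summand is still an analytic index on the orbifold $\mathcal{Y}^{(g_0)}$ rather than a fake push-forward to a point.

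Next, I would iterate. The sector $\mathcal{Y}^{(g_0)}$ is itself a compact orbifold with residual isotropy $Z_G(g_0)/\langle g_0\rangle$; applying Kawasaki again to its analytic index produces the next level of multi-sectors $\mathcal{Y}^{(g_0,g_1)}$ indexed by $g_1\notin\langle g_0\rangle$, and Lemma 3.10 at this stage builds in the additional Thom factor $str_{g_1}(\wedge^\ast \mathcal{N}^\ast_{g_1}\otimes\mathbb{C})^{-1}$. The restriction $g_i\notin G_i$ in the definition of $\mathcal{G}$ precisely prevents redundant summands arising from trivial cosets already accounted for at a previous stage. Since the compact orbifold $\mathcal{Y}$ has uniformly bounded isotropy orders, this recursion terminates after finitely many steps; at the terminal stage the remaining indices are genuine fake push-forwards $p_\ast^{fake}$, and the accumulated factors across all iterations combine to give the cocycle $e^{-1}_\mathbf{g}$ and the weight $1/m(\mathbf{g})$ for each sequence $\mathbf{g}\in\mathcal{G}$.

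The main obstacle is the combinatorial bookkeeping of signs and multiplicities. At each level of iteration, Kawasaki's formula contributes a sum over conjugacy classes in the residual isotropy with weights $1/m(g_i)$, and one must verify that these factors telescope to the single weight $1/m(\mathbf{g})$ appearing in the definition of $p_\ast$. The alternating signs $(-1)^{|\mathbf{g}|+\dim \mathcal{Y}^{(g)}}$ emerge because each step re-expresses the orbifold analytic index on a multi-sector as a fake push-forward plus inclusion--exclusion corrections from its twisted sub-sectors with opposite sign. A careful combinatorial check — essentially an induction on $|\mathbf{g}|$ — is required to confirm that the only surviving contributions are those indexed by $\mathcal{G}$, and that the dimension sign reflects the real parity of the fixed locus relative to the ambient sector. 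Once this bookkeeping is carried out, the two expressions for $\ind(\alpha)$ coincide and the diagram commutes.
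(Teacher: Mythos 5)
Your proposal follows essentially the same route as the paper: an iterated application of Kawasaki's index theorem over multi-sectors, organized as an inclusion--exclusion whose surviving terms are indexed by \(\mathcal{G}\), with the \(i_!\)/\(i^!\) compatibility of Proposition 3.3 used to convert each analytic index \(\ind [u_\mathbf{g}]_+\) into \(p_\ast^{fake}\bigl(e_\mathbf{g}^{-1}\, i_\mathbf{g}^!\, \mu(u)^{(\mathbf{g})}\bigr)\). The only step you omit is the preliminary reduction to a closed orbifold via Proposition 2.1 and excision (needed before Kawasaki's theorem applies); the combinatorial bookkeeping you defer is exactly the recursion the paper writes out, together with the identity \((i_\mathbf{g})_! (u_\mathbf{g})|_{\mathcal{N}_\mathbf{g}} = str_g\, u|_{\mathcal{N}_\mathbf{g}}\) that justifies the final application of Proposition 3.3.
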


\begin{proof}
Restricting or extending a symbol outside its support preserves the analytic index by the excision property (Theorem 3.7 in \cite{Atiyah74}). 
Taking into account Proposition 2.1, it suffices to prove the proposition when \(\mathcal{Y}=\mathcal{W}\) is closed.

Given \(u \in K^0(B^\ast \mathcal{Y}, \partial B^\ast \mathcal{Y})\), define 
\[
u_\mathbf{g} = e_\mathbf{g}^{-1}\, i_{\mathbf{g}}^\ast (str_g\, u).
\]
We compute \(\ind u_+\) by Kawasaki's index theorem:
\[
\ind u_+ = \sum_{(g)} (-1)^{\dim \mathcal{Y}^{(g)}} \int_{[T\mathcal{Y}^{(g)}]} ch (u_g)_+ \cdot td\Bigl(T\mathcal{Y}^{(g)}\otimes \mathbb{C}\Bigr).
\]
Note that $u_g$ is already a supertrace, so the supertraces act as the identity on \(u_g\).

Each term on the right-hand side is (up to the sign \((-1)^{\dim \mathcal{Y}^{(g)}}\)) the principal term (corresponding to the non-twisted stratum \(g_1\in \langle g\rangle\), the group generated by \(g\)) of Kawasaki's index formula for \(u_g\) over \(\mathcal{Y}^{(g)}\).
By Kawasaki's index formula,
\[
\ind [u_g]_+ = \sum_{(g_1)} (-1)^{\dim \mathcal{Y}^{(g)(g_1)}} \int_{[T\mathcal{Y}^{(g)(g_1)}]} ch \!\left(\frac{i_{g_1}^\ast u_g}{str_{g_1}\, \wedge^\ast (\mathcal{N}^\ast_{g_1}\otimes \mathbb{C})}\right)_+ \cdot td\Bigl(T\mathcal{Y}^{(g)(g_1)}\otimes \mathbb{C}\Bigr).
\]
Note that all terms with \(\mathcal{Y}^{(g)(g_1)}=\mathcal{Y}^{(g)}\) in the sum are the same. We move the terms corresponding to \(g_1\notin \langle g\rangle\) to the left and divide both sides by \(m(g)\) to obtain
\[
\begin{array}{ll}
&\dint_{[T\mathcal{Y}^{(g)(g_1)}]} ch \!\left(\dfrac{i_{g_1}^\ast u_g}{str_{g_1}\, \wedge^\ast (\mathcal{N}^\ast_{g_1}\otimes \mathbb{C})}\right)_+ \cdot td\Bigl(T\mathcal{Y}^{(g)(g_1)}\otimes \mathbb{C}\Bigr) \\[1mm]
= \dfrac{1}{m(g)}\ind [u_g]_+ - &\dsum_{(g,g_1)\in \mathcal{G}} (-1)^{\dim \mathcal{Y}^{(g)(g_1)}} \dint_{[T\mathcal{Y}^{(g)(g_1)}]} ch \!\left(\dfrac{i_{g_1}^\ast u_g}{str_{g_1}\, \wedge^\ast (\mathcal{N}^\ast_{g_1}\otimes \mathbb{C})}\right)_+ \cdot td\Bigl(T\mathcal{Y}^{(g)(g_1)}\otimes \mathbb{C}\Bigr).
\end{array}
\]
This equation expresses the desired integration on the sector \(\mathcal{Y}^{(g)}\) in terms of the index of \(u_g\) and integrations of the same form on 2-sectors. A similar result with \((\mathcal{Y}^{(\mathbf{g})},\, \mathcal{Y}^{(\mathbf{g})(g_{n+1})},\, u_{\mathbf{g}})\) in place of \((\mathcal{Y}^{(g)},\, \mathcal{Y}^{(g)(g_1)},\, u_g)\) expresses the relevant integrations on \((n+1)\)-sectors in terms of \(\ind [u_\mathbf{g}]_+\) and integrations on \((n+2)\)-sectors. 

Next, we have
\[
(i_\mathbf{g})_! (u_\mathbf{g})|_{\mathcal{N}_\mathbf{g}} = str_g\, u|_{\mathcal{N}_\mathbf{g}}.
\]
This also follows by induction on \(|\mathbf{g}|\); the base case and the induction step both reduce to
\[
(i_g)_! u_{(g)}|_{\mathcal{N}_g} = str_g\, u|_{\mathcal{N}_g}
\]
for appropriate \(\mathcal{N}\) and \(u\). Indeed, the latter is equivalent to
\[
\pi_g^\ast i_g^\ast \Bigl(str_g\, \frac{u}{\wedge^\ast (\mathcal{N}_g^\ast\otimes \mathbb{C})}\Bigr) \cdot str_g\, \wedge^\ast (\mathcal{N}_g^\ast\otimes \mathbb{C}) = str_g\, u,
\]
which can be rewritten as
\[
\Bigl(\pi_g^\ast i_g^\ast \Bigl(str_g\, \frac{u}{\wedge^\ast (\mathcal{N}_g^\ast\otimes \mathbb{C})}\Bigr) - str_g\, \frac{u}{\wedge^\ast (\mathcal{N}_g^\ast\otimes \mathbb{C})}\Bigr) \cdot str_g\, \wedge^\ast (\mathcal{N}_g^\ast\otimes \mathbb{C}) = 0.
\]
Here, the first term vanishes on \(\mathcal{Y}^g\) and the second term vanishes on \(\mathcal{N}^g\setminus \mathcal{Y}^g\). 

Finally, applying part (1) of Proposition 3.3, we obtain
\[
\begin{array}{ll}
\ind ([u_\mathbf{g}]_+) &= p_\ast^{fake}\Bigl(\mu (u_\mathbf{g})\Bigr)\\[1mm]
& = p_\ast^{fake}\Bigl(i_{\mathbf{g}}^!\, \mu\bigl((i_{\mathbf{g}})_!(u_{\mathbf{g}})\bigr)\Bigr)\\[1mm]
& = p_\ast^{fake}\Bigl(i_{\mathbf{g}}^!\, \bigl(\mu (u)\bigr)^{(\mathbf{g})}\Bigr),
\end{array}
\]
and the proposition follows.
\end{proof}

It also follows from the definitions of \(i^!\) and \(p_\ast\) that for any inclusion \(i: \mathcal{X} \to \mathcal{Y}\), each fake push-forward to a point is respected by the wrong-way pull-back \(i^!\). Thus, push-forwards to points are preserved by \(i^!\).

\subsection{Compatibility}
\

Given an element \(\alpha \in K^0(\mathcal{Y})\) and an element \(T \in K_0(\mathcal{Y})\), we define their pairing by
\[
T(\alpha) = p_\ast (\alpha \cap T).
\]
The following proposition is a consequence of Propositions 3.2 and 3.4.

\begin{prop}
For \(\beta \in K^0(B^\ast \mathcal{Y}, \partial B^\ast \mathcal{Y})\) and \(\alpha \in K^0(\mathcal{Y}, \partial \mathcal{Y})\), we have
\[
(\mu(\beta))(\alpha) = p_\ast (\beta \boxtimes \alpha).
\]
\end{prop}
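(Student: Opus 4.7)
The strategy is to chain together the two commutative diagrams that have just been established, so the proof reduces to unfolding the definition of the pairing and reading off the result. Unpacking the left-hand side via the definition $T(\alpha) = p_\ast(\alpha \cap T)$ gives
\[(\mu(\beta))(\alpha) = p_\ast\bigl(\alpha \cap \mu(\beta)\bigr).\]
First I would apply Proposition 3.2 to the element $\beta \otimes \alpha \in K^0(B^\ast \mathcal{Y}, \partial B^\ast \mathcal{Y}) \otimes K^0(\mathcal{Y}, \partial \mathcal{Y})$: chasing it around the square in the two possible ways yields the identity $\alpha \cap \mu(\beta) = \mu(\beta \boxtimes \alpha)$ in $K_0(\mathcal{Y})$. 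Substituting this into the displayed formula gives
\[(\mu(\beta))(\alpha) = p_\ast\bigl(\mu(\beta \boxtimes \alpha)\bigr).\]

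Next I would invoke Proposition 3.4, which states that the composition $p_\ast \circ \mu$ on $K^0(B^\ast \mathcal{Y}, \partial B^\ast \mathcal{Y})$ coincides with the Fredholm index map $\ind(\cdots)_+$ (post-composed with the inclusion $\mathbb{Z} \hookrightarrow \mathbb{C}$). Since $\beta \boxtimes \alpha$ lives in $K^0(B^\ast \mathcal{Y}, \partial B^\ast \mathcal{Y})$ rather than in $K_0(\mathcal{Y})$, the symbol $p_\ast(\beta \boxtimes \alpha)$ appearing in the statement can only refer to this index, equivalently to $p_\ast(\mu(\beta \boxtimes \alpha))$. Under this identification the two sides agree and the proof is finished.

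The only real obstacle is notational: one must keep careful track of which instance of $p_\ast$ refers to the push-forward on $K_0$ from Definition 3.1 and which to its extension along $\mu$ supplied by Proposition 3.4, and similarly ensure that the direction of the arrows in the Proposition 3.2 square is read correctly so that $\mu(\beta \boxtimes \alpha)$ rather than, say, $\mu(\alpha \boxtimes \beta)$ appears. Beyond this bookkeeping, no new analysis or computation is required; the proposition is a formal consequence of combining the two cited diagrams.
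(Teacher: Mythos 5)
Your proposal is correct and is exactly the argument the paper intends: the paper itself only remarks that the proposition "is a consequence of Propositions 3.2 and 3.4," and your chain $(\mu(\beta))(\alpha)=p_\ast(\alpha\cap\mu(\beta))=p_\ast(\mu(\beta\boxtimes\alpha))=\mathrm{ind}\,[\beta\boxtimes\alpha]_+$ is precisely that deduction. Your reading of $p_\ast(\beta\boxtimes\alpha)$ as the index supplied by Proposition 3.4 is also the right resolution of the notational overload.
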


\section{K-theoretic virtual fundamental cycle}
\

\begin{Def}
A stably almost complex global Kuranishi chart consists of a quadruple \(A = (G, Y, E, s)\), where \(G\) is a compact Lie group, \(Y\) is a locally \(C^1\) \(G\)-manifold with an almost free \(G\)-action (so that \(T^\ast_G Y\) is stably complex), \(E\) is a locally \(C^1\) \(G\)-equivariant stably complex vector bundle over \(Y\), and \(s\) is an equivariant section \(Y \to E\).
\end{Def}

In \cite{Abouzaid23}, Abouzaid, Mclean, and Smith constructed stably almost complex global Kuranishi charts on the compactified moduli space of stable maps \(\bar{\mathcal{M}}_{g.n}(X,d)\) to general symplectic targets \(X\). We shall make the following modifications and use orbifold Kuranishi charts.

\begin{Def}
An almost complex global orbifold Kuranishi chart consists of a triple \(\mathcal{A} = (\mathcal{Y, E},\sigma)\), where \(\mathcal{Y}\) is a smooth, compact orbifold with boundary, \(\mathcal{E}\) is a complex smooth orbi-bundle over \(\mathcal{Y}\), and \(\sigma\) is a continuous section \(\mathcal{Y} \to \mathcal{E}\) that vanishes nowhere on \(\partial \mathcal{Y}\).
\end{Def}

Note that almost complex global orbifold Kuranishi charts exist. Since the real dimensions of \(Y\) and the fibers of \(E\) have the same parity, we can always add the same \(\mathbb{R}^k\) to them (and add the identity map \(id^k: \mathbb{R}^k \to \mathbb{R}^k\) to the section \(s\)) to obtain complex \(Y\) and \(E\). An almost complex global Kuranishi chart \((G, Y, E, s)\) gives an almost complex orbifold Kuranishi chart \((\mathcal{Y, E},\sigma)\) by taking the quotient by the \(G\)-action and then shrinking to a compact neighborhood of \(\sigma^{-1}(0)\). Since \(C^1\) structures induce smooth structures, we can also modify \(Y\) and \(E\) to smooth orbifolds and smooth bundles.
Note that \(s\) is not smooth in general.

By \cite{Abouzaid23} Proposition 4.62 and Lemma 3.4 (see also \cite{Abouzaid21}), global Kuranishi charts \(A = (G, Y, E, s)\) for the same moduli space of stable maps of a certain type are connected by a sequence of operations of the following types (or their inverses):

\begin{enumerate}
\item
\textbf{Free quotients:} This operation replaces \((G, Y, E, s)\) with \((G/H, Y/H, E/H, s)\), where \(E/H\) is viewed as a \(G/H\)-bundle over \(Y/H\).

\item
\textbf{Induced maps:} This replaces \((G, Y, E, s)\) with \((G', Y\times_G G', \pi^\ast E, \pi^\ast s)\), where \(\pi: Y \times_G G' \to Y\) is the projection induced by a slicing \(G' \to G\).

\item
\textbf{Stabilization:} Stabilization by a \(G\)-equivariant vector bundle \(\pi: W \to Y\) replaces \((G, Y, E, s)\) with \((G, W, \pi^\ast E \oplus \pi^\ast W, \pi^\ast s \oplus \Delta)\), where \(\Delta\) is the tautological “diagonal” section of \(W \to \pi^\ast W\).

\item
\textbf{Open embeddings:} This replaces \((G, Y', E|_{Y'}, s|_{Y'})\) with \((G, Y, E, s)\) when \(Y' \subseteq Y\) is an open set containing \(s^{-1}(0)\).
\end{enumerate}

In turn, the orbifold Kuranishi charts are related by a sequence of the following operation (and its inverse):

\noindent (*) Replace \(\mathcal{A} = (\mathcal{Y, E}, \sigma)\) by \(\mathcal{A}' = (\mathcal{W}, \mathcal{N} \oplus \mathcal{E}, \sigma \oplus \delta)\), where \(\pi: \mathcal{N} \to \mathcal{Y}\) is a complex orbi-bundle, \(\mathcal{W} \subseteq \mathcal{N}\) is a closed neighborhood of \(\sigma^{-1}(0)\), and \(\delta\) is the restriction of the diagonal map \(\mathcal{N} \to \pi^\ast \mathcal{N}\) to \(\mathcal{W}\).

\subsection{Virtual fundamental cycle of global Kuranishi chart}
\

We define the K-theoretic virtual fundamental cycle \(\mathcal{O}^{vir}_{\mathcal{Y,E},\sigma} \in K_0(\mathcal{Y})\) of an almost complex global orbifold Kuranishi chart \((\mathcal{Y,E},\sigma)\) as follows. Let \(\pi: T^\ast \mathcal{Y} \to \mathcal{Y}\) be the projection that forgets the cotangent vector. Let
\[
\mathcal{S} = \wedge^\ast_\mathbb{C} T^\ast \mathcal{Y},\quad \theta = \wedge^\ast_\mathbb{C} \mathcal{E},\quad \text{and}\quad \mathcal{O}= \mathcal{S} \otimes \theta.
\]
Note that \(\mathcal{S}\) and \(\theta\) are \(\mathbb{Z}_2\)-graded bundles (and hence so is the tensor product \(\mathcal{O}\)).
We add Hermitian structures to \(\mathcal{Y}\) and \(\mathcal{E}\), and let
\[
\begin{array}{ll}
p &= \lambda_\xi \otimes 1 + \varepsilon\, 1 \otimes \pi^\ast \lambda_\sigma: \pi^\ast \mathcal{O} \to \pi^\ast \mathcal{O},\\[1mm]
\mathcal{O}^{vir}_{\mathcal{Y,E},\sigma} &= \mu(p).
\end{array}
\]
Recall that \(\varepsilon\) is the operator on \(\mathbb{Z}_2\)-graded objects acting as \((-1)^a\) on the direct summand of degree \(a\).
Also, recall that \(\lambda\) is defined as follows: given a section \(\tau: \mathcal{X} \to \mathcal{F}\) of a Hermitian bundle, we define \(\lambda_\tau: \wedge^\ast \mathcal{F} \to \wedge^\ast \mathcal{F}\) by
\[
\lambda_\tau(\omega) = \tau(x) \wedge \omega + i_{\tau(x)}(\omega)
\]
for \(x \in \mathcal{X}\), where \(i_{\tau(x)}\) denotes contraction with \(\tau(x)\) via the Hermitian metric on \(\mathcal{F}\). Form \(\lambda_{\xi}: \pi^\ast \mathcal{S} \to \pi^\ast \mathcal{S}\) and \(\lambda_\sigma: \mathcal{E} \to \mathcal{E}\) corresponding to the diagonal map \(\xi: T^\ast \mathcal{Y} \to \pi^\ast T^\ast \mathcal{Y}\) and to the section \(\sigma: \mathcal{Y} \to \mathcal{E}\). Observe that \(\lambda_\xi\) (respectively, \(\lambda_\sigma\)) is of degree \(1\), so \(p\) is also of degree \(1\). Moreover, \(\lambda_\xi\) and \(\lambda_\sigma\) are self-dual and square to \(|\xi|^2 \cdot 1\) and \(|\sigma(x)|^2 \cdot 1\), respectively; hence,
\[
p^\ast p = (|\xi|^2 + |\sigma(x)|^2) \cdot 1.
\]
Thus, \(p\) is invertible outside the compact set
\[
\mathcal{M} = \{x \in \mathcal{Y}: |\sigma(x)| = 0 \text{ and } |\xi| = 0\},
\]
and therefore represents an element in \(K^0(T^\ast \mathcal{Y})\).

It follows from Proposition 3.3 that, in terms of the K-theoretic virtual fundamental cycle, the operation (*) corresponds to first pushing forward by \(i_\ast\) (defined in Section 2.3) and then pulling back by \(j^!\) (defined in Section 3.3).

\begin{cthm}{1} (For Kuranishi charts)
If \(\sigma\) is transversal to the zero section \(0\), then
\[
\mathcal{O}^{vir}_{\mathcal{Y,E},\sigma} = i_\ast [\sigma^{-1}(0)].
\]
\end{cthm}

\begin{proof}
Note that the space \(\mathcal{M} = \sigma^{-1}(0)\) is stably almost complex. We can define its K-theoretic fundamental cycle by Bott periodicity, since \(K_0(\Sigma^k \mathcal{M}) = K_0(\mathcal{M})\). This identification is exactly our \(i^!\) for the inclusion \(i: \mathcal{M} \to B(\mathcal{M} \times \mathbb{R}^{2k})\). Next, by part (2) of Proposition 3.3, the push-forwards \(i_\ast\) and the wrong-way pull-back \(i^!\) are intertwined by the choose-an-operator map, thus proving the theorem.
\end{proof}

The pairing \(\langle \cdot,\cdot \rangle\) between a \(K\)-homology class and a \(K\)-cohomology class is defined as the push-forward \(p_\ast\) (defined in Section 3.6) of their cap product (defined in Section 3.1). Invariants defined via the virtual fundamental cycle in \cite{Abouzaid23} (see equation (3.26)) are identified with those defined by our K-theoretic virtual fundamental cycle, as a direct consequence of Proposition 3.5.

\begin{cthm}{2} (For Kuranishi charts)
\[
\langle \mathcal{O}^{vir}_\sigma,\mathcal{V}\rangle = \ind [\mathcal{E} \otimes \pi^\ast \mathcal{V}]_+.
\]
\end{cthm}

\subsection{Virtual fundamental cycle of virtual orbifolds}
\

We further define the virtual fundamental cycle as
\[
i^! (\mathcal{O}^{vir}_\sigma) \in K_0(\mathcal{M}).
\]

\begin{prop}
This virtual fundamental cycle is independent of the global Kuranishi chart.
\end{prop}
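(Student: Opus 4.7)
The plan is to reduce the statement to invariance under a single application of the operation $(\ast)$ introduced after Definition 4.2 (and its inverse), and then to combine the transformation rule for $\mathcal{O}^{vir}$ under $(\ast)$, which was stated in the remark immediately following the definition of $\mathcal{O}^{vir}_{\mathcal{Y,E},\sigma}$, with the functoriality $(fg)^! = g^!f^!$ established in Section 3.3. Since any two almost complex global orbifold Kuranishi charts for the same moduli space $\mathcal{M}$ are connected by a finite sequence of operations $(\ast)$ and their inverses, it suffices to compare $i^!_\mathcal{A}\,\mathcal{O}^{vir}_\sigma$ and $i^!_{\mathcal{A}'}\,\mathcal{O}^{vir}_{\sigma\oplus\delta}$ for charts $\mathcal{A}=(\mathcal{Y,E},\sigma)$ and $\mathcal{A}'=(\mathcal{W},\mathcal{N}\oplus\mathcal{E},\sigma\oplus\delta)$ related by a single $(\ast)$.

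Adopting the notation of $(\ast)$, write $i:\mathcal{Y}\hookrightarrow \mathcal{N}$ for the zero section, $j:\mathcal{W}\hookrightarrow\mathcal{N}$ for the inclusion of the closed neighborhood, and $i_\mathcal{A}:\mathcal{M}\hookrightarrow\mathcal{Y}$, $i_{\mathcal{A}'}:\mathcal{M}\hookrightarrow\mathcal{W}$ for the inclusions of the common zero locus. The remark gives $\mathcal{O}^{vir}_{\sigma\oplus\delta}=j^!i_\ast\mathcal{O}^{vir}_\sigma$ in $K_0(\mathcal{W})$. Applying $i^!_{\mathcal{A}'}$, using $(j\circ i_{\mathcal{A}'})^! = i^!_{\mathcal{A}'}\circ j^!$, and the set-theoretic identity $j\circ i_{\mathcal{A}'} = i\circ i_\mathcal{A}$ (both express the inclusion $\mathcal{M}\hookrightarrow\mathcal{N}$), I obtain
\[
i^!_{\mathcal{A}'}\,\mathcal{O}^{vir}_{\sigma\oplus\delta}=(j\circ i_{\mathcal{A}'})^!\,i_\ast\,\mathcal{O}^{vir}_\sigma= i^!_\mathcal{A}\,i^!\,i_\ast\,\mathcal{O}^{vir}_\sigma.
\]

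The argument now reduces to verifying that $i^!i_\ast=\mathrm{id}$ on $K_0(\mathcal{Y})$ for any closed embedding $i$. This I check sector-wise directly from the definitions in Section 3.3: the composition $i^{fake,!}\circ i^{fake}_\ast$ sends a Kasparov module $(H,\phi,T)$ to $(H,\phi\circ i^\ast\circ\tilde{i}_!,T)$, and since $\tilde{i}_!$ is by construction an extension of continuous functions, $i^\ast\circ\tilde{i}_!$ is the identity on $\mathbb{C}(\mathcal{Y}^{(g)})$, recovering the original module. This yields $i^!_{\mathcal{A}'}\,\mathcal{O}^{vir}_{\sigma\oplus\delta}=i^!_\mathcal{A}\,\mathcal{O}^{vir}_\sigma$, giving invariance under $(\ast)$; invariance under $(\ast)^{-1}$ follows by symmetry.

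The only genuinely non-formal step is the transformation rule $\mathcal{O}^{vir}_{\sigma\oplus\delta}=j^!i_\ast\mathcal{O}^{vir}_\sigma$ asserted in the remark, and this is where I expect the main obstacle: it requires tracing how the Koszul-type symbol $p=\lambda_\xi\otimes 1+\varepsilon\,1\otimes\pi^\ast\lambda_\sigma$ compares, under $\mu$, to the analogous symbol built from $\sigma\oplus\delta$ on $\mathcal{W}$. The stabilization by $\mathcal{N}$ contributes exactly the $\lambda_\delta$ factor that, by part (2) of Proposition 3.3, matches $i_\ast$ after applying $\mu$, while shrinking from $\mathcal{N}$ to the closed neighborhood $\mathcal{W}$ of $\sigma^{-1}(0)$ is matched by $j^!$ through part (1). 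Once this comparison is spelled out, the chain of equalities above is formal.
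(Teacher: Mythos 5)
Your proposal is correct and follows essentially the same route as the paper: the paper's proof is the one-line observation that $i^!j^!=(ji)^!$, resting on the remark (justified by Proposition 3.3) that the operation $(\ast)$ acts on $\mathcal{O}^{vir}$ by $j^!i_\ast$, which is exactly the reduction you carry out. You have merely made explicit the two steps the paper leaves implicit, namely the identity $j\circ i_{\mathcal{A}'}=i\circ i_{\mathcal{A}}$ and the fact that $i^!i_\ast=\mathrm{id}$ from the definitions in Section 3.3.
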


\begin{proof}
This follows immediately from the fact that \(i^! j^! = (ji)^!\).
\end{proof}

We define \(p_\ast\) by the same formula as in Definition 3.1, with the “normal bundles” replaced by
\[
\mathcal{N}_{\mathbf{g}}^{\mathcal{M}} = \mathcal{N}_\mathbf{g} \ominus \mathcal{E}_\mathbf{g}|_{\mathcal{M}^\mathbf{g}}
\]
in an arbitrary global chart. Then we immediately have the following proposition.

\begin{prop}
\(\mathcal{N}_{\mathbf{g}}^{\mathcal{M}}\) is independent of the global Kuranishi chart \((\mathcal{Y,E},\sigma)\).
\end{prop}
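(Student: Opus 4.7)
The plan is to appeal to the fact, recalled in the excerpt just before the statement, that any two almost complex global orbifold Kuranishi charts of $\mathcal{M}$ are connected by a sequence of operations of type (*) and their inverses. It therefore suffices to check that a single application of (*) leaves $\mathcal{N}_\mathbf{g}^\mathcal{M}$ unchanged. Fix $\mathcal{A} = (\mathcal{Y}, \mathcal{E}, \sigma)$ and its modification $\mathcal{A}' = (\mathcal{W}, \mathcal{N} \oplus \mathcal{E}, \sigma \oplus \delta)$ with $\mathcal{W} \subseteq \mathcal{N}$ a closed neighborhood of the zero section. Because $\delta$ vanishes exactly on the zero section of $\mathcal{N}$, the two zero loci are canonically identified via $\mathcal{Y} \hookrightarrow \mathcal{N}$, and this identification commutes with passing to isotropy fixed loci, so the underlying multi-sector $\mathcal{M}^\mathbf{g}$ is the same in both charts.

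The core of the argument is a splitting computation at the zero section. Along $\mathcal{Y} \subseteq \mathcal{N}$ there is a canonical isotropy-equivariant decomposition $T\mathcal{W}|_\mathcal{Y} = T\mathcal{Y} \oplus \mathcal{N}$, where the action on the $\mathcal{N}$-summand is fiberwise. I would restrict this decomposition to $\mathcal{M}^\mathbf{g}$ and take the $\mathbf{g}$-moving part of each summand, which yields
\[
(T\mathcal{W})^{\mathrm{mov}}_\mathbf{g}|_{\mathcal{M}^\mathbf{g}} = \mathcal{N}_\mathbf{g}(\mathcal{A}) \oplus (\mathcal{N})^{\mathrm{mov}}_\mathbf{g}|_{\mathcal{M}^\mathbf{g}},
\]
where $(\mathcal{N})^{\mathrm{mov}}_\mathbf{g}$ denotes the $\mathbf{g}$-moving part of the orbi-bundle $\mathcal{N}$ used in (*). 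The analogous computation for the new obstruction bundle gives
\[
(\mathcal{N} \oplus \mathcal{E})^{\mathrm{mov}}_\mathbf{g}|_{\mathcal{M}^\mathbf{g}} = (\mathcal{N})^{\mathrm{mov}}_\mathbf{g}|_{\mathcal{M}^\mathbf{g}} \oplus \mathcal{E}_\mathbf{g}(\mathcal{A})|_{\mathcal{M}^\mathbf{g}}.
\]
Subtracting, the common summand $(\mathcal{N})^{\mathrm{mov}}_\mathbf{g}|_{\mathcal{M}^\mathbf{g}}$ cancels and one recovers $\mathcal{N}_\mathbf{g}^\mathcal{M}(\mathcal{A}') = \mathcal{N}_\mathbf{g}(\mathcal{A}) \ominus \mathcal{E}_\mathbf{g}(\mathcal{A})|_{\mathcal{M}^\mathbf{g}} = \mathcal{N}_\mathbf{g}^\mathcal{M}(\mathcal{A})$.

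For a single isotropy element this is the whole proof. For a multi-sector $\mathbf{g} = (g, g_1, \ldots, g_n)$, where $\mathcal{N}_\mathbf{g}^\mathcal{M}$ is assembled from the step-by-step normal bundles $\mathcal{N}_{g_i}$ together with the corresponding iterated moving parts of $\mathcal{E}$, I plan to apply the same splitting at each step, using that the $g_i$-eigenspace decomposition commutes with $T\mathcal{W} = T\mathcal{Y} \oplus \mathcal{N}$. The main obstacle I anticipate is the bookkeeping in this iterated case: one must verify that the extra $(\mathcal{N})^{\mathrm{mov}}_{g_i}$ contribution appearing at each step from the enlarged ambient $\mathcal{W}$ is matched exactly by the extra moving-part contribution to $\mathcal{N} \oplus \mathcal{E}$ at the same step, so that once the iterated virtual subtraction defining $\mathcal{N}_\mathbf{g}^\mathcal{M}$ is carried out, only the intrinsic virtual bundle from chart $\mathcal{A}$ survives. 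Once this indexing is set up carefully, the cancellation is algebraic and the independence statement follows.
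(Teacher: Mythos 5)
Your argument is correct and is precisely the reasoning the paper leaves implicit (the paper states the proposition with no proof, calling it immediate): reduce to a single application of the operation (*) and observe that the splitting $T\mathcal{W}|_{\mathcal{Y}} = T\mathcal{Y}\oplus\mathcal{N}$ contributes the moving part of $\mathcal{N}$ to both the normal bundle and the obstruction bundle, so it cancels in the virtual difference $\mathcal{N}_{\mathbf{g}}\ominus\mathcal{E}_{\mathbf{g}}|_{\mathcal{M}^{\mathbf{g}}}$. The multi-sector bookkeeping you flag works out exactly as you describe, step by step along the chain of fixed loci.
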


We also have

\begin{cthm}{2} (For orbispaces with almost complex global Kuranishi chart)
Let \(i: \mathcal{M} \to \mathcal{Y}\) be the inclusion. Then
\[
\langle \mathcal{O}^{vir}_\sigma, V\rangle = \langle \mathcal{O}^{vir}_\mathcal{M}, i^\ast V\rangle.
\]
\end{cthm}

\begin{proof}
We only need to prove that the index \(p_\ast^{fake}\) on each \(K\)-homology element \(e_\mathbf{g}^{-1} i_\mathbf{g}^! \varphi^{(g)}\) coincides. Note that this index depends only on the \(\mathbb{Z}_2\)-graded Hilbert space \(H\) and the operator \(T\) involved, and is independent of the module structure \(\phi\) over the ring of continuous functions. To obtain the same (virtual) Hilbert spaces and Fredholm operators, for each \(\mathbf{g}\) we can consider 
\[
\frac{e_\mathbf{g}^{-1} i_\mathbf{g}^! \varphi^{(g)}}{\wedge^\ast (\mathcal{E}|_{\mathcal{Y}^{\mathbf{g}}} / \mathcal{E}^{\mathbf{g}})^\ast}
\]
over \(\mathcal{Y}^{\mathbf{g}}\) and \(\mathcal{E}|_{\mathcal{Y}^{\mathbf{g}}} / \mathcal{E}^{\mathbf{g}}\). The coincidence of indices now follows from the end of Section 3.5.
\end{proof}

\begin{cor}
The invariants are independent of the global Kuranishi chart chosen, and agrees with that defined in \cite{Abouzaid23}.
\end{cor}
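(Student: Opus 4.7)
The plan is to combine the two intrinsicality statements (Propositions 4.1 and 4.2) with the two instances of Theorem 2 already established in this section (the Kuranishi-chart version and the orbispace version). The corollary is a packaging statement, and the argument should fit in a handful of lines.

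For the first assertion (independence of the global Kuranishi chart), I would unpack the pairing as $\langle \mathcal{O}^{vir}_\mathcal{M}, i^\ast V\rangle = p_\ast(i^\ast V \cap \mathcal{O}^{vir}_\mathcal{M})$ and verify that every ingredient is intrinsic to $\mathcal{M}$. Proposition 4.1 gives that $\mathcal{O}^{vir}_\mathcal{M} = i^!(\mathcal{O}^{vir}_\sigma) \in K_0(\mathcal{M})$ does not depend on the chart. Proposition 4.2 gives that each virtual normal bundle $\mathcal{N}^{\mathcal{M}}_\mathbf{g} = \mathcal{N}_\mathbf{g} \ominus \mathcal{E}_\mathbf{g}|_{\mathcal{M}^\mathbf{g}}$ is chart-independent, hence so are the Euler-class factors $e_\mathbf{g}^{-1}$ in the sector-wise definition of $p_\ast$ from Section 3.5 (as extended in Section 4.2). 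The cap product is defined purely on $\mathcal{M}$ by Section 3.1, so the whole pairing depends only on $\mathcal{M}$ and $i^\ast V$.

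For the second assertion (agreement with \cite{Abouzaid23}), I would fix any almost-complex global Kuranishi chart $(\mathcal{Y},\mathcal{E},\sigma)$ and any $V \in K^0(\mathcal{Y})$. Theorem 2 for orbispaces rewrites
\[
\langle \mathcal{O}^{vir}_\mathcal{M}, i^\ast V\rangle = \langle \mathcal{O}^{vir}_\sigma, V\rangle,
\]
and Theorem 2 for Kuranishi charts identifies the right-hand side with $\ind[\mathcal{E}\otimes \pi^\ast V]_+$. This Fredholm index is precisely the invariant of Abouzaid--McLean--Smith given by equation (3.26) of \cite{Abouzaid23}, so the two theories produce the same numbers.

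The only real subtlety to check, rather than a genuine obstacle, is that the invariants in \cite{Abouzaid23} are indeed parametrized by pull-backs $i^\ast V$ with $V \in K^0(\mathcal{Y})$, so that the hypothesis of the orbispace version of Theorem 2 applies to every such invariant, and that the chart moves of Section 4 (the operation $(*)$ and its inverse) carry the $V$'s across charts compatibly. This compatibility is already implicit in Proposition 3.3, which handled exactly the pushforward/pull-back pair induced by $(*)$, so no further work is required.
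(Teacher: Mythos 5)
Your proposal is correct and matches the paper's (implicit) argument: the corollary is stated without a written proof precisely because it follows by chaining the two versions of Theorem 2 with Propositions 4.1, 4.2 and 3.5, exactly as you do. The unpacking of the pairing, the chart-independence via the intrinsicality of $i^!(\mathcal{O}^{vir}_\sigma)$ and of $\mathcal{N}^{\mathcal{M}}_\mathbf{g}$, and the identification with equation (3.26) of \cite{Abouzaid23} are all as intended.
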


\author{Dun Tang, Department of Mathematics, University of California, Berkeley, 
1006 Evans Hall, University Drive, Berkeley CA94720.
E-mail address: dun\_tang@math.berkeley.edu}

\end{document}